\newcommand{\rmd}{\mathrm{d}}
\newcommand{\rmi}{\mathrm{i}}
\newcommand{\I}{\mathcal{I}}
\newcommand{\J}{\mathcal{J}}
\newcommand{\K}{\mathcal{K}}
\newcommand{\M}{\phantom{-}}
\newcommand{\kFr}{\mathfrak{k}}
\newcommand{\lFr}{\mathfrak{l}}
\newcommand{\mFr}{\mathfrak{m}}
\newcommand{\nFr}{\mathfrak{n}}
\DeclareMathOperator{\modR}{mod}
\newcommand{\Jac}{\mathrm{Jac}}
\DeclareMathOperator{\rank}{rank}
\DeclareMathOperator{\Complex}{\mathbb{C}}
\DeclareMathOperator{\Integer}{\mathbb{Z}}
\DeclareMathOperator{\wgt}{wgt}
\newtheorem{lemma}{Lemma}
\newtheorem{teo}{Theorem}
\newtheorem{prop}{Proposition}
\newtheorem{cor}{Corollary}
\newtheorem{conj}{Conjecture}
\theoremstyle{definition}
\newtheorem{rem}{Remark}
\newtheorem{exam}{Example}
\theoremstyle{plain}
\title[General derivative Thomae formula]
{General derivative Thomae formula \\ for singular half-periods}
\author{J Bernatska}
\address{}
\email{jbernatska@gmail.com, bernatska.julia@ukma.edu.ua}
\date{\today}
\begin{document}
 
\maketitle 
\begin{abstract}
The paper develops second Thomae theorem in hyperelliptic case. 
The main formula, called general Thomae formula, provides expressions for
values at zero of the lowest non-vanishing derivatives of theta functions 
with singular characteristics of arbitrary multiplicity in terms of branch points and period matrix. 
We call these values derivative theta constants.
First and second Thomae formulas follow as particular cases.

Some further results are derived. 
Matrices of second derivative theta constants 
(Hessian matrices of zero-values of theta functions with characteristics
of multiplicity two) have rank three in any genus.
Similar result about the structure of order three tensor of third derivative theta constants
is obtained, and a conjecture regarding higher multiplicities is made.
As a byproduct a generalization of Bolza formulas are deduced.
\end{abstract}

MSC2010: 14K25, 32A15, 32G15, 14H15

\section{Introduction}
Thomae formulas are of great interest in many areas of mathematics and physics such as 
quantum field theory, string theory, theory of integrable systems, number theory, $p$-adic analysis etc.
This paper provides a development of the classical work of Thomae \cite{Tho870}.
First and second Thomae formulas give a representation of theta
constants with non-singular even and odd characteristics in terms of branch points and period matrix
of a hyperelliptic Riemann surface.
In the present paper a similar representation of derivative theta constants with singular characteristics 
is found for all possible multiplicities.
This problem was not considered in mathematical literature since the time of Thomae.

Instead, generalizations of Thomae formulas to the case of $\Integer_N$-curves, 
also called cyclic covers of $\mathbb{CP}^{1}$ or simply cyclic curves,  were discovered. 
This was initiated by paper \cite{BR1988}, where a generalization of first Thomae formula
provided an expression for determinant of Dirac's operator in terms of branch points of $\Integer_N$-curve.
This gave rise to a flow of publications. 
In \cite{Nak1997} a rigorous proof of the mathematical result from \cite{BR1988} was given.  
Then first Thomae formula was generalized to a special class of singular $\Integer_N$-curve in \cite{EG2006}, 
to general cyclic covers of $\mathbb{CP}^{1}$ in \cite{Kop2010}, 
to Abelian covers of $\mathbb{CP}^{1}$ in \cite{KZ2019}, 
and developed in other papers and a book of H.\;Farkas and Sh.\;Zemel 
Generalization of Thomae's Formula for $\Integer_N$ Curves (2010) containing many examples.
A detailed generalization of first Thomae formula for a trigonal cyclic curve with a specific choice of 
symplectic cohomology basis is given in \cite{MT2010}.
The only generalization of second Thomae formula was obtained in \cite{EKZ} for trigonal cyclic curves.

This paper is organized as follows. Section 2 contains the minimal background, definitions and notation
regarding theta and sigma functions, Thomae theorems and some auxiliary lemmas. 
Section 3 is devoted to the main theorem (Theorem~\ref{T:ThN}), which generalizes second Thomae theorem,  
with a detailed proof. In Section 4 corollaries of the main theorem are presented with examples in genera $3$, $4$, $5$ and~$6$.
Corollaries~\ref{C:thomae2I}, \ref{C:thomaeK34}, \ref{C:thomaeK56} provide some further representations of first, 
second and third derivative theta constants. This allows to establish some essential results. In particular, the rank of matrices of 
second derivative theta constants (Hessian matrices) is three in any genus (Theorem~\ref{T:DetD2theta}).
Also a generalization of Bolza formulas \cite{Bolza} for branch points and their symmetric fuctions is obtained.
Section 5 contains a summary of our results and links to related problems posed in the literature.

\section{Preliminaries}
\subsection{Hyperelliptic curve}
A hyperelliptic curve $\mathcal{C}$ is described by its branch points $\{(e_j,0)\}$, and defined by the equation
\begin{gather}\label{Cg}
 0 = f(x,y) = -y^2 + \prod_{j=1}^{2g+1} (x-e_j).
\end{gather}
The branch points are all distinct if a curve is non-degenerate. 
One more branch point is located at infinity which serves as a base point.

Homology basis is defined after H.\,Baker \cite[p.\,303]{bak898}.
One can imagine a continuous path through all branch points, which ends at infinity.
The branch points are denoted by $\{e_j\}_{j=1}^{2g+1}$ along the path, infinity is denoted by $e_{2g+2}$.
Cuts are made between points $e_{2k-1}$ and $e_{2k}$ with $k$ from $1$ to $g+1$. 
Canonical homology cycles are defined as follows.
Each $\mathfrak{a}_k$-cycle encircles the cut $(e_{2k-1},e_{2k})$, $k=1$, \ldots $g$,
and each $\mathfrak{b}_k$-cycle enters two cuts $(e_{2k-1},e_{2k})$ and $(e_{2g+1},e_{2g+2})$,
see fig.~\ref{cycles}.
\begin{figure}[h]
\includegraphics[width=0.9\textwidth]{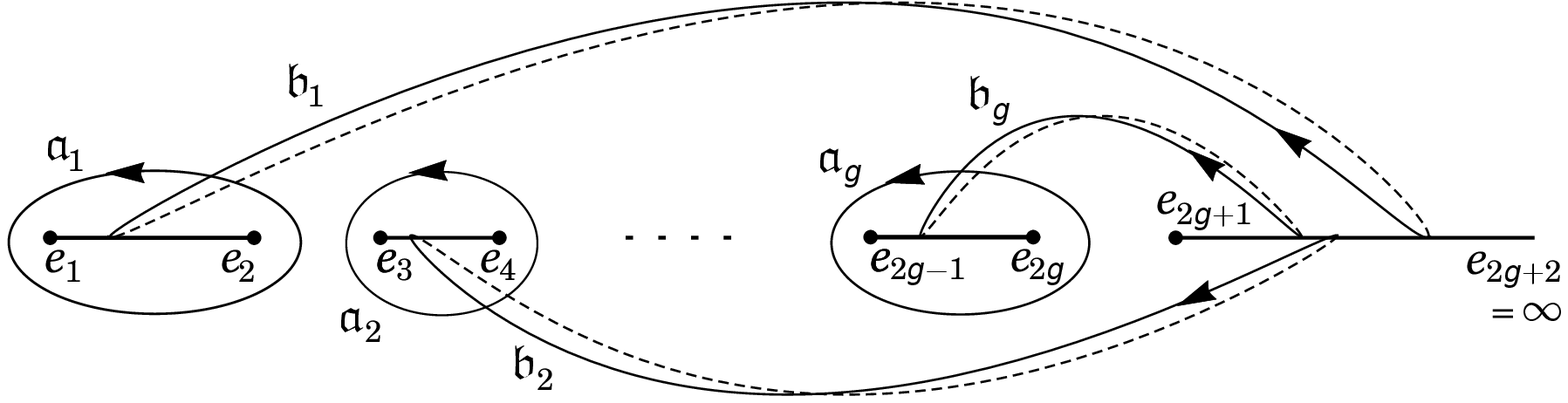}
\caption{} \label{cycles}
\end{figure}

The standard cohomology basis is employed, it consists of 
first kind differentials $\rmd u = (\rmd u_1$, $\rmd u_3$, $\dots$, $\rmd u_{2g-1})^t$
and second kind differentials $\rmd r = (\rmd r_1$, $\rmd r_3$, $\dots$, $\rmd r_{2g-1})^t$ 
associated to the first kind differentials, see for example \cite[p.\,306]{bak898},
\begin{align}
&\rmd u_{2n-1} = \frac{x^{g-n} \rmd x}{\partial_y f},\qquad n=1,\,\dots,\,g,  
\label{HDifCg}\\
&\rmd r_{2n-1} = \frac{\rmd x}{\partial_y f} \sum_{k=1}^{2n-1} k \lambda_{4n-2k+2} x^{g-n+k},
\qquad n=1,\,\dots,\,g.  \label{MDifCg}
\end{align} 
Here $\lambda_l$ denote coefficients of the curve \eqref{Cg} written in the form
\begin{gather*}
 f(x,y)=-y^2 + x^{2g+1} + \sum_{i=0}^{2g} \lambda_{4g+2-2i} x^i.
\end{gather*}
The differentials are labeled by Sat\={o} weights in subscripts, for convenience. The weight shows
an exponent of the leading term in expansion of the corresponding integral 
about infinity in parameter $\xi$, $x=\xi^{-2}$, namely $\wgt u_{2n-1} = 2n-1$, and $\wgt r_{2n-1} = -(2n-1)$.

Integrals of the differentials along the canonical homology cycles give
first and second kind periods
\begin{align*}
 &\omega = (\omega_{ij})= \bigg( \int_{\mathfrak{a}_j} \rmd u_i\bigg),&
 &\omega' = (\omega'_{ij}) = \bigg(\int_{\mathfrak{b}_j} \rmd u_i \bigg),& \\
 &\eta = (\eta_{ij}) = \bigg(\int_{\mathfrak{a}_j} \rmd r_i \bigg),&
 &\eta' = (\eta'_{ij}) = \bigg(\int_{\mathfrak{b}_j} \rmd r_i \bigg).&
\end{align*}
The $g\times g$ matrices $\omega,\omega', \eta, \eta'$ form $2g\times 2g$ matrix
\begin{gather*}
 \Omega = \begin{pmatrix} \omega & \omega' \\
           \eta & \eta'  \end{pmatrix},
\end{gather*}
which is symplectic with respect to a complex structure $J$,
$J^2=-1_{2g}$, where $1_{2g}$ is the identity matrix of size $2g$, and $J^t=-J$,
\begin{gather}\label{LegendreIdnt}
\Omega J \Omega^t = 2\pi \rmi J.
\end{gather}
Relation \eqref{LegendreIdnt} is known as the generalized Legendre relation, see for example \cite[p.\,16]{bel12}. 
With complex structure matrix $J$ of the form
\begin{gather*}
J = \left( \begin{array}{cc}  \M 0_g & 1_g \\ -1_g & 0_g
\end{array}  \right),
\end{gather*}
where $1_g$ is the identity matrix of size $g$, and $0_g$ is $g\times g$ zero matrix.
\eqref{LegendreIdnt} produces relations
\begin{align}\label{LegRel}
 &\begin{array}{l}\omega \omega'{}^t = \omega' \omega^t,\\
 \eta \eta'{}^t = \eta' \eta^t, \\
 \eta \omega'{}^t = \eta' \omega^t - 2\pi \rmi 1_g, \end{array} & &\text{or}&
 &\begin{array}{l} \omega^t \eta = \eta^t \omega,\\
 \omega'{}^t \eta' = \eta'{}^t \omega', \\
 \omega'{}^t\eta = \eta'{}^t \omega - 2\pi \rmi 1_g.
 \end{array} &
\end{align}

\subsection{Theta and sigma functions}
Each curve of the family $\mathcal{C}$ has a Jacobian variety $\Jac(\mathcal{C})=\Complex^g\backslash \mathfrak{P}$,
which is a quotient space of $\Complex^g$ by the lattice $\mathfrak{P}$ 
of periods formed by columns of the matrix $(\omega,\omega')$.
Let $u$ with coordinates $(u_1,$ $u_3$, \ldots, $u_{2g-1})^t$ denote a point of Jacobian.
These are variables of sigma function, which is defined here with the help of theta function. 

Theta function is an entire function on $\Complex^g$ 
defined with respect to normalized periods $(1_g,\tau)$, where $\tau = \omega^{-1}\omega'$
is a symmetric matrix with positive imaginary part: $\tau^t=\tau$, $\Im \tau >0$,
that is $\tau$ belongs to Siegel upper half-space. 
Normalized holomorphic differentials are
\begin{gather*}
 \rmd v = \omega^{-1} \rmd u,
\end{gather*}
and similarly normalized coordinates of the Jacobian are defined: $v=\omega^{-1} u$,
$v=(v_1,v_2,\dots,v_g)^t$. This change of coordinates is essential for the relation \eqref{SigmaDef} between
theta and sigma functions.
Riemann theta function is defined for $v\in \Complex^g$ as a Fourier series of the form
\begin{gather}\label{ThetaDef}
 \theta(v;\tau) = \sum_{n\in \Integer^g} \exp \big(\rmi \pi n^t \tau n + 2\rmi \pi n^t v\big).
\end{gather}
Abel's map $\mathcal{A}$ maps the curve to its Jacobian
\begin{gather}\label{AbelM}
 \Jac(\mathcal{C}) \ni \mathcal{A}(P) = \int_{\infty}^P \rmd v,\qquad P\in \mathcal{C}.
\end{gather}
Abel's map of a positive divisor $\mathcal{D} = \sum_{i=1}^n P_i$ 
on $\mathcal{C}$ is defined by
\begin{gather}\label{AbelMD}
 \mathcal{A}(\mathcal{D}) = \sum_{i=1}^n 
 \int_{\infty}^{P_i} \rmd v.
\end{gather}

Each branch point $(e,0)$ of a hyperelliptic curve \eqref{Cg} is identified with a half-period,
see \cite[\S\,202 p.\,300-301]{bak897}
\begin{gather}\label{AbelMBranchP}
 \mathcal{A}(e) = \int_{\infty}^{(e,0)} \rmd v = \varepsilon/2 + \tau \varepsilon'/2,
 \qquad \begin{bmatrix} \varepsilon'{}^t \\ 
\varepsilon^t \end{bmatrix} = [\varepsilon],
\end{gather}
where components of $\varepsilon$ and $\varepsilon'$ are $0$ or $1$.
The integer $2\times g$-matrix $[\varepsilon]$ is the characteristic of branch point $e$.
Characteristics are added by the rule $[\varepsilon]+[\delta] = ([\varepsilon]+[\delta]) \modR 2$.
Theta function with characteristic $[\varepsilon]$ is given by the formula
\begin{multline}\label{ThetaDef}
 \theta[\varepsilon](v;\tau) = \exp\big(\rmi \pi (\varepsilon'{}^t/2) \tau (\varepsilon'/2)
 + 2\rmi \pi  (v+\varepsilon/2)^t \varepsilon'/2\big) \times \\ \times \theta(v+\varepsilon/2 + \tau \varepsilon'/2;\tau).
\end{multline}
A characteristic $[\varepsilon]$ is odd whenever $\varepsilon^t \varepsilon' \modR 2 = 0$, 
and even whenever $\varepsilon^t \varepsilon' \modR 2 = 1$. Theta function with characteristic
has the same parity as its characteristic.

Another entire function, sigma function, is connected to theta function by relation
\begin{gather}\label{SigmaDef}
 \sigma(u) = \frac{1}{C} \exp\Big({-}\frac{1}{2} u^t \varkappa u \Big) \theta[K](\omega^{-1} u; \omega^{-1}\omega'),
\end{gather}
where $\varkappa = \eta \omega^{-1}$ is a symmetric matrix defined through period matrices $\omega$ and $\eta$,
and $[K]$ denotes the characteristic of vector of Riemann constants.
This formula arises as a definition of fundamental sigma functions in \cite[p.\,97]{bak907} without constant $C$.
In \cite{Gra1988} this constant was found by methods developed in \cite{bak907,bak898}. 
Also the constant is explicitly defined in \cite[Eq.\,(3.29) p.\,906]{EHKKL2011},  
which is proved in \cite[p.\,33]{bel12}.

\subsection{Characteristics in hyperelliptic case}\label{ss:CharHyper}
The method of constructing characteristics in hyperelliptic case is adopted from \cite[p.\,1012]{ER2008}.
It is based on the definition \eqref{AbelMBranchP} of half-period characteristics.
Let $[\varepsilon_k]$ be the characteristic of branch point $e_k$.
Evidently, $[\varepsilon_{2g+2}]=0$. Guided by the picture of canonical homology cycles, one can find
\begin{align*}
 &\mathcal{A}(e_{2g+1}) = \mathcal{A}(e_{2g+2}) + \sum_{k=1}^g \int_{e_{2k-1}}^{e_{2k}} \rmd v &
 &[\varepsilon_{2g+1}] = \big[ {}^{00\dots 00}_{11\dots 11} \big], & \\
 &\mathcal{A}(e_{2g}) = \mathcal{A}(e_{2g+1}) - \int_{e_{2g}}^{e_{2g+1}} \rmd v &
 &[\varepsilon_{2g}] = \big[ {}^{00\dots 01}_{11\dots 11} \big], & \\
 &\mathcal{A}(e_{2g-1}) = \mathcal{A}(e_{2g}) - \int^{e_{2g}}_{e_{2g-1}} \rmd v&
 &[\varepsilon_{2g+1}] = \big[ {}^{00\dots 01}_{11\dots 10} \big], &
\intertext{for $k$ from $g-1$ to $2$}
 &\mathcal{A}(e_{2k}) = \mathcal{A}(e_{2k+1}) - \int_{e_{2k}}^{e_{2k+1}} \rmd v &
 &[\varepsilon_{2k}] = \big[ \overbrace{{}^{00\dots 0}_{11\dots 1}}^{k-1}\!{}^{10 \dots 0}_{10 \dots 0} \big], & \\
 &\mathcal{A}(e_{2k-1}) = \mathcal{A}(e_{2k}) - \int^{e_{2k}}_{e_{2k-1}} \rmd v&
 &[\varepsilon_{2k-1}] = \big[ \overbrace{{}^{00\dots 0}_{11\dots 1}}^{k-1}\!{}^{10 \dots 0}_{00 \dots 0} \big], &
\intertext{and finally}
 &\mathcal{A}(e_{2}) = \mathcal{A}(e_{3}) - \int_{e_{2}}^{e_{3}} \rmd v &
 &[\varepsilon_{2}] = \big[ {}^{10\dots 0}_{10\dots 0} \big], & \\
 &\mathcal{A}(e_{1}) = \mathcal{A}(e_{2}) - \int^{e_{2}}_{e_{1}} \rmd v &
 &[\varepsilon_{1}] = \big[ {}^{10\dots 0}_{00\dots 0} \big]. &
 \end{align*}
This set of characteristics is azygetic and serves as a fundamental system, see \cite[pp.\,181--184]{RF1974}.

Characteristic $[K]$ of the vector of Riemann constants $K$ equals 
the sum of all odd characteristics of branch points, there are $g$ such characteristics,
see \cite[\S\,200 p.\,297, \S\,202 p.\,301]{bak897}.
Actually,
\begin{gather*}
 [K] = \sum_{k=1}^g [\varepsilon_{2k}].
\end{gather*}

\subsection{Characteristics and partitions}
Let $\mathcal{I}\cup \J$  be a partition of the set of indices of all branch points $\{1,2,\dots, 2g+2\}$,
denote by $[\varepsilon(\I)] = \sum_{i\in\I} [\varepsilon_i]$ the characteristic of
\begin{gather*}
 \mathcal{A} (\I) = \sum_{i\in\I} \mathcal{A}(e_i) = \frac{1}{2} \varepsilon_\I  + \frac{1}{2} \tau \varepsilon'_\I.
\end{gather*}
Below a partition is often referred to by the part of less cardinality, denoted by $\I$.

Characteristics of $2g+2$ branch points of \eqref{Cg} serve as a basis for constructing
all $2^{2g}$ half-period characteristics.
According to \cite[p.\,13]{fay973} and \cite[\S\,202 p.\,301]{bak897} 
all half-period characteristics are represented by 
partitions of $2g+2$ indices of the form $\I_m\cup \J_m$ with $\I_m = \{i_1,\,\dots,\, i_{g+1-2m}\}$
and $\J_m = \{j_1,\,\dots,\, j_{g+1+2m}\}$, where $m$ runs from $0$ to $[(g+1)/2]$, and $[\cdot]$ means the integer part. 
Number $m$ is called \emph{multiplicity}.
Index $2g+2$ corresponding to infinity is usually omitted in the sets, and inferred in the part with an incomplete number of indices. 

Introduce also characteristic $[\I_m] = [\varepsilon(\I_m)] + [K]$ of
\begin{gather*}
 \sum_{i\in\I_m} \mathcal{A}(e_i) + K = \frac{1}{2}\delta_{\I_m} + \frac{1}{2}\tau \delta'_{\I_m},
\end{gather*}
which  corresponds to a partition $\I_m\cup \J_m$. 
Note that $[\J_m]$ represents the same characteristic as $[\I_m]$.
Characteristics $[\I_m]$ of even multiplicity $m$ are even, and of odd $m$ are odd.
According to Riemann theorem $\theta(v+\mathcal{A} (\I_m)+K)$ vanishes to order $m$ at $v=0$.
Characteristics of multiplicity $0$ are called \emph{non-singular even characteristics},
there are $\binom{2g+1}{g}$ such characteristics.
There exist $\binom{2g+2}{g-1}$ characteristics of multiplicity $1$, which are called \emph{non-singular odd}.
All other characteristics are called \emph{singular}.
The number of characteristics of multiplicity $m > 1$ is $\binom{2g+2}{g+1-2m}$.

Characteristic $[K]$ corresponds to the partition $\{\}\cup\{1,2,\dots,2g+1\}$, which is always unique, and
$\theta[K](v)$ vanishes to the maximal order $[(g+1)/2]$ at $v=0$.
In what follows, representation of characteristics in terms of partitions is preferable, 
because this makes clear which order of vanishing a theta function has at $v=0$.

Let a collection of branch points $\{e_i\mid i\in \mathcal{I}\}$ 
correspond to a partition $\I \cup \J$.
Then $s_n(\I)$ denotes an elementary symmetric polynomial of degree $n$ in  $\{e_i\mid i\in\I\}$,
and $\Delta(\I)$ denotes the Vandermonde determinant in branch points from the collection
\begin{gather*}
 \Delta(\I) = \prod_{\substack{i > l \\ i,l \in \I}} (e_i-e_l).
\end{gather*}
The Vandermonde determinant in all branch points of the curve is denoted by $\Delta$.

\begin{rem}
 Let branch points in all factors $(e_i-e_l)$ be ordered in such a way that $i>l$, 
 we call this \emph{right ordering}.
 This allows to avoid multiplier $\epsilon$, which arises in many relations. 
 Such ordering was suggested by Baker \cite[p.\,346]{bak898}.
\end{rem}

Theta function with characteristic $[\I]$ corresponds to sigma function at a half-period $\mathcal{A}(\I)$ 
as defined in \cite{EHKKL2011,EHKKLS}, indeed 
\begin{multline}\label{SigmaCh}
 \sigma(\omega \mathcal{A}(\I))  = \frac{1}{C} \exp\Big({-}\frac{1}{2} \mathcal{A}(\I)^t \omega^t \eta \mathcal{A}(\I) \Big) 
 \theta[\varepsilon_K]\big(\mathcal{A}(\I); \omega^{-1}\omega'\big) \\
 = \frac{1}{C} \exp \Big({-}\frac{1}{8} \big(\omega \varepsilon + \omega' \varepsilon'\big)
 \big(\eta \varepsilon + \eta' \varepsilon'\big)\Big) \theta[\I]\big(0; \omega^{-1}\omega'\big)
\end{multline}
where $\varepsilon$, $\varepsilon'$ denote $\varepsilon(\I)$, $\varepsilon'(\I)$, 
and Legendre relations \eqref{LegRel} are used
\begin{gather*}
\tau \omega^t \eta = \omega^{-1} \omega' \omega^t \eta =  \omega'{}^t \eta
= \eta'{}^t \omega - 2\pi \rmi 1_g.
\end{gather*}
One can notice that exponential factor in \eqref{SigmaCh} is $\exp\big({-}\frac{1}{8}\tilde{\varepsilon}^t \tilde{\varepsilon}'\big)$
with the characteristic $[\varepsilon]$ transformed by symplectic matrix $\Omega$
\begin{gather*}
 \begin{pmatrix} \tilde{\varepsilon} \\ \tilde{\varepsilon}' \end{pmatrix} =
 \Omega \begin{pmatrix} \varepsilon \\ \varepsilon' \end{pmatrix}.
\end{gather*}

\subsection{Notation of theta constants}
In what follows we use notation $\partial_{v_i}$ for derivative with respect to variable $v_i$, and 
omit argument $\tau$ of theta function, so 
\begin{align*}
 &\partial_{v_i} \theta[\varepsilon](v) = \frac{\partial}{\partial v_i}  \theta[\varepsilon](v;\tau),&\\
 &\partial^2_{v_i,v_j} \theta[\varepsilon](v) 
= \frac{\partial^2}{\partial v_i\partial v_j}  \theta[\varepsilon](v;\tau),&\\
&\text{etc.}&
\end{align*}
We also use the standard notation $\theta[\varepsilon]$ for theta constant with characteristic $[\varepsilon]$, 
or $\theta[\I_0]$ for one with the characteristic corresponding to partition $\I_0\cup\J_0$.
Thus
\begin{align*}
 &\theta[\I_0] = \theta[\I_0](0;\tau),&\\
 &\partial_{v_i} \theta[\I_1] = \frac{\partial}{\partial v_i}  \theta[\I_1](v;\tau)\big|_{v=0},&\\
 &\partial^2_{v_i,v_j} \theta[\I_2] 
 = \frac{\partial^2}{\partial v_i\partial v_j}  \theta[\I_2](v;\tau)\big|_{v=0}	,&\\
&\text{etc.}&
\end{align*}

In the literature we find values at $v=0$ of theta functions 
with non-singular even characteristics, which are called theta constants, 
or theta constants of the first kind \cite[p.1011]{ER2008}. 
Ibid values at $v=0$ of first derivatives of theta functions with non-singular odd characteristics
are called theta constants of the second kind. 
However, there is no consensual term for the latter, 
and no term at all for the values of higher derivatives of theta functions at $v=0$.

As mentioned above, $\theta[\I_m](v)$ vanishes to order $m$ at $v=0$.
We are interested in values at $v=0$ of the lowest non-vanishing derivatives of
theta functions with characteristics of arbitrary multiplicity $m$, 
that is $\partial_v^m \theta[\I_m]$. Here symbol $\partial_v^m$  denotes a tensor of order $m$ 
of partial $m$-th derivatives with respect to all combinations constructed from $g$ components of $v$.
We call them \emph{derivative theta constants}, or \emph{$m$-th derivative theta constants}
more precisely.

\subsection{First Thomae Formula}
\newtheorem*{FTteo}{First Thomae theorem}
\begin{FTteo}
 Let $\I_0\cup \J_0$ with $\I_0=\{i_1$, \ldots, $i_{g}\}$ 
 and $\J_0 = \{j_1,\,\dots,\,j_{g+1}\}$  be a partition
 of the set $\{1,2,\dots,2g+1\}$ of indices of finite branch points, and  $[\I_0]$ denotes 
 the non-singular even characteristic corresponding to $\mathcal{A} (\I_0) + K$.  Then
 \begin{gather}\label{thomae1}
 \theta[\I_0] = \epsilon \bigg(\frac{\det \omega}{\pi^g}\bigg)^{1/2} \Delta(\I_0)^{1/4} \Delta(\J_0)^{1/4}.
\end{gather}
where $\epsilon$ satisfies $\epsilon^8=1$,
and $\Delta(\I_0)$, $\Delta(\J_0)$ denote Vandermonde determinants 
built from $\{e_i\mid i\in \I_0\}$ and $\{e_j\mid j\in \J_0\}$.
\end{FTteo}
For a proof see \cite[Proposition\;3.6, p.46]{fay973}.
This form of first Thomae theorem is taken from \cite[p.\,1014]{ER2008}, 
as well as second Thomae theorem \cite[p.\,1015]{ER2008} below.

\subsection{Second Thomae Formula}
\newtheorem*{STteo}{Second Thomae theorem}
\begin{STteo}
 Let $\I_1\cup \J_1$ with $\I_1=\{i_1$, \ldots, $i_{g-1}\}$ and $\J_1 = \{j_1,\,\dots,\,j_{g+2}\}$ 
 be a partition of the set $\{1,2,\dots,2g+1\}$ of indices of finite branch points, and
 $[\I_1]$ denote the non-singular odd characteristic  
 corresponding to $\mathcal{A} (\I_1) + K$. Then for any $n \in \{1, \dots, g\}$
 \begin{multline}\label{thomae2}
 \frac{\partial }{\partial v_n} \theta[\I_1](v)\big|_{v=0} \\
 = \epsilon \bigg(\frac{\det \omega}{\pi^g}\bigg)^{1/2} \Delta(\I_1)^{1/4} \Delta(\J_1)^{1/4} 
 \sum_{j=1}^g  (-1)^{j-1} s_{j-1}(\I_1) \omega_{j,n}.
\end{multline}
where  $\epsilon$ satisfies $\epsilon^8=1$, and $\Delta(\I_1)$, $\Delta(\J_1)$ denote
Vandermonde determinants built from $\{e_i\mid i\in \I_1\}$ and $\{e_j\mid j\in \J_1\}$,
then $s_{j}(\I)$ denotes the elementary symmetric polynomial of degree $j$ in $\{e_i\mid i\in \I\}$.
\end{STteo}

This result is nicely presented in a matrix form 
\begin{multline}\label{thomae2MF}
 \begin{pmatrix} \partial_{v_1} \\ \partial_{v_2}  \\ \vdots \\ \partial_{v_g}  \end{pmatrix} 
 \theta[\I_1] (v)\big|_{v=0} \\
 = \epsilon \bigg(\frac{\det \omega}{\pi^g}\bigg)^{1/2} \Delta(\I_1)^{1/4} \Delta(\J_1)^{1/4}
 \omega^t  \begin{pmatrix} s_0(\I_1) \\ - s_1(\I_1) \\ \vdots \\ (-1)^{g-1} s_{g-1}(\I_1) \end{pmatrix}.
\end{multline}
In terms of non-normalized variables $u$ 
\begin{multline}\label{thomae2MFu}
 \begin{pmatrix} \partial_{u_1} \\ \partial_{u_3}  \\ \vdots \\ \partial_{u_{2g-1}}  \end{pmatrix}  
 \theta[\I_1] (\omega^{-1} u )\big|_{u=0} \\
 =  \epsilon \bigg(\frac{\det \omega}{\pi^g}\bigg)^{1/2} \Delta(\I_1)^{1/4} \Delta(\J_1)^{1/4}
 \begin{pmatrix} s_0(\I_1) \\ - s_1(\I_1) \\ \vdots \\ (-1)^{g-1} s_{g-1}(\I_1) \end{pmatrix}.
\end{multline}

Second Thomae theorem is derived from Lemma~\ref{L:thetaquotient}, for more detail see the proof invented by V.\,Enolski, published in \cite[pp.\,92--95]{Eilers2018} (Enolski supervised K.\,Eilers' research) and \cite[pp.\,2--4]{EKZ}.

Here the definition of elementary symmetric polynomials $s_l$
in $\{e_i\mid i\geqslant 0\}$ is  recalled
\begin{gather*}
\sum_{l \geqslant 0} t^l s_l = \prod_{i\geqslant 0} (1+e_i t).
\end{gather*}

\subsection{Auxiliary Lemmas }\label{Thomae2Proof}
The following Lemmas are essential in the proof of second Thomae formula. 
They are used below as well.
\begin{lemma}\label{L:thetaquotient}
Let $\mathcal{D}= \sum_{i=1}^g P_i$ be a divisor of $g$ finite points $\{P_i=(x_i,y_i)\}$ 
of a hyperelliptic curve of genus $g$,
and $[\varepsilon_k]$ denote the characteristic of a branch point $e_k$. Then 
 \begin{gather}\label{thetaquotient}
\frac{\theta[\varepsilon_k] \big(v(\mathcal{D}) + K\big)^2}{\theta \big(v(\mathcal{D}) + K\big)^2}
= \frac{\epsilon}{\sqrt{\partial_x f(e_k,0)}}\prod_{i=1}^g(e_k-x_i),
\end{gather}
where $\epsilon^4=1$, and $K$ denotes the Riemann constant vector related to the base point at infinity.
\end{lemma}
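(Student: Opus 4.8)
The plan is to prove the identity \eqref{thetaquotient} by combining two classical ingredients: Riemann's vanishing theorem (which tells us where each side has zeros and poles as a function on the Jacobian, or rather as a function of a moving point) and an explicit evaluation of the constant. First I would fix all but one of the points, say let $P_1 = P = (x,y)$ vary while $P_2,\dots,P_g$ stay fixed, and regard both sides of \eqref{thetaquotient} as functions of $P \in \mathcal{C}$. The left-hand side, being a ratio of squares of theta functions evaluated at shifted arguments, is a meromorphic function on $\mathcal{C}$ once we check that the automorphy factors picked up under the lattice $\mathfrak{P}$ cancel between numerator and denominator (they do, because $\theta[\varepsilon_k]$ and $\theta$ differ by translation by the half-period $\mathcal{A}(e_k)$, and squaring kills the remaining sign/root-of-unity ambiguity). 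So the left side descends to a genuine rational function of $x$.

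Next I would locate its divisor. By Riemann's theorem, $\theta(v(\mathcal{D})+K)$ vanishes (to first order, generically) exactly when $\mathcal{D}$ contains a point whose Abel image makes the argument special; shifting the divisor by $P$, the denominator $\theta(v(\mathcal D)+K)^2$ vanishes to order two precisely when $P$ is one of the conjugate points $\hat{P_i}=(x_i,-y_i)$, i.e.\ it contributes a double zero at each $x=x_i$. The numerator $\theta[\varepsilon_k](v(\mathcal D)+K)^2 = \theta(v(\mathcal D)+K+\mathcal{A}(e_k))^2$ vanishes to order two when the divisor $\mathcal D + (e_k)$ is special, which for a moving $P$ happens at $x=x_i$ again (same double zeros from $P_2,\dots,P_g$ interacting with $e_k$) plus a single simple zero at $x=e_k$ coming from the branch point itself; accounting for the behavior at infinity (the base point) one finds the ratio, as a rational function of $x$, has its only zeros at $x=e_k$ and is regular and nonzero elsewhere, hence equals $\mathrm{const}\cdot\prod_{i=1}^g(e_k-x_i)$ up to the dependence on the fixed points — and a symmetry/degree count in the $x_i$ forces exactly the product $\prod_{i=1}^g(e_k-x_i)$ with a constant independent of the divisor.

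Finally I would pin down the constant. The cleanest route is to translate \eqref{thetaquotient} into sigma-function language via \eqref{SigmaDef}: the left-hand side becomes a ratio of squares of $\sigma$ evaluated at $\omega(v(\mathcal D))+\omega\mathcal A(e_k)$ and at $\omega v(\mathcal D)$, where the exponential prefactors $\exp(-\tfrac12 u^t\varkappa u)$ and the constant $C$ combine neatly because of the Legendre relations \eqref{LegRel}; then I would invoke the known expansion of $\sigma$ near a branch point together with first Thomae formula \eqref{thomae1} applied in a degenerate limit (or, equivalently, let $P_2,\dots,P_g$ collapse onto branch points so the divisor hits a non-singular even half-period) to read off that the remaining scalar is a fourth root of unity, giving $\epsilon$ with $\epsilon^4=1$, and simultaneously producing the factor $1/\sqrt{\partial_x f(e_k,0)}$ from the local parameter $x-e_k = \xi^2$ at the branch point. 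The main obstacle I expect is bookkeeping of the automorphy factors and the precise power of the root of unity: one must be careful that squaring both theta functions removes the $\exp$-factor ambiguity only up to $\pm1$, and that the branch-point shift contributes a controlled phase; this is exactly the kind of computation done in the references \cite[pp.\,92--95]{Eilers2018} and \cite[pp.\,2--4]{EKZ} for second Thomae, so I would follow that template, checking each half-period shift against the characteristic arithmetic $[\varepsilon]+[\delta]=([\varepsilon]+[\delta])\bmod 2$.
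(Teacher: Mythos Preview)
The paper does not supply its own proof of Lemma~\ref{L:thetaquotient}; immediately after the statement it simply refers the reader to \cite[pp.\,92--95]{Eilers2018} and \cite[pp.\,2--4]{EKZ}. So there is nothing in the paper to compare your argument against line by line.

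That said, your outline is the standard route and is in the spirit of the cited sources: show that the ratio of squared thetas, viewed as a function of one moving point on $\mathcal{C}$, is single-valued (automorphy cancels after squaring), has the same divisor as $\prod_i (e_k-x_i)$, and then fix the proportionality constant by a degeneration to branch points combined with first Thomae formula. Your own remark that you would ``follow that template'' from \cite{Eilers2018,EKZ} is exactly right; what you have written is essentially a paraphrase of the argument recorded there.

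Two small points of bookkeeping you should tighten if you actually write this out. First, in your divisor count for the numerator you say ``a single simple zero at $x=e_k$''; be precise that at a branch point the local parameter on the curve is $\xi$ with $x-e_k=\xi^2$, so a simple zero in $x$ is a double zero on $\mathcal{C}$ --- this is what matches the square in the numerator and is also where the factor $1/\sqrt{\partial_x f(e_k,0)}$ comes from. Second, your description of the zeros of the denominator (``double zero at each $x=x_i$'') glosses over the role of the base point at infinity; make sure the $g$ zeros of $\theta(\mathcal{A}(P)+c)$ on the curve are accounted for correctly. Neither issue is a genuine gap, just places where the sketch would need one more sentence to be airtight.
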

Here and below $v(\mathcal{D})$ denotes the argument of theta functions
which is considered as a function of a divisor $\mathcal{D}$.

A proof can be found in \cite[pp.\,92--95]{Eilers2018} or \cite[pp.\,2--4]{EKZ}.

\begin{lemma}\label{L:InvJac}
The Jacobian matrix $\frac{\partial(x_1,x_2,\dots,x_g)}{\partial(v_1,v_2,\dots,v_g)}$ 
 of Abel's map $\mathcal{A}: \mathcal{C}^g \mapsto \Jac(\mathcal{C})$ has the following entries
\begin{gather*}
 \frac{\partial x_p}{\partial v_n} = 
  -2y_p \frac{\sum_{j=1}^g(-1)^{j-1} s_{j-1}^{(p)} \omega_{jn}}{\prod_{\substack{l\neq p \\ l=1}}^g (x_p-x_l)},
\end{gather*}
where $p$ and $n$ run from $1$ to $g$, and $x_p$ represents a points $P_p = (x_p,y_p)$ of the curve.
\end{lemma}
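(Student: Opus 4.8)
The plan is to compute the Jacobian matrix of Abel's map directly, then invert it. First I would write down the differential of the Abel map restricted to the symmetric product $\mathcal{C}^g$. For a divisor $\mathcal{D}=\sum_{p=1}^g P_p$ with $P_p=(x_p,y_p)$, formula \eqref{AbelMD} together with the explicit form \eqref{HDifCg} of the holomorphic differentials $\rmd u_{2n-1}=x^{g-n}\rmd x/\partial_y f$ and the normalization $\rmd v=\omega^{-1}\rmd u$ gives
\begin{gather*}
 \frac{\partial v_n}{\partial x_p}
 = \sum_{j=1}^g (\omega^{-1})_{nj}\,\frac{x_p^{g-j}}{\partial_y f(x_p,y_p)}
 = \frac{1}{-2y_p}\sum_{j=1}^g (\omega^{-1})_{nj}\, x_p^{g-j},
\end{gather*}
using $\partial_y f=-2y$. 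So the matrix $\partial(v)/\partial(x)$ factors as $\omega^{-1}$ times the ``Vandermonde-type'' matrix $V$ with entries $V_{jp}=x_p^{g-j}/(-2y_p)$; the goal is then to show that its inverse has the stated entries, i.e.\ that $\partial(x)/\partial(v)=V^{-1}\omega$ has $(p,n)$-entry $-2y_p\bigl(\sum_{j=1}^g(-1)^{j-1}s_{j-1}^{(p)}\omega_{jn}\bigr)\big/\prod_{l\neq p}(x_p-x_l)$.

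The key step is therefore the inversion of the scaled Vandermonde matrix $V$. Pulling out the diagonal factor $\diag(-2y_p)^{-1}$, what remains is the classical Vandermonde matrix $W$ with $W_{jp}=x_p^{g-j}$, whose inverse is given by the standard Lagrange-interpolation formula: $(W^{-1})_{pj}$ is (up to sign conventions) the coefficient of $x^{g-j}$ in $\prod_{l\neq p}(x-x_l)\big/\prod_{l\neq p}(x_p-x_l)$. Expanding $\prod_{l\neq p}(x-x_l)=\sum_{k=0}^{g-1}(-1)^k s_k^{(p)} x^{g-1-k}$, where $s_k^{(p)}$ is the $k$-th elementary symmetric polynomial in $\{x_l : l\neq p\}$ (the paper's notation $s_{j-1}^{(p)}$), one reads off that $(W^{-1})_{pj}$ equals $(-1)^{j-1}s_{j-1}^{(p)}\big/\prod_{l\neq p}(x_p-x_l)$. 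Then $V^{-1}=W^{-1}\diag(-2y_p)$ has $(p,j)$-entry $-2y_p(-1)^{j-1}s_{j-1}^{(p)}/\prod_{l\neq p}(x_p-x_l)$, and finally $\partial(x)/\partial(v)=V^{-1}\omega$ yields
\begin{gather*}
 \frac{\partial x_p}{\partial v_n}
 = \sum_{j=1}^g (V^{-1})_{pj}\,\omega_{jn}
 = -2y_p\,\frac{\sum_{j=1}^g(-1)^{j-1}s_{j-1}^{(p)}\omega_{jn}}{\prod_{l\neq p, l=1}^g(x_p-x_l)},
\end{gather*}
which is the claimed formula.

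The main obstacle I expect is purely bookkeeping: getting the signs and the index ordering in the Lagrange-interpolation inverse exactly consistent with the paper's convention for labelling the rows of $\omega$ by Sat\={o} weights (so that $\omega_{jn}$ indeed pairs with the coefficient of $x^{g-j}$ from $\rmd u_{2j-1}=x^{g-j}\rmd x/\partial_y f$), and checking that the factor $-2y_p$ from $\partial_y f$ lands in the numerator rather than being absorbed elsewhere. None of this is conceptually hard; it is just a matter of tracking the two Vandermonde-structured matrices carefully and invoking the standard Vandermonde-inverse identity. One should also note implicitly that the inversion is legitimate generically, i.e.\ for a divisor $\mathcal{D}$ of $g$ distinct finite non-branch points, where all $y_p\neq 0$ and all $x_p$ are pairwise distinct, so that $\det V\neq 0$ and Abel's map is a local diffeomorphism there.
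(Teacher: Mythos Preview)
Your proposal is correct and follows essentially the same route as the paper: write the forward Jacobian $\partial(v)/\partial(x)=\omega^{-1}V$ with $V_{jp}=x_p^{g-j}/(-2y_p)$, then invert the Vandermonde-type matrix via the Lagrange interpolation identity (the paper phrases this as introducing the polynomials $\mathcal{P}_p(x)=\prod_{l\neq p}(x-x_l)/(x_p-x_l)$ and using $\mathcal{P}_p(x_k)=\delta_{pk}$, which is exactly the same computation). One small slip: since the $-2y_p$ factor sits on the column index of $V$, the correct factorization is $V^{-1}=\diag(-2y_p)\,W^{-1}$, not $W^{-1}\diag(-2y_p)$; your stated $(p,j)$-entry is nonetheless correct, so this is only a notational hiccup of precisely the bookkeeping sort you anticipated.
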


\begin{proof}
Introduce
\begin{align}
 &\mathcal{P}_p(x) = \prod_{\substack{l\neq p \\ l=1}}^g \frac{x-x_l}{x_p-x_l} 
 = \frac{\sum_{j=1}^{g} (-1)^{j-1} s_{j-1}^{(p)} x^{g-j} }{\prod_{\substack{l\neq p \\ l=1}}^g (x_p-x_l)},
 \quad p=1,\dots,g, \label{Pjdef}
\end{align} 
where $s_{l}^{(p)}$  denotes the elementary symmetric polynomial of order $l$ built from elements 
$\{x_1,\dots,\,x_g\}\backslash\{x_p\}$. Evidently,
\begin{gather*}
 \mathcal{P}_p(x_k) = \delta_{pk}.
\end{gather*}

Taking into account that $v=\omega^{-1} u$, where $u$ is defined by \eqref{HDifCg},
we find the inverse to Jacobian matrix 
$\frac{\partial (x_1,\dots x_g)}{\partial (u_1,\dots,u_{2g-1})} = J$ in non-normalized variables
\begin{gather*}
 J^{-1} = \begin{pmatrix} \frac{x_1^{g-1}}{-2y_1} & \frac{x_2^{g-1}}{-2y_2} & \cdots & \frac{x_g^{g-1}}{-2y_g}\\ 
 \vdots & \vdots & \ddots & \vdots \\ \frac{x_1}{-2y_1} & \frac{x_2}{-2y_2} & \cdots & \frac{x_g}{-2y_g} \\
 \frac{1}{-2y_1} & \frac{1}{-2y_2} & \cdots & \frac{1}{-2y_g} \end{pmatrix}.
\end{gather*}
This implies
\begin{gather*}
 \sum_{j=1}^g J_{pj} \frac{x_k^{g-j}}{-2y_k} = \delta_{pk}.
\end{gather*}
Comparing the last equation with \eqref{Pjdef}, we obtain
\begin{gather*}
 J_{pj} = \frac{-2y_p(-1)^{j-1} s_{j-1}^{(p)} }{\prod_{\substack{l\neq p \\ l=1}}^g (x_p-x_l)}.
\end{gather*}
Thus,
\begin{gather*}
 \frac{\partial x_p}{\partial v_n} = \sum_{j=1}^g J_{pj}\omega_{jn} = 
  -2y_p \frac{\sum_{j=1}^g(-1)^{j-1} s_{j-1}^{(p)} \omega_{jn}}{\prod_{\substack{l\neq p \\ l=1}}^g (x_p-x_l)}.
\end{gather*}
\end{proof}

\subsection{Verification}
All formulas and relations given in the paper are verified 
by direct computation of left and right hand sides.
Curves with real branch points are used.
Period matrices $\omega$, $\omega'$, $\eta$, $\eta'$ are computed explicitly, 
as well as matrices $\tau$ and $\varkappa$ for each curve.
Hyperelliptic curves of genera $3$, $4$, $5$, and $6$, 
and theta functions with characteristics up to multiplicity~$3$ are taken.

\section{Main Theorem}
\begin{teo}[General Thomae formula]\label{T:ThN}
 Let $\I_\mFr\cup \J_\mFr$ with $\I_\mFr = \{i_1$, \ldots, 
 $i_{g+1-2\mFr}\}$ and $\J_\mFr = \{j_1,\,\dots,\,j_{g+1+2\mFr}\}$ 
 be a partition of the set $\{1,2,\dots,2g+2\}$ of indices of branch points of hyperelliptic curve, and
 $[\I_\mFr]$ denote the singular characteristic of multiplicity $\mFr$ 
 corresponding to $\mathcal{A} (\I_\mFr) + K$. 
 Then for any $n_1, \dots, n_m \in \{1, ..., g\}$ and any set $\K \subset \J_\mFr$ of cardinality
 $\kFr = 2\mFr-1$ or $2\mFr$ the following relation holds
 \begin{multline}\label{thomaeN}
 \frac{\partial }{\partial v_{n_1}} \cdots 
 \frac{\partial }{\partial v_{n_\mFr}} \theta[\I_\mFr](v)\big|_{v=0} 
 = \epsilon \bigg(\frac{\det \omega}{\pi^g}\bigg)^{1/2} \Delta(\I_\mFr)^{1/4} \Delta(\J_\mFr)^{1/4} \times \\ \times
  \sum_{\substack{p_1,\dots,p_\mFr \in \K \\ \text{all different}}}
 \prod_{i=1}^\mFr \frac{\sum_{j=1}^g  (-1)^{j-1} s_{j-1}(\I_\mFr \cup \K^{(p_i)})\omega_{jn_i}}
 {\prod_{k\in\K \backslash\{p_1,\dots,p_\mFr \}} (e_{p_i} - e_k)}.
\end{multline}
where $\K^{(p_i)}=\K\backslash \{p_i\}$,
$\epsilon$ satisfies $\epsilon^8=1$, then $\Delta(\I_\mFr)$, $\Delta(\J_\mFr)$ denote Vandermonde determinants 
 built from $\{e_i\mid i\in \I_\mFr\}$ and $\{e_j\mid j\in \J_\mFr\}$, 
 and $s_{j}(\I)$ denotes the elementary symmetric polynomial of degree $j$ in $\{e_i\mid i\in \I\}$. 
 The relation does not depend on the choice of $\K$.
\end{teo}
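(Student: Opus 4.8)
The plan is to generalize the Enolski–Eilers–Kalla–Zemel argument for the second Thomae formula by iterating it $\mFr$ times. The starting point is Lemma~\ref{L:thetaquotient}, which expresses a ratio of squares of theta constants with a single branch-point characteristic in terms of branch points. First I would choose a divisor $\mathcal{D}=\sum_{i=1}^g P_i$ whose Abel image, shifted by $K$, equals the singular half-period $\mathcal{A}(\I_\mFr)+K$ in the limit where $2\mFr$ (or $2\mFr-1$) of the points $P_i$ collide with the branch points indexed by $\K$. Concretely, I would let $g+1-2\mFr$ of the points sit (movably) somewhere, and let the remaining $2\mFr-1$ or $2\mFr$ points tend to the branch points $\{e_k\mid k\in\K\}$, so that $v(\mathcal{D})+K\to\mathcal{A}(\I_\mFr)+K$. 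On the left side of the theta quotient we then get $\theta[\I_\mFr](v(\mathcal{D})+K)$, which vanishes to order $\mFr$, so one must Taylor-expand and track the leading term, which is exactly the order-$\mFr$ derivative tensor $\partial_v^\mFr\theta[\I_\mFr]$ contracted against the displacement vectors of the colliding points.

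The key computational step is to control those displacement vectors. As the point $P_{p}$ approaches the branch point $e_p$, one has $x_p-e_p = O(\text{parameter})$ while $y_p = O(\sqrt{x_p-e_p})$ vanishes, and Lemma~\ref{L:InvJac} gives the Jacobian $\partial x_p/\partial v_n$ with the factor $-2y_p$ in the numerator; hence the variation of $v$ induced by the collision of $P_p$ with $e_p$ is, to leading order, controlled by the vector with components $-2y_p\sum_j(-1)^{j-1}s_{j-1}^{(p)}\omega_{jn}$ divided by $\prod_{l\ne p}(x_p-x_l)$. In the collision limit the denominator becomes $\prod_{k\in\K\setminus\{p_1,\dots,p_\mFr\}}(e_p-e_k)$ times the product over the non-colliding points (which cancels against a corresponding factor produced on the right side and in the normalization), and $s_{j-1}^{(p)}$ becomes $s_{j-1}$ of the index set of the non-colliding colliding-candidates together with the other branch points of $\I_\mFr$, i.e.\ $s_{j-1}(\I_\mFr\cup\K^{(p)})$. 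Summing over which of the $\kFr$ points in $\K$ are assigned to which derivative direction $n_i$ produces the symmetric sum over $p_1,\dots,p_\mFr\in\K$ all different. On the right side of the theta quotient, the factors $\theta(v(\mathcal{D})+K)^2$ and $\sqrt{\partial_x f(e_k,0)}=\sqrt{\prod_{j\ne k}(e_k-e_j)}$ accumulated over the $\kFr$ branch points combine, after using first Thomae formula \eqref{thomae1} to evaluate the non-singular even theta constant appearing in the denominator, into the prefactor $\epsilon(\det\omega/\pi^g)^{1/2}\Delta(\I_\mFr)^{1/4}\Delta(\J_\mFr)^{1/4}$; this bookkeeping of Vandermonde determinants under the partition $\I_\mFr\cup\J_\mFr$ is routine but must be done carefully, using $\Delta=\Delta(\I)\Delta(\J)\prod_{i\in\I,j\in\J}(e_i-e_j)$ up to sign.

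The main obstacle I anticipate is twofold. First, making the limiting/degeneration argument rigorous: one must verify that the chosen divisor path actually realizes the half-period $\mathcal{A}(\I_\mFr)+K$ (this is where the structure of the partition and the explicit branch-point characteristics of \S\ref{ss:CharHyper} enter), and that the leading-order Taylor coefficient one extracts is genuinely $\partial_v^\mFr\theta[\I_\mFr]$ and not killed by a lower-order obstruction — Riemann's vanishing theorem guarantees order exactly $\mFr$, so this should go through, but the multilinear algebra of extracting a symmetric tensor from a product of $\mFr$ near-collisions needs care. Second, proving independence of the choice of $\K$ (either a $(2\mFr-1)$-subset or a $2\mFr$-subset of $\J_\mFr$): I would argue that any two admissible choices are related by swapping one branch point of $\K$ for another element of $\J_\mFr$, and that the resulting change in the right-hand side vanishes by an identity among the elementary symmetric polynomials $s_{j-1}(\I_\mFr\cup\K^{(p_i)})$ — equivalently, by recognizing the inner sum $\sum_j(-1)^{j-1}s_{j-1}(\I_\mFr\cup\K^{(p_i)})\omega_{jn}$ as the value at $e_{p_i}$ of a fixed polynomial (a Lagrange-type interpolation object attached to $\I_\mFr$) so that only the evaluation points, not the ambient set $\K$, matter after the normalizing denominators are included. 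I would present the $\kFr=2\mFr$ case as the primary one and derive the $\kFr=2\mFr-1$ case by letting one more point run to the base point $e_{2g+2}=\infty$.
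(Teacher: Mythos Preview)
Your overall strategy---differentiate a theta-quotient identity, use Lemma~\ref{L:InvJac} for the inverse Jacobian, evaluate at a divisor of branch points, and reduce the prefactor via first Thomae---matches the paper's. But there is a genuine gap at the very first step: Lemma~\ref{L:thetaquotient} alone does not give you access to $\theta[\I_\mFr]$. That lemma controls $\theta[\varepsilon_k](v+K)^2/\theta(v+K)^2$ for a \emph{single} branch-point characteristic $[\varepsilon_k]$, and no amount of iterating it (or taking products over $k\in\K$) produces the quotient with the \emph{summed} characteristic $[\varepsilon(\K)]=\sum_{\kappa\in\K}[\varepsilon_\kappa]$, which is what becomes $[\I_\mFr]$ after evaluating at $v(\mathcal{D})=\mathcal{A}(\I_0)$ with $\I_0=\I_\mFr\cup\K$. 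Your collision/Taylor picture never explains how the function $\theta[\I_\mFr]$ enters the computation.

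The paper supplies exactly this missing ingredient: Baker's classical determinantal identity (Theta quotient theorem~\ref{T:thetaquotientMult}, from \cite[\S209]{bak897}) expresses
\[
\frac{\theta\big[\textstyle\sum_{\kappa\in\K}\varepsilon_\kappa\big](v(\mathcal{D})+K)\,\theta(v(\mathcal{D})+K)^{\kFr-1}}
{\prod_{\kappa\in\K}\theta[\varepsilon_\kappa](v(\mathcal{D})+K)}
\]
as an explicit $g\times g$ determinant in the divisor points. Combining this with Lemma~\ref{L:thetaquotient} yields a closed formula for $\theta[\varepsilon(\K)](v(\mathcal{D})+K)/\theta(v(\mathcal{D})+K)$ as a ratio $\Phi_\K/\Delta$ of determinants (Theta quotient theorem~\ref{T:thetaquotientK}). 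One then differentiates this ratio $\mFr$ times and evaluates at $\mathcal{D}=\sum_{i\in\I_0}(e_i,0)$. The reason the sum over distinct $p_1,\dots,p_\mFr\in\K$ emerges cleanly is the block structure of $\hat\Phi_\K$ at these branch points: it has a $\kFr\times[\kFr/2]$ block and a $(g-\kFr)\times(g-[\kFr/2])$ block, with zero off-diagonal blocks, so $\det\hat\Phi_\K$ and all its derivatives of order $<\mFr$ vanish, and the order-$\mFr$ derivative survives only when $\mFr$ distinct rows from $\K$ are differentiated (making both blocks square). This structural mechanism is what replaces your vaguer ``leading-order Taylor coefficient'' extraction, and it does not come out of Lemma~\ref{L:thetaquotient} by itself. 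Your remarks on independence of $\K$ and on the $\kFr=2\mFr-1$ versus $2\mFr$ cases are reasonable but secondary; the paper handles both values of $\kFr$ in parallel and leaves $\K$-independence implicit (the left side is manifestly $\K$-free).
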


\begin{rem}
 Formula \eqref{thomaeN}, in particular, gives first Thomae formula when $\mFr=0$, 
 and second Thomae formula when $\mFr=1$ and $\kFr = 2\mFr-1$.  That is why it is called general.
 However, the proof given below covers only the cases of $\kFr \geqslant 2$.  
\end{rem}

\begin{proof}[Proof of Theorem~\ref{T:ThN}]
We start with the known result from \cite[\S\,209, p.\;312]{bak897}
with factor  $\sqrt{\Delta(\K)}$ obtained in \cite[Sect.\,VII p.\,348--350]{bak898}.
\newtheorem{ThQteo}{Theta quotient theorem}
\begin{ThQteo}\label{T:thetaquotientMult}
Let $\mathcal{D}= \sum_{k=1}^g P_k$ be a non-special divisor of $g$ finite points $\{P_k=(x_k,y_k)\}$ of a hyperelliptic curve of genus $g$,
and $\K$ be a subset of $\{1,2, \dots, 2g+1\}$ of cardinality $\kFr$, 
and $[\varepsilon_\kappa]$ denote the characteristic of a branch point $e_\kappa$. Then 
\begin{multline}\label{thetaquotientMult}
\frac{\theta\big[\sum_{\kappa\in\K} \varepsilon_{\kappa}\big] \big(v(\mathcal{D}) + K\big)\theta 
\big(v(\mathcal{D}) + K\big)^{\kFr-1}}
{\prod_{\kappa\in \K} \theta [\varepsilon_{\kappa}] \big(v(\mathcal{D}) + K\big)} 
=  \frac{\epsilon \sqrt{\Delta(\K)}}{\Delta[x_1,\dots,x_g]} \times \\ \times
\det \bigg\|\frac{y_k x_k^\lFr}{\phi_{\K}(x_k)},
\frac{y_k x_k^{\lFr-1}}{\phi_{\K}(x_k)},\dots,\frac{y_k}{\phi_{\K}(x_k)},
x_k^\nFr,x_k^{\nFr-1},\dots,1\bigg\|_{k=1}^g,
\end{multline}
where $\epsilon^4=1$, then
$\lFr=[\kFr/2]-1$ and $\nFr=g-1-[\kFr/2]$ for $\kFr\geqslant 2$, 
here $[\cdot]$ denotes the integer part, and
$\phi_{\K}(x)=\prod_{\kappa \in \K} (x-e_\kappa) $.
\end{ThQteo}
As mentioned, formula \eqref{thetaquotientMult} works for $\kFr \geqslant 2$. If $\kFr=1$, 
which is the case of second Thomae formula, one should assign $\nFr=g$, and entries with $y_k$ are absent.
So \eqref{thetaquotientMult} turns into a trivial identity, and Lemma~\ref{L:thetaquotient} is used instead.

\begin{ThQteo}\label{T:thetaquotientK}
Under the assumptions of Theta quotient theorem~\ref{T:thetaquotientMult} the following relation holds
\begin{gather}\label{thetaquotientK}
\frac{\theta[\varepsilon(\K)] \big(v(\mathcal{D}) + K\big)}
{\theta \big(v(\mathcal{D}) + K\big)} 
= \frac{\epsilon}{\big(\prod_{\substack{\kappa \in \K\\ j \in \K^\ast}} (e_\kappa - e_j)\big)^{1/4}}
\frac{\Phi_{\K}[x_1,\dots,x_g] }{\Delta[x_1,\dots,x_g]},
\end{gather}
where $[\varepsilon(\K)]=\sum_{\kappa\in\K} [\varepsilon_{\kappa}]$, 
and $ \K^\ast$ denotes the complement set to $\K$ that is $\K \cup \K^\ast$ serves as 
a  partition of all $2g+1$ indices of finite branch points, 
and the following notation is used
\begin{subequations}
\begin{align}
&\Delta [x_1,\dots,x_g] = \det \hat{V},\qquad \hat{V} = \big\|x_k^{g-1},x_k^{g-2},\dots,x_k,1\big\|_{k=1}^g,\\
&\Phi_{\K}[x_1,\dots,x_g] = \det \hat{\Phi}_{\K},\\
&\hat{\Phi}_{\K} = \bigg\|x_k^\lFr \sqrt{\phi_{\K}^\ast(x_k)},
x_k^{\lFr-1} \sqrt{\phi_{\K}^\ast(x_k)}, \dots, \sqrt{\phi_{\K}^\ast(x_k)}, \label{PhiM} \\
&\qquad\qquad \notag x_k^\nFr \sqrt{\phi_{\K}(x_k)},x_k^{\nFr-1} \sqrt{\phi_{\K}(x_k)},
\dots,\sqrt{\phi_{\K}(x_k)}\bigg\|_{k=1}^g, 
\end{align} 
\end{subequations}
here $\phi_{\K}^\ast(x)= \prod_{j\in \K^\ast} (x-e_j)$.
\end{ThQteo}
\begin{proof} 
Combining Theta quotient theorem~\ref{T:thetaquotientMult} and Lemma~\ref{L:thetaquotient}  we find
\begin{multline*}
\frac{\theta[\varepsilon(\K)] \big(v(\mathcal{D}) + K\big)}
{\theta \big(v(\mathcal{D}) + K\big)} 
=  \frac{\epsilon \sqrt{\Delta(\K)}}{\big(\prod_{\kappa \in \K} \partial_x f(e_\kappa,0)\big)^{1/4}} \times \\ \times
\frac{\sqrt{\prod_{k=1}^g \phi(x_k)}}{\Delta[x_1,\dots,x_g]} 
\det \bigg\|\frac{y_k x_k^\lFr}{\phi_{\K}(x_k)},
\frac{y_k x_k^{\lFr-1}}{\phi_{\K}(x_k)},\dots,\frac{y_k}{\phi_{\K}(x_k)},
x_k^\nFr,x_k^{\nFr-1},\dots,1\bigg\|_{k=1}^g.
\end{multline*}
Then we take into account that 
\begin{gather*}
 \frac{\sqrt{\Delta(\K)}}{\big(\prod_{\kappa \in \K} \partial_x f(e_\kappa,0)\big)^{1/4}} 
 = \frac{1}{\big(\prod_{\substack{\kappa \in \K\\ j \in \K^\ast}} (e_\kappa - e_j)\big)^{1/4}},
\end{gather*}
and
\begin{gather*}
y_k = \bigg(\prod_{l=1}^{2g+1} (x_k-e_l) \bigg)^{1/2} = 
\sqrt{\phi_{\K}(x_k) \phi_{\K}^\ast(x_k)}. 
\end{gather*}
\end{proof}

Derivation of \eqref{thetaquotientK} leads to
 \begin{multline}\label{thetaquotientKD1}
  \frac{\partial}{\partial v_n} \frac{\theta [\varepsilon(\K)] \big(v(\mathcal{D})+ K\big)}
  {\theta \big(v(\mathcal{D})+ K\big)} 
 = \frac{\epsilon}{\big(\prod_{\substack{\kappa \in \K\\ j \in \K^\ast}} (e_i-e_j)\big)^{1/4}}
 \times \\ \times  \sum_{p=1}^g  \frac{\partial x_p}{\partial v_n} 
 \bigg(\frac{\Phi_{\K}^{(p)}}{\Delta} -  \Phi_{\K}\frac{\Delta^{(p)}}{\Delta^2}\bigg),
 \end{multline}
where $\Phi_{\K}$ and $\Delta$ are constructed with the set of points $\{x_1,\dots,x_g\}$ of divisor $\mathcal{D}$, 
and the notation $\Phi_{\K}^{(p)} = \partial \Phi_{\K}/\partial x_p$, and $\Delta^{(p)} = \partial \Delta/\partial x_p$ is adopted.
Since each row of $\hat{\Phi}_{\K}$ depends on a particular point $x_k$, derivative of $\det \hat{\Phi}_{\K}$ with repsect to $x_p$
is the determinant of matrix $\hat{\Phi}_{\K}^{(p)}$ with the same entries as in $\hat{\Phi}_{\K}$ 
except for $p$-th row which is replaced by its derivative. Namely, the $p$-th row has the form
\begin{multline}\label{DPhiRowp}
 \big(\hat{\Phi}_{\K}^{(p)}\big)_{p\cdot}
   = \frac{1}{2y_p}\bigg( x_p^\lFr \phi_{\K}^\ast{}'(x_p) \sqrt{\phi_{\K}(x_p)} 
 + 2 \lFr y_p x_p^{\lFr-1} \sqrt{\phi_{\K}^\ast{}(x_p)}, \\ \dots, 
  \phi_{\K}^\ast{}'(x_p) \sqrt{\phi_{\K}(x_p)},
 x_p^\nFr \phi_{\K}'(x_p) \sqrt{\phi_{\K}^\ast(x_p)} 
 + 2 \nFr y_p x_p^{\nFr-1} \sqrt{\phi_{\K}(x_p)},\\ \dots,
 \phi_{\K}'(x_p) \sqrt{\phi_{\K}^\ast(x_p)} \bigg).
\end{multline}
The same is true for $\hat{V}$.

Let $\I_0$ be a set of $g$ indices of finite branch points, and 
divisor $\mathcal{D}$ consists of branch points $\{(e_i,0)\}_{i\in \I_0}$,
namely: $v(\mathcal{D}) = \mathcal{A}(\I_0)$.
At the same time let $\K\subset \I_0$, that is 
$\{[\varepsilon_{\kappa}] \mid \kappa\in\K\}$ are characteristics of branch points from~$\mathcal{D}$.
We also denote $\I_0 \backslash \K = \I_{\mFr}$.
With these assumptions the left hand side of \eqref{thetaquotientK} equals
\begin{gather*}
\frac{\theta[\varepsilon(\K)] \big(\mathcal{A}(\I_0) + K\big)}
{\theta \big(\mathcal{A}(\I_0) + K\big)} 
= \frac{\theta[\varepsilon(\I_\mFr) + \varepsilon_K ] (0)}
{\theta[ \varepsilon(\I_0) + \varepsilon_K] (0)} 
\end{gather*}
and the numerator vanishes with order $\mFr=[(\kFr+1)/2]$.

In what follows we suppose $\mathcal{D} = \sum_{i\in \I_0} (e_i,0)$, 
and the points are arranged in the order $\K$, $\I_\mFr$. 
Because  $\phi_{\K}(e_\kappa) = 0$ for all $\kappa \in \K$, 
and $\phi_{\K}^\ast (e_\iota) = 0$ for all $\iota \in \I_{\mFr}$ since $\I_\mFr \subset \K^\ast$,
matrix $\hat{\Phi}_{\K}$ consists of four blocks, the right upper block of size 
$\kFr\times (g-[\kFr/2])$ and the left lower block of size $(g-\kFr)\times [\kFr/2]$
are zero. In more detail,
\begin{gather*}
 \begin{array}{cl} & \overbrace{}^{[\kFr/2]} \overbrace{}^{g-[\kFr/2]} \\
 \hat{\Phi}_{\K} =  &
\begin{pmatrix} \mathrm{B}_{\K} & 0 \\ 0 & \mathrm{B}_{\K}^\ast \end{pmatrix} 
\begin{array}{l} \}^\kFr \\  \}^{g-\kFr} \end{array}
 \end{array}
\end{gather*}
where
\begin{align*}
 &\mathrm{B}_{\K} = \begin{pmatrix}
  e_{\kappa_1}^\lFr \sqrt{\phi_{\K}^\ast(e_{\kappa_1})} & 
  e_{\kappa_1}^{\lFr-1} \sqrt{\phi_{\K}^\ast(e_{\kappa_1})} & \dots & \sqrt{\phi_{\K}^\ast(e_{\kappa_1})} \\
  e_{\kappa_2}^\lFr \sqrt{\phi_{\K}^\ast(e_{\kappa_2})} & 
  e_{\kappa_2}^{\lFr-1} \sqrt{\phi_{\K}^\ast(e_{\kappa_2})} & \dots & \sqrt{\phi_{\K}^\ast(e_{\kappa_2})} \\
  \vdots & \vdots & \ddots & \vdots \\
  e_{\kappa_\kFr}^\lFr \sqrt{\phi_{\K}^\ast(e_{\kappa_\kFr})} & 
  e_{\kappa_\kFr}^{\lFr-1} \sqrt{\phi_{\K}^\ast(e_{\kappa_\kFr})} & 
  \dots & \sqrt{\phi_{\K}^\ast(e_{\kappa_\kFr})} 
  \end{pmatrix},\\
 &\mathrm{B}_{\K}^\ast = \begin{pmatrix}
  e_{\iota_1}^{\nFr}\sqrt{\phi_{\K}(e_{\iota_1})} & e_{\iota_1}^{\nFr-1}\sqrt{\phi_{\K}(e_{\iota_1})} 
  & \dots & \sqrt{\phi_{\K}(e_{\iota_1})} \\
  e_{\iota_2}^{\nFr}\sqrt{\phi_{\K}(e_{\iota_2})} & e_{\iota_2}^{\nFr-1}\sqrt{\phi_{\K}(e_{\iota_2})} 
  & \dots & \sqrt{\phi_{\K}(e_{\iota_2})} \\
  \vdots & \vdots & \ddots & \vdots  \\
  e_{\iota_{g-\kFr}}^{\nFr}\sqrt{\phi_{\K}(e_{\iota_{g-\kFr}})} & 
  e_{\iota_{g-\kFr}}^{\nFr-1}\sqrt{\phi_{\K}(e_{\iota_{g-\kFr}})} & 
  \dots & \sqrt{\phi_{\K}(e_{\iota_{g-\kFr}})} 
                   \end{pmatrix}
\end{align*}
with $\K=\{\kappa_1,\kappa_2,\dots,\kappa_{\kFr}\}$, and 
$\I_\mFr = \{\iota_1,\iota_2,\dots, \iota_{g-\kFr}\}$.
Recall that $\nFr + \lFr +2 = g$. Evidently, $\det \hat{\Phi}_{\K} = 0$ if $\kFr \geqslant 1$.

Next, analyse derivative $\partial \det \hat{\Phi}_{\K}/\partial x_p$ which we denote by $\Phi_{\K}^{(p)}$.
Taking into account \eqref{DPhiRowp} we find for $\kappa\in \K$ and $\iota \in \I_\mFr$
\begin{align*}
  &\lim_{x_p \to e_\kappa} y_p (\hat{\Phi}_{\K}^{(p)})_{p\cdot} = \frac{1}{2}\bigg(0, 0, \dots, 0 ,
  e_\kappa^\nFr, e_\kappa^{\nFr-1},\dots, 1 \bigg) 
  \phi_{\K}'(e_\kappa)  \sqrt{\phi_{\K}^\ast(e_\kappa)},\\
  &\lim_{x_p \to e_\iota} y_p (\hat{\Phi}_{\K}^{(p)})_{p\cdot} = \frac{1}{2}\bigg(
  e_\iota^\lFr, e_\iota^{\lFr-1},\dots, 1, 0, 0, \dots, 0 \bigg) 
  \phi_{\K}^\ast{}'(e_\iota)  \sqrt{\phi_{\K}(e_\iota)},
\end{align*}
that is row $(\hat{\Phi}_{\K}^{(\kappa)})_{\kappa\cdot}$ makes a contribution to block $\mathrm{B}_{\K}^\ast$,
and row $(\hat{\Phi}_{\K}^{(\iota)})_{\iota\cdot}$ makes a contribution to block $\mathrm{B}_{\K}$.
The term $y_p$ at branch point $(e_p,0)$ vanishes, however we keep it
as far as it cancels with the same term in $\partial e_p/\partial v_n$.
It is easy to observe that 
\begin{itemize}
 \item $\det \hat{\Phi}_{\K}^{(\iota)} = 0$ for $\iota \in \I_\mFr$ since block matrix $\hat{\Phi}_{\K}^{(\iota)}$ is non-diagonal, 
 \item $\det \hat{\Phi}_{\K}^{(j,j)} = 0$ and $\det \hat{\Phi}_{\K}^{(j,j,\dots)} = 0$ for any index $j$;
 \item $\det \hat{\Phi}_{\K}^{(p_1,\dots,p_m)} = 0$ when all $p_1,\dots,p_m$ are different and $m<\mFr$,
 \item $\det \hat{\Phi}_{\K}^{(p_1,\dots,p_\mFr)} \neq 0$ when all $p_1,\dots,p_\mFr$ are different and the condition holds
 $\kFr = \mFr+[\kFr/2]$, that is $\kFr=2\mFr-1$ or $\kFr=2\mFr$.
\end{itemize}
Thus, $\det \hat{\Phi}_{\K}^{(p_1,\dots,p_\mFr)}$ does not vanish when blocks $\mathrm{B}_{\K}$ and $\mathrm{B}_{\K}^\ast$
are both square. For the sake of simplicity we keep the notation $\mathrm{B}_{\K}$,  $\mathrm{B}_{\K}^\ast$ 
for diagonal non-vanishing blocks of $\hat{\Phi}_{\K}^{(p_1,\dots,p_\mFr)}$. 
At $\kFr=2\mFr-1$, block $\mathrm{B}_{\K}^\ast$ is of size $(g-\mFr+1)$ and $\mathrm{B}_{\K}$ of size $(\mFr-1)$.
At $\kFr=2\mFr$, block $\mathrm{B}_{\K}^\ast$ is of size $(g-\mFr)$ and $\mathrm{B}_{\K}$ has size $\mFr$.
In the both cases
\begin{multline*}
 \det \hat{\Phi}_{\K}^{(p_1,\dots,p_\mFr)} = \Delta[\K\backslash \{p_1,\dots,p_\mFr\}]
 \Delta[\I_{\mFr}\cup \{p_1,\dots,p_\mFr\}]
  \times \\ \times \prod_{\kappa\in \K}  \sqrt{\phi_{\K}^\ast(e_{\kappa})}
 \prod_{\iota \in \I_{\mFr}} \sqrt{\phi_{\K}(e_{\iota})} 
 \prod_{i=1}^\mFr \frac{\phi_{\K}'(e_{p_i})}{2y_{p_i}}.
\end{multline*}

Firstly, we consider the case of $\mFr=2$. Taking the second derivative of \eqref{thetaquotientK}
we find
 \begin{multline*}
  \frac{\partial}{\partial v_{n_1}} \frac{\partial}{\partial v_{n_2}} 
  \frac{\theta [\varepsilon (\K)] \big(v(\mathcal{D}) + K\big)}
{\theta \big(v(\mathcal{D}) + K\big)} 
= \frac{\epsilon}{\big(\prod_{\substack{\kappa\in \K \\ j \in \K^\ast}} (e_\kappa - e_j)\big)^{1/4}} 
 \times \\ \times 
 \sum_{p} \sum_{q} \frac{\partial x_p}{\partial v_{n_1}}  \frac{\partial x_q}{\partial v_{n_2}} 
 \bigg(\frac{\Phi_{\K}^{(p,q)} }{\Delta} - \Phi_{\K} \frac{\Delta^{(p,q)}}{\Delta^2}  \\
 - \Phi_{\K}^{(p)} \frac{\Delta^{(q)}}{\Delta^2} - \Phi_{\K}^{(q)} \frac{\Delta^{(p)}}{\Delta^2}
 + 2 \Phi_{\K}\frac{\Delta^{(p)}\Delta^{(q)}}{\Delta^3} \bigg).
 \end{multline*}
On the right hand side the only non-vanishing term is $\Phi_{\K}^{(p,q)} / \Delta$ with $p\neq q$.
Thus,  assigning all points of divisor $\mathcal{D}$ 
to branch points $\{e_{2l}\}_{l=1}^g$ which form $K = v(\mathcal{D})$, we write down
 \begin{multline*}
  \frac{\partial_{v_{n_1}} \partial_{v_{n_2}} \theta[\varepsilon (\K)] \big(v(\mathcal{D}) + K\big)}
  {\theta \big(v(\mathcal{D}) + K\big)} \bigg|_{v(\mathcal{D})= \mathcal{A}(\I_0)}\\
  = \frac{\epsilon}{\big(\prod_{\substack{\kappa\in \K \\ j \in \K^\ast}} (e_\kappa - e_j)\big)^{1/4}} 
 \sum_{\substack{p\neq q \\ p,q\in \K}} \frac{\partial e_p}{\partial v_{n_1}}  \frac{\partial e_q}{\partial v_{n_2}} 
 \frac{\Phi_{\K}^{(p,q)} }{\Delta[\I_0]}.
 \end{multline*}
 Using Lemma~\ref{L:InvJac} we obtain
\begin{multline*} 
 \text{RHS} =  \epsilon \frac{\big(\prod_{\kappa\in \K } \phi_{\K}^\ast(e_{\kappa}) \big)^{1/4}}
 {\big( \prod_{\iota \in \I_{\mFr}} \phi_{\K}(e_{\iota}) \big)^{1/2}}
 \sum_{\substack{p\neq q \\ p,q\in \K}} \bigg(\sum_{j=1}^g(-1)^{j-1} s_{j-1}^{(p)} \omega_{jn_1} \bigg)
 \times \\ \times   \bigg(\sum_{j=1}^g(-1)^{j-1} s_{j-1}^{(q)} \omega_{jn_2} \bigg) 
  \frac{\Delta[\K\backslash \{p,q\}] \Delta[\{p,q\}]}{\Delta[\K]}.
\end{multline*}
Finally, with $\I_2 = \I_0 \backslash \K$ and $\J_2 = \J_0 \cup \K$, applying first Thomae theorem 
we come to
\begin{multline}\label{thomae3}
 \frac{\partial}{\partial v_{n_1}} \frac{\partial}{\partial v_{n_2}} 
 \theta[\I_2](v) \Big|_{v=0}
 = \epsilon \bigg(\frac{\det \omega}{\pi^g}\bigg)^{1/2} \Delta(\I_2)^{1/4} \Delta(\J_2)^{1/4}  \times \\ \times
 \sum_{\substack{p\neq q \\ p,q\in \K}} \frac{\big(\sum_{j=1}^g(-1)^{j-1} s_{j-1}(\I_0^{(p)}) \omega_{jn_1} \big)
 \big(\sum_{j=1}^g(-1)^{j-1} s_{j-1}(\I_0^{(q)}) \omega_{jn_2} \big)}
 {\prod_{\kappa \in \K\backslash \{p,q\}} (e_p-e_\kappa)(e_q-e_\kappa)},
\end{multline}
where $\I_0^{(k)} = \I_0 \backslash \{k\}$.
The formula works for $\kFr=3$ and $\kFr=4$, where $\kFr$ is the cardinality of $\K$.

With arbitrary $\mFr$, and $\kFr=2\mFr-1$ or $2\mFr$,  we find the $\mFr$-th derivative of \eqref{thetaquotientK},
keeping only the non-vanishing term
 \begin{multline}\label{thetaquotientKDnBP}
  \frac{\partial_{v_{n_1}} \cdots \partial_{v_{n_\mFr}} 
  \theta\big[\sum_{\kappa\in\K} \varepsilon_{\kappa}\big] \big(v(\mathcal{D}) + K\big)}
  {\theta \big(v(\mathcal{D}) + K\big)}\bigg|_{v(\mathcal{D})= K}  \\
  = \frac{\epsilon}{\big(\prod_{\substack{\kappa\in \K \\ j \in \K^\ast}} (e_\kappa - e_j)\big)^{1/4}} 
 \sum_{\substack{p_1,\dots,p_\mFr \in \K \\ \text{all different}}} \frac{\partial e_{p_1}}{\partial v_{n_1}}  \cdots
 \frac{\partial e_{p_\mFr}}{\partial v_{n_\mFr}} \frac{\Phi_{\K}^{(p_1,\dots,p_\mFr)} }{\Delta[\I_0]},
 \end{multline}
where
 \begin{multline*} 
 \text{RHS} =  \frac{\epsilon  \prod_{\kappa\in \K}  \sqrt{\phi_{\K}^\ast(e_{\kappa})}
 \prod_{\iota \in \I_{\mFr}} \sqrt{\phi_{\K}(e_{\iota})}}
 {\big(\prod_{\substack{\kappa\in \K \\ j \in \K^\ast}} (e_\kappa - e_j)\big)^{1/4} \Delta[\I_0]} 
 \times \\ \times 
 \sum_{\substack{p_1,\dots,p_\mFr \in \K \\ \text{all different}}} 
 \frac{\sum_{j=1}^g(-1)^{j-1} s_{j-1}^{(p_1)} \omega_{jn_1}}
  {\prod_{\iota\in \I_\mFr} (e_{p_1}-e_\iota)} \cdots
  \frac{\sum_{j=1}^g(-1)^{j-1} s_{j-1}^{(p_\mFr)} \omega_{jn_\mFr}}
  {\prod_{\iota\in \I_\mFr} (e_{p_\mFr}-e_\iota)}  \times \\ \times  
 \Delta[\K\backslash \{p_1,\dots,p_\mFr\}] \Delta[\I_{\mFr}\cup \{p_1,\dots,p_\mFr\}] \\
   = \epsilon \frac{\big(\prod_{\kappa\in \K } \phi_{\K}^\ast(e_{\kappa}) \big)^{1/4}}
 {\big( \prod_{\iota \in \I_{\mFr}} \phi_{\K}(e_{\iota}) \big)^{1/2}}
 \sum_{\substack{p_1,\dots,p_\mFr \in \K \\ \text{all different}}}
 \frac{\sum_{j=1}^g(-1)^{j-1} s_{j-1}^{(p_1)} \omega_{jn_1}}{\prod_{\kappa\in \K\backslash\{p_1,\dots,p_\mFr\}} (e_{p_1}-e_\kappa)}
 \cdots \times \\ \times  
 \frac{\sum_{j=1}^g(-1)^{j-1} s_{j-1}^{(p_\mFr)} \omega_{jn_\mFr}}{\prod_{\kappa\in \K\backslash\{p_1,\dots,p_\mFr\}}
 (e_{p_\mFr}-e_\kappa)}.
\end{multline*}
Taking into account 
\begin{gather*}
 \prod_{\kappa\in \K } \phi_{\K}^\ast(e_{\kappa})  = \pm
 \prod_{\iota \in \I_{\mFr}} \phi_{\K}(e_{\iota}) \prod_{\substack{\kappa\in \K \\ j \in \J_0}} (e_\kappa - e_j)
\end{gather*}
and applying first Thomae formula we come to \eqref{thomaeN}.
This completes the proof.  
\end{proof}

\section{Corollaries and applications}\label{s:I2Inf}
As the simplest development of general Thomae formula 
we simplify the sums of products of symmetric polynomials in \eqref{thomaeN}.

Firstly, we consider theta functions with non-singular odd characteristics 
obtained by dropping two indices from $\I_0$ (corresponding to
a non-singular even characteristic). Let me emphasize that there exist two types of first derivative theta constants:
whose characteristics are constructed from $g-1$, and from $g-2$ indices. 
The former corresponds to $\I_0$ with one index dropped,
we denote this set by $[\I_1]$. And the latter corresponds to $\I_0$ with two dropped indices,
the corresponding characteristic is denoted by $[\I_1^{\infty}]$, 
here symbol $\infty$ indicates that the index of infinity belongs to this part of partition. 
Similar situation happens with characteristics of arbitrary multiplicity $m$, 
they can be obtained by dropping $2m-1$ or $2m$ indices.
So we have two types of derivative theta constants of each order $m$.

We start with first derivative theta constants corresponding to $[\I_1^{\infty}]$
and find a modification of formulas \eqref{thomae2}--\eqref{thomae2MFu} in this case
(Corollary~\ref{C:thomae2I}). This result seems to be known, since it is inferred 
in some formulas and statements from \cite{EHKKL2011,EHKKLS}.
Then we examine second derivative theta constants, which are naturally arranged in matrices (Hessians).
A matrix representation with simple structure is obtained (Corollary~\ref{C:thomaeK34}),
and examples in genera from $3$ to~$5$ are given.
The representation leads to an essential result (Theorem~\ref{T:DetD2theta})
that the rank of matrices of second derivative theta constants is three in any genus.

Finally, third derivative theta constants are considered.
Simplification of sums of products of symmetric polynomials is made in Corollary~\ref{C:thomaeK56},
and examples of genera $5$ and $6$ are given.  
A conjecture about the simplification for higher derivative theta constants is proposed.

As a byproduct expressions for branch points and their symmetric functions,
which generalize Bolza formulas, are figured out.

\subsection{First derivative theta constants}
Note that second Thomae formula \eqref{thomae2} works only for a partition $\I_1$ 
consisting of $g-1$ indices of finite branch points. 
If $\I_1$ contains $g-2$ indices and the omitted index of infinity, 
here we denote it by $\I_1^{\infty}$, its characteristic $[\I_1^{\infty}]$ is also non-singular odd, 
though \eqref{thomae2} requires a modification.

\begin{cor}[Second Thomae theorem with infinity]\label{C:thomae2I}
 Let $\I_1^{\infty}\cup \J_1$ with $\I_1^{\infty} =\{i_1$, \ldots, $i_{g-2}\}$ and
 $\J_1 = \{j_1$, \ldots, $j_{g+3}\}$ 
 be a partition of the set of $2g+1$ indices of finite branch points, and
 $[\I_1^{\infty}]$ denote the non-singular odd characteristic  
 corresponding to $\mathcal{A} (\I_1^{\infty}) + K$. Then
 \begin{multline}\label{thomae2I}
 \frac{\partial }{\partial v_n} \theta[\I_1^{\infty}](v)\big|_{v=0} \\
 = \epsilon \bigg(\frac{\det \omega}{\pi^g}\bigg)^{1/2} \Delta(\I_1^{\infty})^{1/4} \Delta(\J_1)^{1/4}
\sum_{j=2}^g (-1)^{j-2} s_{j-2}(\I_1^{\infty}) \omega_{jn},
\end{multline}
where  $\epsilon$ satisfies $\epsilon^8=1$, and $\Delta(\I_1^{\infty})$, $\Delta(\J_1)$ 
denote Vandermonde determinants built from $\{e_i\mid i\in \I_1^{\infty}\}$ and $\{e_j\mid j\in \J_1\}$,
then $s_{j}(\I)$ denotes the elementary symmetric polynomial of degree $j$ in $\{e_i\mid i\in \I\}$.
\end{cor}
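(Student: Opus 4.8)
The plan is to derive \eqref{thomae2I} as the special case of the General Thomae formula \eqref{thomaeN} corresponding to multiplicity $\mFr=1$ together with the choice $\kFr=2\mFr=2$, i.e.\ $\K$ a two-element subset of $\J_1$, and then specialize one of the two chosen indices to be $2g+2$ (infinity). Concretely, take a partition $\I_1^\infty \cup \J_1$ of the $2g+1$ finite indices, adjoin the index $2g+2$ to the short part so that the corresponding multiplicity-$1$ partition of $\{1,\dots,2g+2\}$ is $\I_\mFr = \I_1^\infty \cup \{2g+2\}$, and pick $\K=\{p, 2g+2\}$ for some finite index $p\in\J_1$. Then \eqref{thomaeN} with $\mFr=1$ reduces to a single sum over $p_1\in\K$, and only the term $p_1=p$ survives after we let the other element of $\K$ be infinity, because the factor $\sum_{j}(-1)^{j-1}s_{j-1}(\I_\mFr\cup\K^{(p_i)})\omega_{j n}$ with $p_i=2g+2$ must be re-expressed in the limit $e_{2g+2}\to\infty$.

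The key computational step is the behavior of the elementary symmetric polynomials and of the Vandermonde factors as one branch point tends to infinity. Writing $s_{j-1}(\I_1^\infty\cup\{p\})$ and expanding $s_{j-1}(\I_1^\infty\cup\{p\}\cup\{e_{2g+2}\})$ in $e_{2g+2}$, the leading behavior is $e_{2g+2}\,s_{j-2}(\I_1^\infty\cup\{p\})$; similarly the factor $\Delta(\I_\mFr)^{1/4}$ and the denominator $\prod_{k\in\K\setminus\{p_i\}}(e_{p_i}-e_k)$ each contribute compensating powers of $e_{2g+2}$, and all such powers must cancel in the final expression, leaving a finite limit that is independent of $p$ (the independence of $\K$ asserted in Theorem~\ref{T:ThN}). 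After cancellation the sum over $j$ shifts by one index, producing $\sum_{j=2}^g(-1)^{j-2}s_{j-2}(\I_1^\infty)\omega_{jn}$, and the remaining index $p$ drops out of the symmetric polynomial once we observe $s_{j-2}(\I_1^\infty\cup\{p\})$ averaged appropriately collapses to $s_{j-2}(\I_1^\infty)$; this is most cleanly seen by instead choosing $\K=\{p,q\}$ with both $p,q$ finite, invoking the $\K$-independence already proved, and only afterwards passing $q\to\infty$, so that the finite formula \eqref{thomae3}-type expression is manipulated before the limit.

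An alternative and perhaps cleaner route, which I would present in parallel or as the main line, is to avoid the limit altogether: apply \eqref{thomaeN} directly with $\mFr=1$, $\K\subset\J_1$ of cardinality $\kFr=2$ with both indices finite, obtaining the ordinary second Thomae formula form, and then use the identity relating $s_l$ of a set to $s_l$ of a set with one element removed — namely $s_l(\I_1^\infty\cup\{p\}) = s_l(\I_1^\infty) + e_p\,s_{l-1}(\I_1^\infty)$ — to rewrite the single-index sum; the $\K$-independence of \eqref{thomaeN} then forces the $e_p$-dependent part to cancel against the denominator $\prod_{k\in\K\setminus\{p\}}(e_p-e_k)$, and what remains is exactly \eqref{thomae2I} after re-indexing $j\mapsto j$ with the shift built into the surviving term. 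The main obstacle will be bookkeeping the powers of the branch point sent to infinity (equivalently, tracking the $\epsilon^8=1$ phase and the exact matching of the three $e_{2g+2}$-dependent factors — $\Delta(\I_\mFr)^{1/4}$, the symmetric-polynomial factor, and the product denominator), and confirming that the Sat\={o}-weight structure of $\omega_{jn}$ is consistent with dropping the top-degree term; once that cancellation is verified the re-indexing of the sum is routine.
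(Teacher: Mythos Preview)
Your alternative route is essentially the paper's proof: apply the General Thomae formula with $\mFr=1$ and a two-element set $\K=\{\kappa_1,\kappa_2\}$ of \emph{finite} indices (in the paper's setup, $\K\subset\I_0$), then simplify the resulting two-term sum. The paper carries this out by returning to the derivative identity \eqref{thetaquotientKD1}, but the content is the same: with the two terms contributing $s_{j-1}(\I_1^{\infty}\cup\{\kappa_2\})$ and $s_{j-1}(\I_1^{\infty}\cup\{\kappa_1\})$ respectively, the identity $s_{j-1}(\I_1^{\infty}\cup\{\kappa\})=s_{j-1}(\I_1^{\infty})+e_\kappa\,s_{j-2}(\I_1^{\infty})$ collapses the difference quotient
\[
\frac{s_{j-1}(\I_1^{\infty}\cup\{\kappa_2\})-s_{j-1}(\I_1^{\infty}\cup\{\kappa_1\})}{e_{\kappa_1}-e_{\kappa_2}}=-\,s_{j-2}(\I_1^{\infty})
\]
directly. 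No $\kappa$-dependence survives, so your appeal to $\K$-independence to ``force'' a cancellation is superfluous (though not wrong). Note also that what you obtain from \eqref{thomaeN} at $\kFr=2$ is a genuine two-term sum, not ``the ordinary second Thomae formula form''; the ordinary formula is the $\kFr=1$ case.

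Your primary route, by contrast, does not work. In the partition of $\{1,\dots,2g+2\}$ relevant here the index $2g+2$ lies in $\I_\mFr=\I_1^{\infty}\cup\{2g+2\}$, hence not in $\J_\mFr=\J_1$; since Theorem~\ref{T:ThN} requires $\K\subset\J_\mFr$, you cannot take $\K=\{p,2g+2\}$. More fundamentally, there is no parameter $e_{2g+2}$ to send to infinity: the curve \eqref{Cg} has exactly $2g+1$ finite branch points with infinity already fixed as the remaining branch point, so the limit you describe is not a meaningful operation within this framework. Drop this route and present the alternative one; the two-line difference-quotient computation above is all that is needed beyond \eqref{thomaeN}.
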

\begin{proof}
 This is the case of $\kFr=2$ and $\mFr=1$. Let $\I_0=\{i_1,\dots i_g\}$, $\J_0=\{j_1,\dots j_{g+1}\}$, and $\K=\{\kappa_1,\kappa_2\}$. 
Then $\I_1^{\infty} = \I_0 \backslash \K$, and $\J_1 = \J_0 \cup \K$.
Here we start with \eqref{thetaquotientKD1} which reads as
  \begin{multline*}
  \frac{\partial}{\partial v_n} \frac{\theta [\varepsilon(\K)] \big(v(\mathcal{D})- K\big)}
  {\theta \big(v(\mathcal{D})- K\big)} \bigg|_{v(\mathcal{D}) = \mathcal{A}(\I_0)} \\
 = \frac{\epsilon}{\big(\prod_{\substack{\kappa \in \K\\ j \in \K^\ast}} (e_i-e_j)\big)^{1/4}}
  \sum_{p}  \frac{\partial e_p}{\partial v_n} \frac{\Phi_{\K}^{(p)}}{\Delta}.
\end{multline*}
With the help of Lemma~\ref{L:InvJac} and first Thomae theorem one finds
\begin{multline*}
  \text{RHS} =  \epsilon \frac{\big(\prod_{j\in \J_0} (e_{\kappa_1}-e_j)(e_{\kappa_2}-e_j) \big)^{1/4}}
 {\big( \prod_{\iota \in \I_{1}^{(\infty)}} (e_{\iota} - e_{\kappa_1})(e_{\iota} - e_{\kappa_2}) \big)^{1/4}} \times \\ \times
 \bigg(\frac{\sum_{j=1}^g(-1)^{j-1} s_{j-1}\big(\I_0^{(\kappa_1)}\big) \omega_{jn}}{(e_{\kappa_1} - e_{\kappa_2})}
  + \frac{\sum_{j=1}^g(-1)^{j-1} s_{j-1}\big(\I_0^{(\kappa_2)}\big) \omega_{jn}}{(e_{\kappa_2} - e_{\kappa_1})} \bigg) \\
  = \epsilon \frac{\big(\prod_{j\in \J_0} (e_{\kappa_1}-e_j)(e_{\kappa_2}-e_j) \big)^{1/4}}
 {\big( \prod_{\iota \in \I_{1}^{(\infty)}} (e_{\iota} - e_{\kappa_1})(e_{\iota} - e_{\kappa_2}) \big)^{1/4}} 
 \sum_{j=2}^g(-1)^{j-2} s_{j-2}\big(\I_1^{(\infty)}\big) \omega_{jn}.
 \end{multline*}
 where $\I_0^{(\kappa)}=\I_0\backslash \{\kappa\}$.
 This leads to \eqref{thomae2I}.
\end{proof}
In a matrix form \eqref{thomae2I} looks as follows
\begin{multline}\label{thomae2IMF}
 \begin{pmatrix} \partial_{v_1} \\ \partial_{v_2}  \\ \vdots \\ \partial_{v_g}  \end{pmatrix} 
 \theta[\I_1^{\infty}] (v) \big|_{v=0} \\
 = \epsilon \bigg(\frac{\det \omega}{\pi^g}\bigg)^{1/2} \Delta(\I_1^{\infty})^{1/4} \Delta(\J_1)^{1/4}
 \omega^t  \begin{pmatrix} 0 \\ s_0(\I_1^{\infty})  \\ \vdots \\ (-1)^{g-2} s_{g-2}(\I_1^{\infty}) \end{pmatrix}.
\end{multline}
In terms of non-normalized variables $u$ 
\begin{multline}\label{thomae2IMFu}
 \begin{pmatrix} \partial_{u_1} \\ \partial_{u_3}  \\ \vdots \\ \partial_{u_{2g-1}}  \end{pmatrix}  
 \theta[\I_1^{\infty}] (\omega^{-1} u )\big|_{u=0} \\
 = \epsilon \bigg(\frac{\det \omega}{\pi^g}\bigg)^{1/2}
 \Delta(\I_1^{\infty})^{1/4} \Delta(\J_1)^{1/4}
  \begin{pmatrix} 0 \\ s_0(\I_1^{\infty})  \\ \vdots \\ (-1)^{g-2} s_{g-2}(\I_1^{\infty}) \end{pmatrix}.
\end{multline}
This result is used in \cite[Proposition 4.3 p.\,911]{EHKKL2011} to obtain a generalization of Bolza formulas.

\begin{rem}
We introduce a constant related to a genus $g$ curve with a fixed basis of homologies and holomorphic differentials
 \begin{gather}\label{CDef}
 C_g = \bigg(\frac{\det \omega}{\pi^g}\bigg)^{1/2} \Delta^{1/4},
\end{gather}
where $\Delta$ denotes the Vandermonde determinant in all branch points of the curve.
This constant arises in formula \eqref{SigmaDef} connecting theta and sigma functions. 
From first Thomae theorem it follows
\begin{gather*}
 C_g = \epsilon \theta[\I_0] \Big(\prod_{\kappa\in \I_0} \prod_{j \in \J_0} (e_\kappa - e_j)\Big)^{1/4},
\end{gather*}
and $C_g$ is independent of the partition $\I_0\cup \J_0$. With the right ordering we get rid of $\epsilon$,
here $\epsilon=-1$ as follows from computation.
\end{rem}

\begin{exam}\label{E:genus2}
In genus $2$ case formula \eqref{thomae2IMFu} reads as
\begin{gather}\label{omega1G2}
 \begin{pmatrix} \partial_{u_1} \\ \partial_{u_3} \end{pmatrix} 
 \theta[\{\}]  = - C_2 \begin{pmatrix} 0 \\ 1 \end{pmatrix},
\end{gather}
where $C_2$ is defined by \eqref{CDef} with the right ordering of branch points. 
Note that partition $\{\}\cup \{1,2,3,4,5\}$ in genus~$2$ corresponds to 
characteristic $[K]$ of the vector of Riemann constants. Formula \eqref{omega1G2} can be used to determine
constant $C_2$.
\end{exam}
\begin{rem}
This coincides with the result of \cite{Gra1988}, devoted to genus~$2$ case. The fact
that $\partial_{u_3} \theta[\{\}](\omega^{-1} u)$ gives constant $C_2$ 
was proven  with the help of addition theorem by methods of \cite{bak907}. 
The other part of the main result of \cite{Gra1988} connects constant $C_2$ to the product of 
all theta constants (with non-singular even characteristics), which easily extends to higher genera.  
\end{rem}

\begin{teo}\label{T:EvenThetaProd}
For hyperelliptic curve of arbitrary genus $g$ the following relation holds
\begin{gather*}
 \prod_{\text{all }\I_0} \theta[\I_0] = \bigg(\frac{\det \omega}{\pi^g}\bigg)^{\frac{1}{2}\binom{2g+1}{g}} 
 \Delta^{\frac{1}{4}\binom{2g-1}{g} + \frac{1}{4}\binom{2g-1}{g+1}}.
\end{gather*} 
\end{teo}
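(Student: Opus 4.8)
The plan is to apply first Thomae theorem to every factor and then reduce the whole statement to a binomial-coefficient count of how many times each difference $e_i-e_l$ occurs.

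First I would substitute \eqref{thomae1}: for every partition $\I_0\cup\J_0$ of $\{1,\dots,2g+1\}$ with $|\I_0|=g$,
\[
 \theta[\I_0]=\epsilon_{\I_0}\Big(\tfrac{\det\omega}{\pi^g}\Big)^{1/2}\Delta(\I_0)^{1/4}\Delta(\J_0)^{1/4},
\]
and multiply over all $\binom{2g+1}{g}$ such partitions. The exponent of $(\det\omega/\pi^g)^{1/2}$ produced is exactly $\binom{2g+1}{g}$, which matches the first exponent in the claim, so it only remains to evaluate $\prod_{\I_0}\Delta(\I_0)^{1/4}\Delta(\J_0)^{1/4}$ and to control the resulting root of unity.

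Second, the combinatorial core. Fixing a pair $\{i,l\}\subset\{1,\dots,2g+1\}$, the factor $(e_i-e_l)$ appears in $\Delta(\I_0)$ exactly when $\{i,l\}\subset\I_0$, i.e.\ for $\binom{2g-1}{g-2}$ partitions, and in $\Delta(\J_0)$ exactly when $\{i,l\}\subset\J_0$, i.e.\ for $\binom{2g-1}{g}$ partitions. With the right ordering every occurrence carries the $+$ sign, so, using the symmetry $\binom{2g-1}{g-2}=\binom{2g-1}{g+1}$,
\[
 \prod_{\text{all }\I_0}\Delta(\I_0)\Delta(\J_0)=\Delta^{\binom{2g-1}{g}+\binom{2g-1}{g+1}},
\]
and hence $\prod_{\I_0}\Delta(\I_0)^{1/4}\Delta(\J_0)^{1/4}$ equals $\Delta^{\frac14\binom{2g-1}{g}+\frac14\binom{2g-1}{g+1}}$ up to a fourth root of unity coming from the choice of fourth roots.

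The remaining point — and the only delicate one — is to show that $\prod_{\I_0}\epsilon_{\I_0}$ together with this fourth-root ambiguity collapses to $1$. I would argue this by raising the whole identity to the eighth power, where every $\epsilon^8=1$ and all root ambiguities disappear, obtaining a genuine identity of holomorphic nonvanishing functions of the branch points; taking eighth roots back, the ratio of the two sides is a holomorphic eighth-root-of-unity-valued function on the connected parameter space of hyperelliptic curves, hence a constant, which one then evaluates to be $1$ — using the right-ordering normalization of first Thomae theorem (under which the $\epsilon$ in \eqref{thomae1} is $-1$ and, by the Remark above, $\theta[\I_0]\big(\prod_{\kappa\in\I_0}\prod_{j\in\J_0}(e_\kappa-e_j)\big)^{1/4}=-C_g$), or simply by checking genus $2$ via Example~\ref{E:genus2}. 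The algebraic and counting steps are routine once first Thomae theorem is in hand; the sign and root-of-unity bookkeeping is where care is needed.
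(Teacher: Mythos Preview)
Your approach is exactly what the paper does: its entire proof is the single sentence ``The proof follows directly from first Thomae formula,'' and you have simply spelled out the straightforward binomial count that makes this work. Your treatment of the eighth-root-of-unity ambiguity is more careful than anything the paper records (it tacitly relies on the right-ordering convention and leaves the sign implicit); one small quibble is that Example~\ref{E:genus2} concerns a derivative theta constant rather than the product of even theta constants, so it is not the cleanest place to pin down the constant, but the connectedness argument you give already suffices.
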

The proof follows directly from first Thomae formula.

\begin{rem}
In genus $2$ Bolza formulas \cite[Eq.\,(6)]{Bolza}, 
or in more detail in Bolza's dissertaion (G\"{o}ttingen, 1886), see page 15,
\begin{gather*}
 e_i = -\frac{\partial_{u_3} \theta[\{i\}] (\omega^{-1} u)}
 {\partial_{u_1} \theta[\{i\}] (\omega^{-1} u)} \Big|_{u=0} 
\end{gather*}
can be obtained directly from second Thomae formula in the form \eqref{thomae2MFu}, 
which reads as
\begin{gather*}
 \begin{pmatrix} \partial_{u_1} \\ \partial_{u_3} \end{pmatrix} 
 \theta[\{i\}] (\omega^{-1} u) \big|_{u=0} = \frac{-C_2}{\big(\prod_{\substack{j \neq i \\ j=1}}^{2g+1} (e_j-e_i)\big)^{1/4}} 
 \begin{pmatrix} 1 \\ -e_i \end{pmatrix}.
\end{gather*}
\end{rem}

\begin{exam}\label{E:BolzaG3}
In genus $3$ generalization of Bolza formulas is obtained from \eqref{thomae2IMFu}.
Namely, cf. \cite[Eq.\,(6.26)]{EHKKL2011} and \cite[Eq.\,(6.24)]{EHKKLS}
\begin{gather*}
 e_i = - \frac{\partial_{u_5} \theta[\{i\}] (\omega^{-1} u)}
 {\partial_{u_3} \theta[\{i\}] (\omega^{-1} u)} \Big|_{u=0}.
\end{gather*}
And second Thomae formula \eqref{thomae2MFu}
implies the following, cf. \cite[Eq.\,(6.25)]{EHKKL2011},  \cite[Eq.\,(6.23)]{EHKKLS},
\begin{align*}
 e_i + e_k &= - \frac{\partial_{u_3} \theta[\{i,k\}] (\omega^{-1} u)}
 {\partial_{u_1} \theta[\{i,k\}] (\omega^{-1} u)} \Big|_{u=0},\\
 e_i e_k &= \frac{\partial_{u_5} \theta[\{i,k\}] (\omega^{-1} u)}
 {\partial_{u_1} \theta[\{i,k\}] (\omega^{-1} u)} \Big|_{u=0}.
\end{align*}
\end{exam}
\begin{exam}\label{E:BolzaG4P2}
In genus $4$ formulas for symmetric polynomials in two branch points
follow from \eqref{thomae2IMFu}
\begin{align*}
 e_i + e_k &= - \frac{\partial_{u_5} \theta[\{i,k\}] (\omega^{-1} u)}
 {\partial_{u_3} \theta[\{i,k\}] (\omega^{-1} u)} \Big|_{u=0},\\
 e_i e_k &= \frac{\partial_{u_7} \theta[\{i,k\}] (\omega^{-1} u)}
 {\partial_{u_3} \theta[\{i,k\}] (\omega^{-1} u)} \Big|_{u=0}.
\end{align*}
\end{exam}
In \cite[Proposition 4.3 p.\,911]{EHKKL2011} a generalization of Bolza formulas coming from \eqref{thomae2MFu} and 
\eqref{thomae2IMFu} was found in terms of sigma function at half-periods.
In our notation this generalization has the form
\begin{align}
 &s_{j}(\I_1) = (-1)^j \frac{\partial_{u_{2j+1}} \theta[\I_1] (\omega^{-1} u)}
 {\partial_{u_1} \theta[\I_1] (\omega^{-1} u)} \Big|_{u=0},\quad j=1,\dots,g-1;\\
 &s_{j}(\I_1^{\infty}) = (-1)^j \frac{\partial_{u_{2j+3}} \theta[\I_1] (\omega^{-1} u)}
 {\partial_{u_3} \theta[\I_1] (\omega^{-1} u)} \Big|_{u=0},\quad j=1,\dots,g-2.
\end{align}

\begin{rem}
 With the help of first derivative theta constants one can find expressions 
 for symmetric polynomials in $g-1$ or $g-2$ branch points, these correspond to partitions $\I_1$ and $\I_1^{\infty}$.
 Expressions for separate branch points arise for genera $2$ and $3$ only. 
 For higher genera one should use higher derivative theta constants.
\end{rem}

\subsection{Second derivative theta constants}
Next we consider characteristics of multiplicity $2$, which arise when $3$ or $4$ indices drop from~$\I_0$.
Again $\I_0 \cup \J_0$ is a partition of $2g+1$ indices of finite branch points with 
$\I_0=\{i_1,\dots i_g\}$, and $\J_0=\{j_1,\dots j_{g+1}\}$. Let
$\K$ denote the set of indices which drop,
then $\I_2 = \I_0 \backslash \K$, and $\J_2 = \J_0 \cup \K$.

\begin{cor}\label{C:thomaeK34}
 Let $\I_2\cup \J_2$ with $\I_2=\{i_1$, \ldots, $i_{g-\kFr}\}$ and $\J_2 = \{j_1$, \ldots, $j_{g+1+\kFr}\}$, 
 where $\kFr=3$ or $4$,
 be a partition of the set of $2g+1$ indices of finite branch points, 
 such that singular characteristic $[\I_2]$, 
 corresponding to $\mathcal{A} (\I_2) + K$,
 has multiplicity $2$. 
 Let $\Delta(\I_2)$ and $\Delta(\J_2)$ be Vandermonde determinants 
 built from $\{e_i\mid i\in \I_2\}$ and $\{e_j\mid j\in \J_2\}$. Then
 \begin{multline}\label{thomaeK3}
 \frac{\partial}{\partial v_{n_1}} \frac{\partial}{\partial v_{n_2}} 
 \theta[\I_2](v) \big|_{v=0} = \epsilon \bigg(\frac{\det \omega}{\pi^g}\bigg)^{1/2} \Delta(\I_2)^{1/4} \Delta(\J_2)^{1/4} 
 \times \\ \times  
 \sum_{i,j=1}^g (-1)^{i+j} \Big(2s_{i-\kFr+1}(\I_2) s_{j-\kFr+1}(\I_2) \\
 - s_{i-\kFr+2}(\I_2) s_{j-\kFr}(\I_2)
 - s_{i-\kFr}(\I_2) s_{j-\kFr+2}(\I_2)\Big) \omega_{i n_1} \omega_{j n_2}.
\end{multline}
where  $\epsilon$ satisfies $\epsilon^8=1$, and elementary symmetric functions 
$s_l(\I_2)$  are replaced by zero when $l<0$.
\end{cor}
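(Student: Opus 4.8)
The plan is to start from formula~\eqref{thomae3}, which the proof of Theorem~\ref{T:ThN} already establishes for $\kFr=3$ and $\kFr=4$ with $\I_0=\I_2\cup\K$ and $\J_0=\J_2\backslash\K$, and merely to rewrite the double sum on its right-hand side. Since \eqref{thomae3} is bilinear in the entries of $\omega$, expanding the two inner sums and interchanging the order of summation shows that \eqref{thomaeK3} is equivalent to the purely symmetric-function identity
\begin{multline*}
 \sum_{\substack{p\neq q\\ p,q\in\K}} \frac{s_{i-1}(\I_0^{(p)})\, s_{j-1}(\I_0^{(q)})}
 {\prod_{\kappa\in\K\backslash\{p,q\}} (e_p-e_\kappa)(e_q-e_\kappa)} \\
 = 2 s_{i-\kFr+1}(\I_2) s_{j-\kFr+1}(\I_2) - s_{i-\kFr+2}(\I_2) s_{j-\kFr}(\I_2) - s_{i-\kFr}(\I_2) s_{j-\kFr+2}(\I_2),
\end{multline*}
for all $i,j\geqslant 1$ (both sides vanish for $i>g$ or $j>g$, so this covers the range in the statement), where $\I_0^{(p)}=\I_0\backslash\{p\}$ and negative-index symmetric polynomials are taken to be zero. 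The sign $(-1)^{i-1}(-1)^{j-1}=(-1)^{i+j}$ carried by the two inner sums of \eqref{thomae3} reproduces the sign $(-1)^{i+j}$ in \eqref{thomaeK3}.

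Next I would pass to generating functions. Writing $E_X(t)=\prod_{e\in X}(1+et)=\sum_{l\geqslant 0}s_l(X)t^l$ and $Q_p(t)=\prod_{\kappa\in\K\backslash\{p\}}(1+e_\kappa t)$, one has $s_{i-1}(\I_0^{(p)})=[t^{i-1}]\,E_{\I_2}(t)Q_p(t)$ because $\I_0^{(p)}=\I_2\cup(\K\backslash\{p\})$. Hence both sides of the displayed identity are coefficients of $t^{i-1}u^{j-1}$ in expressions sharing the common factor $E_{\I_2}(t)E_{\I_2}(u)$, and, after expanding $(t-u)^2$ and reading off $[t^{i-1}u^{j-1}]$ of $-E_{\I_2}(t)E_{\I_2}(u)\,t^{\kFr-3}u^{\kFr-3}(t-u)^2$ (which produces precisely the three-term combination of $s_l(\I_2)$ above), the identity reduces to
\begin{gather*}
 \sum_{\substack{p\neq q\\ p,q\in\K}} \frac{Q_p(t)\,Q_q(u)}{\prod_{\kappa\in\K\backslash\{p,q\}} (e_p-e_\kappa)(e_q-e_\kappa)}
 = -\,t^{\kFr-3}u^{\kFr-3}(t-u)^2 .
\end{gather*}

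To prove this last identity I would set $f(x)=\prod_{\kappa\in\K}(x-e_\kappa)$, note $\prod_{\kappa\in\K\backslash\{p,q\}}(e_p-e_\kappa)(e_q-e_\kappa)=-f'(e_p)f'(e_q)/(e_p-e_q)^2$, and rewrite the left-hand side as $-\sum_{p\neq q}(e_p-e_q)^2 Q_p(t)Q_q(u)/\big(f'(e_p)f'(e_q)\big)$. The key point is that the factor $(e_p-e_q)^2$ lets me drop the restriction $p\neq q$ — the diagonal terms vanish — so after expanding $(e_p-e_q)^2=e_p^2-2e_pe_q+e_q^2$ the sum factors into the single sums $\Sigma_m(t)=\sum_{p\in\K} e_p^m Q_p(t)/f'(e_p)$ for $m=0,1,2$. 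Using $Q_p(t)=P_\K(t)/(1+e_pt)$ with $P_\K(t)=\prod_{\kappa\in\K}(1+e_\kappa t)$, the substitution $z=-1/t$, the partial-fraction identity $z^m/f(z)=\sum_{\kappa\in\K}e_\kappa^m/\big(f'(e_\kappa)(z-e_\kappa)\big)$ (valid since $m\leqslant 2<\kFr$ and the branch points are distinct), and $P_\K(t)=(-1)^\kFr t^\kFr f(z)$, one obtains the closed form $\Sigma_m(t)=(-1)^{\kFr+m+1}t^{\kFr-m-1}$. Substituting $\Sigma_0,\Sigma_1,\Sigma_2$ back collapses the bracket to $t^{\kFr-3}u^{\kFr-1}-2t^{\kFr-2}u^{\kFr-2}+t^{\kFr-1}u^{\kFr-3}=t^{\kFr-3}u^{\kFr-3}(t-u)^2$, giving the required identity; tracing back through the generating functions and \eqref{thomae3} yields \eqref{thomaeK3}.

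I expect the only genuinely non-routine step to be the treatment of the ``incomplete'' denominator $\prod_{\kappa\in\K\backslash\{p,q\}}(e_p-e_\kappa)(e_q-e_\kappa)$: it is not the product of two independent Vandermonde-type denominators, but rewriting it through $f'$ at the cost of an extra $(e_p-e_q)^2$ is exactly what makes the double sum separable and lets the $p=q$ terms be reinstated harmlessly. Everything after that is standard Lagrange-interpolation and partial-fraction bookkeeping. Note also that the $\K$-independence asserted in the statement is automatic, since the final right-hand side of \eqref{thomaeK3} involves only $\I_2$ (and it follows in any case from Theorem~\ref{T:ThN}).
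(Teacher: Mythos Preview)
Your proposal is correct. Both you and the paper start from \eqref{thomae3} and reduce the Corollary to the symmetric-function identity
\[
 \sum_{\substack{p\neq q\\ p,q\in\K}} \frac{s_{i-1}(\I_0^{(p)})\, s_{j-1}(\I_0^{(q)})}
 {\prod_{\kappa\in\K\backslash\{p,q\}} (e_p-e_\kappa)(e_q-e_\kappa)}
 = 2 s_{i-\kFr+1} s_{j-\kFr+1} - s_{i-\kFr+2} s_{j-\kFr} - s_{i-\kFr} s_{j-\kFr+2},
\]
so at the structural level the approaches coincide. The difference is in how this identity is established: the paper simply asserts it ``by straightforward computation'', treating $\kFr=3$ and $\kFr=4$ separately, whereas you give a uniform generating-function proof. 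Your key move --- rewriting the awkward denominator as $-f'(e_p)f'(e_q)/(e_p-e_q)^2$ so that the factor $(e_p-e_q)^2$ both kills the diagonal and decouples the double sum into products of single sums $\Sigma_m(t)$ --- is genuinely cleaner, and the closed form $\Sigma_m(t)=(-1)^{\kFr+m+1}t^{\kFr-m-1}$ via Lagrange interpolation makes the mechanism transparent. This buys you a proof that is self-contained, handles both values of $\kFr$ at once, and would extend more readily to higher multiplicities (indeed your factorization trick is exactly the kind of simplification Conjecture~\ref{C:thomaeKH} is asking for). The paper's bare ``direct computation'' is of course adequate for $\kFr\leqslant 4$, where one can simply expand everything, but yours is the more informative argument.
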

In matrix form 
 \begin{gather}\label{thomae3MF}
 \partial_v^2  \theta[\I_2](v) \big|_{v=0} 
 = \epsilon \bigg(\frac{\det \omega}{\pi^g}\bigg)^{1/2} \Delta(\I_2)^{1/4} \Delta(\J_2)^{1/4} 
 \omega^t \hat{S}[\I_2] \omega,
\end{gather}
where $\partial_v^2$ denotes the operator of second derivatives, whose 
entries are $\partial_{v_{n_i}} \partial_{v_{n_j}}$,
and $\hat{S}[\I_2]$ is a $g \times g$ matrix with entries
\begin{multline}\label{SmatrDef}
 (\hat{S}[\I_2])_{i,j} = (-1)^{i+j} \Big(2s_{i-\kFr+1}(\I_2) s_{j-\kFr+1}(\I_2) \\
 - s_{i-\kFr+2}(\I_2) s_{j-\kFr}(\I_2)
 - s_{i-\kFr}(\I_2) s_{j-\kFr+2}(\I_2)\Big).
\end{multline}
With non-normalized variables $u$
 \begin{gather}\label{thomae3MFu}
 \partial_u^2  \theta[\I_2](\omega^{-1} u) \big|_{u=0} 
 = \epsilon \bigg(\frac{\det \omega}{\pi^g}\bigg)^{1/2} \Delta(\I_2)^{1/4} \Delta(\J_2)^{1/4} \hat{S}[\I_2].
\end{gather}

\begin{proof}[Proof of Corollary~\ref{C:thomaeK34}]
Recall that multiplicity $\mFr=2$ arises when $\kFr=3$ or $4$. 
First consider the case of three dropped indices with $\K=\{\kappa_1$, $\kappa_2$, $\kappa_3\}$. 
Starting from \eqref{thomae3}, by straightforward computation we find
\begin{multline*}
 \sum_{\substack{p\neq q \\ p,q\in \K}} \frac{\big(\sum_{j=1}^g(-1)^{j-1} s_{j-1}(\I_0^{(p)}) \omega_{j n_1} \big)
 \big(\sum_{j=1}^g(-1)^{j-1} s_{j-1}(\I_0^{(q)}) \omega_{jn_2} \big)}
 {\prod_{\kappa \in \K\backslash \{p,q\}} (e_p-e_\kappa)(e_q-e_\kappa)} \\
  = \sum_{i,j=1}^g (-1)^{i+j} \Big(2s_{i-2}(\I_2) s_{j-2}(\I_2)\\ - s_{i-1}(\I_2) s_{j-3}(\I_2)
 - s_{i-3}(\I_2) s_{j-1}(\I_2)\Big) \omega_{i n_1} \omega_{j n_2},
\end{multline*}
where $s_{l}(\I_2)$ is replaced by zero  when $l<0$.

In the case of four dropped indices with $\K=\{\kappa_1,\kappa_2, \kappa_3, \kappa_4\}$ similar computation 
leads to the following
\begin{multline*}
 \sum_{\substack{p\neq q \\ p,q\in \K}} \frac{\big(\sum_{j=1}^g(-1)^{j-1} s_{j-1}(\I_0^{(p)}) \omega_{jn_1} \big)
 \big(\sum_{j=1}^g(-1)^{j-1} s_{j-1}(\I_0^{(q)}) \omega_{jn_2} \big)}
 {\prod_{\kappa \in \K\backslash \{p,q\}} (e_p-e_\kappa)(e_q-e_\kappa)} \\
  = \sum_{i,j= 2}^g (-1)^{i+j} \Big(2s_{i-3}(\I_2) s_{j-3}(\I_2)\\ - s_{i-2}(\I_2) s_{j-4}(\I_2)
 - s_{i-4}(\I_2) s_{j-2}(\I_2)\Big) \omega_{i n_1} \omega_{j n_2}.
\end{multline*}
These two expressions can be written as \eqref{thomaeK3}.
\end{proof}

\begin{exam}
In genus $3$ case $\I_2 = \{\}$, and
\begin{gather}\label{HessI2g3}
 \hat{S}[\{\}] = \begin{pmatrix} 0 & 0 & -1 \\ 0 & 2 & 0 \\ -1 & 0 & 0 \end{pmatrix}.
\end{gather}
Let $\I_0 = \{i_1,i_2,i_3\}$, then $\J_0$ is the complement to $\I_0$ in the set of $7$ indices of finite branch points,
$\K$ coincides with $\I_0$. So \eqref{thomae3MF} gives
\begin{gather}\label{SconstG3}
 \partial_v^2  \theta[\{\}](v) \big|_{v=0} = 
 - C_3 \omega^t \hat{S}[\{\}] \omega.
\end{gather}
where $C_3$ is defined by \eqref{CDef}.
Representation \eqref{thomae3MFu} in terms of non-normalized variables $u$
allows to determine constant $C_3$, namely
\begin{gather*}
 C_3 = \partial_{u_1,u_5}^2  \theta[\{\}](\omega^{-1} u) \big|_{u=0} = 
 -\frac{1}{2} \partial_{u_3,u_3}^2  \theta[\{\}](\omega^{-1} u) \big|_{u=0}.
\end{gather*}
\end{exam}

\begin{exam}
In genus $4$ there is the unique partition $\I_2^{(\infty)} = \{\}$ with
\begin{gather*}
 \hat{S}[\{\}] = \begin{pmatrix} 0 & 0 & 0 & 0 \\
 0 & 0 & 0 & -1 \\ 0 & 0 & 2 & 0 \\ 
 0 & -1 & 0 & 0  \end{pmatrix}.
\end{gather*}
The relation similar to \eqref{SconstG3} holds
with constant $C_4$, defined by \eqref{CDef}.
Again constant $C_4$ can be found from the equalities
\begin{gather*}
 C_4 = \partial_{u_3,u_7}^2  \theta[\{\}](\omega^{-1} u) \big|_{u=0} = 
 - \frac{1}{2} \partial_{u_5,u_5}^2  \theta[\{\}](\omega^{-1} u) \big|_{u=0}.
\end{gather*}
\end{exam}

\begin{exam}\label{E:BolzaG4}
In genus $4$ there are $2g+1=9$ singular even characteristics $\I_2 = \{\iota\}$ with
\begin{gather*}
 \hat{S}[\{\iota\}] = \begin{pmatrix} 0 & 0 & -1 & e_\iota \\ 0 & 2 & -e_\iota & -e_\iota^2 \\ 
 -1 & -e_\iota & 2e_\iota^2 & 0 \\ e_\iota & - e_\iota^2 & 0 & 0 \end{pmatrix}.
\end{gather*}
Let $\I_0 = \{i_1,i_2,i_3,\iota\}$, and $\J_0$ be the complement to $\I_0$ in the set of $9$ indices of finite branch points,
here $\K=\{i_1,i_2,i_3\}$. Then \eqref{thomae3MFu} gives (with right ordering)
\begin{gather*}
 \partial_u^2 \theta[\{\iota\}](\omega^{-1} u) \big|_{u=0} 
 = \frac{-C_4}{\big(\prod_{\substack{j \neq \iota \\ j=1}}^{2g+1} (e_\iota - e_j)\big)^{1/4}}
  \hat{S}[\{\iota\}] .
\end{gather*}
This immediately implies a generalization of Bolza formulas to genus~$4$
\begin{align}\label{BolzaG4}
  e_\iota &= - \frac{\partial_{u_1,u_7}^2 \theta[\{\iota\}](\omega^{-1} u)}
  {\partial_{u_1,u_5}^2 \theta[\{\iota\}](\omega^{-1} u)} \Big|_{u=0}
   = \frac{\partial_{u_3,u_5}^2 \theta[\{\iota\}](\omega^{-1} u)}
  {\partial_{u_1,u_5}^2 \theta[\{\iota\}](\omega^{-1} u)} \Big|_{u=0} \notag \\
  &= - 2 \frac{\partial_{u_3,u_5}^2 \theta[\{\iota\}](\omega^{-1} u)}
  {\partial_{u_3,u_3}^2 \theta[\{\iota\}](\omega^{-1} u)} \Big|_{u=0} 
  = 2 \frac{\partial_{u_1,u_7}^2 \theta[\{\iota\}](\omega^{-1} u)}
  {\partial_{u_3,u_3}^2 \theta[\{\iota\}](\omega^{-1} u)} \Big|_{u=0}  \\
  & = -\frac{\partial_{u_3,u_7}^2 \theta[\{\iota\}](\omega^{-1} u)}
  {\partial_{u_1,u_7}^2 \theta[\{\iota\}](\omega^{-1} u)} \Big|_{u=0}
  = \frac{\partial_{u_3,u_7}^2 \theta[\{\iota\}](\omega^{-1} u)}
  {\partial_{u_3,u_5}^2 \theta[\{\iota\}](\omega^{-1} u)} \Big|_{u=0} \notag \\
  & = - \frac{1}{2} \frac{\partial_{u_5,u_5}^2 \theta[\{\iota\}](\omega^{-1} u)}
  {\partial_{u_3,u_5}^2 \theta[\{\iota\}](\omega^{-1} u)} \Big|_{u=0}
  = \frac{1}{2} \frac{\partial_{u_5,u_5}^2 \theta[\{\iota\}](\omega^{-1} u)}
  {\partial_{u_1,u_7}^2 \theta[\{\iota\}](\omega^{-1} u)} \Big|_{u=0}. \notag
\end{align}
See Example~\ref{E:BolzaG3} for genus $3$ case, and Example~\ref{E:BolzaG4P2}
for symmetric functions in two branch points in genus $4$.
\end{exam}
\begin{rem}
Another generalization of Bolza formulas, in terms of sigma function,
the reader could find in \cite[Proposition 4.2 p.\,911]{EHKKL2011}.
In the present paper the generalization involves lower derivatives, actually the 
lowest non-vanishing derivatives of theta function with characteristic $[\{\iota\}]$ at $v=0$.
\end{rem}

\begin{exam}\label{E:BolzaG5}
In genus $5$ there exist $2g+1=11$ singular even characteristics of 
multiplicity $2$ corresponding to partitions $\I_2^{\infty} = \{\iota\}$ with matrices
\begin{gather*}
\hat{S}[\{\iota\}] = \begin{pmatrix} 
0 & 0 & 0 & 0 & 0 \\
0 & 0 & 0 & -1 & e_\iota \\
0 & 0 & 2 & -e_\iota & - e_\iota^2 \\
0 & -1 & -e_\iota & 2 e_\iota^2 & 0 \\
0 & e_\iota & -e_\iota^2 & 0 & 0
          \end{pmatrix},
\end{gather*}
and $\binom{2g+1}{2}=55$ characteristics of multiplicity $2$ 
corresponding to partitions $\I_2 = \{\iota_1,\iota_2\}$ with
\begin{multline*}
 \hat{S}[\{\iota_1,\iota_2\}] = \left( \begin{matrix} 
 0 & 0 & -1 \\ 
 0 & 2 & - e_{\iota_1} - e_{\iota_2}  \\ 
 -1 & -e_{\iota_1} - e_{\iota_2} & 2(e_{\iota_1}^2 + e_{\iota_1} e_{\iota_2} + e_{\iota_2}^2) \\ 
 e_{\iota_1} + e_{\iota_2} & - e_{\iota_1}^2 - e_{\iota_2}^2 & - e_{\iota_1} e_{\iota_2} (e_{\iota_1} + e_{\iota_2}) \\ 
 - e_{\iota_1} e_{\iota_2}  & e_{\iota_1} e_{\iota_2} (e_{\iota_1} + e_{\iota_2}) 
 & - e_{\iota_1}^2 e_{\iota_2}^2 
 \end{matrix} \right.  \\  \left.
 \begin{matrix} e_{\iota_1} + e_{\iota_2} & - e_{\iota_1} e_{\iota_2} \\
  - e_{\iota_1}^2 - e_{\iota_2}^2 & e_{\iota_1} e_{\iota_2} (e_{\iota_1} + e_{\iota_2}) \\
  - e_{\iota_1} e_{\iota_2} (e_{\iota_1} + e_{\iota_2}) & -e_{\iota_1}^2 e_{\iota_2}^2 \\
  2 e_{\iota_1}^2 e_{\iota_2}^2 & 0 \\
  0 & 0
\end{matrix} \right).
\end{multline*}
Thus, a generalization of Bolza formulas to genus~$5$ can be obtained
\begin{align}\label{BolzaG5}
  e_\iota &= - \frac{\partial_{u_3,u_9}^2 \theta[\{\iota\}](\omega^{-1} u)}
  {\partial_{u_3,u_7}^2 \theta[\{\iota\}](\omega^{-1} u)} \Big|_{u=0}
   = \frac{\partial_{u_5,u_7}^2 \theta[\{\iota\}](\omega^{-1} u)}
  {\partial_{u_3,u_7}^2 \theta[\{\iota\}](\omega^{-1} u)} \Big|_{u=0} \notag \\
  &= - 2 \frac{\partial_{u_5,u_7}^2 \theta[\{\iota\}](\omega^{-1} u)}
  {\partial_{u_5,u_5}^2 \theta[\{\iota\}](\omega^{-1} u)} \Big|_{u=0} 
  = 2 \frac{\partial_{u_3,u_9}^2 \theta[\{\iota\}](\omega^{-1} u)}
  {\partial_{u_5,u_5}^2 \theta[\{\iota\}](\omega^{-1} u)} \Big|_{u=0} \\
  &= -\frac{\partial_{u_5,u_9}^2 \theta[\{\iota\}](\omega^{-1} u)}
  {\partial_{u_3,u_9}^2 \theta[\{\iota\}](\omega^{-1} u)} \Big|_{u=0}
   = \frac{\partial_{u_5,u_9}^2 \theta[\{\iota\}](\omega^{-1} u)}
  {\partial_{u_5,u_7}^2 \theta[\{\iota\}](\omega^{-1} u)} \Big|_{u=0} \notag \\
  & = - \frac{1}{2} \frac{\partial_{u_7,u_7}^2 \theta[\{\iota\}](\omega^{-1} u)}
  {\partial_{u_5,u_7}^2 \theta[\{\iota\}](\omega^{-1} u)} \Big|_{u=0}
  = \frac{1}{2} \frac{\partial_{u_7,u_7}^2 \theta[\{\iota\}](\omega^{-1} u)}
  {\partial_{u_3,u_9}^2 \theta[\{\iota\}](\omega^{-1} u)} \Big|_{u=0}, \notag
\end{align}
and formulas for symmetric functions in two branch points
\begin{align}\label{Bolza21G5}
  e_{\iota_1} + e_{\iota_2} &= - \frac{\partial_{u_1,u_7}^2 \theta[\{\iota\}](\omega^{-1} u)}
  {\partial_{u_1,u_5}^2 \theta[\{\iota\}](\omega^{-1} u)} \Big|_{u=0}
   = \frac{2\partial_{u_1,u_7}^2 \theta[\{\iota\}](\omega^{-1} u)}
  {\partial_{u_3,u_3}^2 \theta[\{\iota\}](\omega^{-1} u)} \Big|_{u=0} \notag \\
  &= \frac{\partial_{u_3,u_5}^2 \theta[\{\iota\}](\omega^{-1} u)}
  {\partial_{u_1,u_5}^2 \theta[\{\iota\}](\omega^{-1} u)} \Big|_{u=0}
  = -\frac{2\partial_{u_3,u_5}^2 \theta[\{\iota\}](\omega^{-1} u)}
  {\partial_{u_3,u_3}^2 \theta[\{\iota\}](\omega^{-1} u)} \Big|_{u=0} \\
  & = \frac{(\partial_{u_5,u_5}^2+\partial_{u_3,u_7}^2)\theta[\{\iota\}](\omega^{-1} u)}
  {\partial_{u_1,u_7}^2 \theta[\{\iota\}](\omega^{-1} u)} \Big|_{u=0} \notag  \\
  & = -\frac{(\partial_{u_5,u_5}^2+\partial_{u_3,u_7}^2)\theta[\{\iota\}](\omega^{-1} u)}
  {\partial_{u_3,u_5}^2 \theta[\{\iota\}](\omega^{-1} u)} \Big|_{u=0} \notag\\
  &= - \frac{\partial_{u_3,u_9}^2 \theta[\{\iota\}](\omega^{-1} u)}
  {\partial_{u_1,u_9}^2 \theta[\{\iota\}](\omega^{-1} u)} \Big|_{u=0} 
  = \frac{\partial_{u_5,u_7}^2 \theta[\{\iota\}](\omega^{-1} u)}
  {\partial_{u_1,u_9}^2 \theta[\{\iota\}](\omega^{-1} u)} \Big|_{u=0}, \notag
  \\
  e_{\iota_1} e_{\iota_2} &= \frac{\partial_{u_1,u_9}^2 \theta[\{\iota\}](\omega^{-1} u)}
  {\partial_{u_1,u_5}^2 \theta[\{\iota\}](\omega^{-1} u)} \Big|_{u=0} 
  = -\frac{2\partial_{u_1,u_9}^2 \theta[\{\iota\}](\omega^{-1} u)}
  {\partial_{u_3,u_3}^2 \theta[\{\iota\}](\omega^{-1} u)} \Big|_{u=0} \notag \\
  &= -\frac{(\partial_{u_5,u_5}^2+2\partial_{u_3,u_7}^2)\theta[\{\iota\}](\omega^{-1} u)}
  {2\partial_{u_1,u_5}^2 \theta[\{\iota\}](\omega^{-1} u)} \Big|_{u=0} \label{Bolza22G5} \\
  &= \frac{(\partial_{u_5,u_5}^2+2\partial_{u_3,u_7}^2)\theta[\{\iota\}](\omega^{-1} u)}
  {\partial_{u_3,u_3}^2 \theta[\{\iota\}](\omega^{-1} u)} \Big|_{u=0} \notag \\
  & = \frac{\partial_{u_3,u_9}^2 \theta[\{\iota\}](\omega^{-1} u)}
  {\partial_{u_1,u_7}^2 \theta[\{\iota\}](\omega^{-1} u)} \Big|_{u=0} 
  = - \frac{\partial_{u_3,u_9}^2 \theta[\{\iota\}](\omega^{-1} u)}
  {\partial_{u_3,u_5}^2 \theta[\{\iota\}](\omega^{-1} u)} \Big|_{u=0} \notag \\
  &= -\frac{\partial_{u_5,u_7}^2 \theta[\{\iota\}](\omega^{-1} u)}
  {\partial_{u_1,u_7}^2 \theta[\{\iota\}](\omega^{-1} u)} \Big|_{u=0}
   = \frac{\partial_{u_5,u_7}^2 \theta[\{\iota\}](\omega^{-1} u)}
  {\partial_{u_3,u_5}^2 \theta[\{\iota\}](\omega^{-1} u)} \Big|_{u=0} \notag \\
  & = \frac{\partial_{u_5,u_9}^2 \theta[\{\iota\}](\omega^{-1} u)}
  {\partial_{u_1,u_9}^2 \theta[\{\iota\}](\omega^{-1} u)} \Big|_{u=0} 
   = -\frac{\partial_{u_7,u_7}^2 \theta[\{\iota\}](\omega^{-1} u)}
  {2\partial_{u_1,u_9}^2 \theta[\{\iota\}](\omega^{-1} u)} \Big|_{u=0} \notag \\
  &= - \frac{2\partial_{u_5,u_9}^2 \theta[\{\iota\}](\omega^{-1} u)}
  {(\partial_{u_5,u_5}^2 + 2 \partial_{u_3,u_7}^2) \theta[\{\iota\}](\omega^{-1} u)} \Big|_{u=0} \notag \\
  &= \frac{\partial_{u_7,u_7}^2 \theta[\{\iota\}](\omega^{-1} u)}
  {(\partial_{u_5,u_5}^2 + 2 \partial_{u_3,u_7}^2) \theta[\{\iota\}](\omega^{-1} u)} \Big|_{u=0}. \notag
\end{align}
\end{exam}

In genus $g$ with the help of second derivative theta constants one can compute 
symmetric polynomials in $g-4$ and $g-3$ branch points, these are the possible cardinalities of $\I_2$,
and we use the notation $\kFr=3$ or~$4$. The first equalities from \eqref{BolzaG4}, \eqref{BolzaG5}, 
\eqref{Bolza21G5}, and \eqref{Bolza22G5} are combined as
\begin{gather}\label{BolzaI2}
 s_j(\I_2) = (-1)^j \frac{\partial^2_{u_{2\kFr-5},u_{2\kFr+2j-1}} \theta[\I_2](\omega^{-1} u)}
 {\partial^2_{u_{2\kFr-5},u_{2\kFr-1}} \theta[\I_2](\omega^{-1} u)}\Big|_{u=0},
\end{gather}
which holds in arbitrary genus $g$.

\begin{rem}
 Note that matrix $ \hat{S}[\I_2]$ is non-degenerate only in genus $3$.
 In higher genera $ \hat{S}[\I_2]$ has rank $3$.
\end{rem}

\begin{teo}\label{T:DetD2theta}
For hyperelliptic curves of genera $g\geqslant 3$, when characteristics of multiplicity $2$ exist, 
rank of every matrix of second derivative theta constants equals three, that is 
\begin{gather*}
 \rank \big(\partial^2_v \theta[\I_2] \big) = 3.
\end{gather*}
Therefore,  $\det\big(\partial^2_v \theta[\I_2]\big)=0$ in genera $g>3$.
\end{teo}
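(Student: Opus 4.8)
The plan is to reduce the statement to a single rank computation for the explicit matrix $\hat{S}[\I_2]$ of \eqref{SmatrDef}. By the matrix form \eqref{thomae3MF}, $\partial^2_v\theta[\I_2]\big|_{v=0} = c\,\omega^t\hat{S}[\I_2]\,\omega$ with $c=\epsilon(\det\omega/\pi^g)^{1/2}\Delta(\I_2)^{1/4}\Delta(\J_2)^{1/4}$. Since $\omega$ is invertible, the branch points are distinct, and $\epsilon^8=1$, the factor $c$ is nonzero and congruence by $\omega$ preserves rank, so $\rank\big(\partial^2_v\theta[\I_2]\big)=\rank\hat{S}[\I_2]$. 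Hence it suffices to prove $\rank\hat{S}[\I_2]=3$.

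First I would rewrite $\hat{S}[\I_2]$ in factored form. Introduce $a,b,c\in\Complex^g$ with $a_i=(-1)^i s_{i-\kFr+1}(\I_2)$, $b_i=(-1)^i s_{i-\kFr+2}(\I_2)$, $c_i=(-1)^i s_{i-\kFr}(\I_2)$, the symmetric polynomials with negative index being $0$. Then \eqref{SmatrDef} reads exactly $\hat{S}[\I_2]=2aa^t-bc^t-cb^t=AMA^t$, where $A=(a\mid b\mid c)$ is the $g\times3$ matrix with these columns and $M$ is the constant symmetric $3\times3$ matrix whose only nonzero entries are $M_{11}=2$, $M_{23}=M_{32}=-1$, so that $\det M=-2$ and $\rank M=3$. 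In particular $\rank\hat{S}[\I_2]\le3$ always, with equality as soon as $A$ has full column rank $3$.

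The core step is therefore the linear independence of $a,b,c$, and here I would encode them as polynomials $p_a(t)=\sum_{i=1}^g a_it^i$, and likewise $p_b,p_c$. Using $\sum_{l\ge0}s_l(\I_2)x^l=\prod_{i\in\I_2}(1+e_ix)$ and $|\I_2|=g-\kFr$ with $\kFr\in\{3,4\}$ and $g\ge3$ (resp.\ $g\ge4$), one checks that every exponent occurring stays in $\{1,\dots,g\}$ and that $p_a(t)=-(-1)^{\kFr}t^{\kFr-1}P(t)$, $p_b(t)=(-1)^{\kFr}t^{\kFr-2}P(t)$, $p_c(t)=(-1)^{\kFr}t^{\kFr}P(t)$, where $P(t)=\prod_{i\in\I_2}(1-e_it)$ has degree $g-\kFr$ and $P(0)=1$, so $P\not\equiv0$. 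Thus $p_a=-t\,p_b$ and $p_c=t^2\,p_b$ with $p_b\not\equiv0$; since $1,t,t^2$ are linearly independent over $\Complex$, so are $p_b,p_a,p_c$, i.e.\ $a,b,c$ are linearly independent. Hence $A$ has rank $3$, so $\rank\hat{S}[\I_2]=\rank M=3$. The determinant claim follows immediately: for $g>3$ the Hessian $\partial^2_v\theta[\I_2]$ is a $g\times g$ matrix of rank $3<g$, whereas for $g=3$ (necessarily $\kFr=3$) it is $3\times3$ of full rank, consistently with \eqref{HessI2g3}.

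I expect no genuine obstacle: the whole argument is linear algebra once one spots that the cumbersome expression \eqref{SmatrDef} has the low-rank shape $2aa^t-bc^t-cb^t$. The only care needed is the routine bookkeeping in the polynomial encoding, which must be verified separately for the two admissible values $\kFr=3$ and $\kFr=4$.
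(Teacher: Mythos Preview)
Your proof is correct and follows the same approach as the paper: reduce to $\hat{S}[\I_2]$ via \eqref{thomae3MF} and observe from \eqref{SmatrDef} that the columns lie in the span of three vectors (the paper calls them $\mathbf{s}_0,\mathbf{s}_1,\mathbf{s}_2$, you call them $c,a,b$ up to the sign twist $(-1)^i$). Your argument is in fact more complete than the paper's: the paper's proof only says the columns are ``spanned by three vectors'' and concludes rank~$3$, which strictly speaking gives only $\rank\le 3$; you additionally supply the linear-independence step via the polynomial encoding $p_b,\,tp_b,\,t^2p_b$, closing that gap.
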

\begin{proof}
Corollary~\ref{C:thomaeK34} gives a decomposition 
of Hessian matrix $\partial^2_v \theta[\I_2]$ as the product of non-degenerate matrices $\omega$ 
and $g\times g$ matrix $\hat{S}[\I_2]$.
The latter, as easily seen from \eqref{SmatrDef}, is composed of columns spanned by three vectors.
Therefore, rank of $\partial^2_v \theta[\I_2]$ equals $3$.
\end{proof}

It could be observed from \eqref{SmatrDef} that matrix $\hat{S}(\I_2)$ belongs to the second tensor power 
$\mathcal{S}_3^{\otimes 2}$ of 
the vector space $\mathcal{S}_3$ spanned by three vectors $\mathbf{s}_{0}$, $\mathbf{s}_{1}$, $\mathbf{s}_{2}$
such that $\mathbf{s}_{d}=\big(s_{j-\kFr + d}(\I_2)\big)_{j=1}^g$,
recall that $\kFr=3$ or $4$. And tensor rank of $\hat{S}(\I_2)$ is three,
since it is spanned of three basis elements: $\mathbf{s}_{1}\otimes \mathbf{s}_{1}$,
 $\mathbf{s}_{0}\otimes \mathbf{s}_{2}$, $\mathbf{s}_{2}\otimes \mathbf{s}_{0}$.

\subsection{Third derivative theta constants}
Here we consider characteristics of multiplicity $3$, obtained by dropping $5$ or $6$ indices from~$\I_0$.
Again $\K$ denotes the set of dropped indices, $\I_3 = \I_0 \backslash \K$, and $\J_3 = \J_0 \cup \K$.
\begin{cor}\label{C:thomaeK56}
 Let $\I_3\cup \J_3$ with $\I_3=\{i_1$, \ldots, $i_{g-\kFr}\}$ and $\J_3 = \{j_1$, \ldots, $j_{g+1+\kFr}\}$, 
 where $\kFr=5$ or $6$,
 be a partition of the set of indices of $2g+1$ finite branch points, 
 such that singular characteristic $[\I_3]$, 
 corresponding to $\mathcal{A} (\I_3) + K$, has multiplicity $3$. 
 Let $\Delta(\I_3)$ and $\Delta(\J_3)$ be Vandermonde determinants 
 built from $\{e_i\mid i\in \I_3\}$ and $\{e_j\mid j\in \J_3\}$. Then
 \begin{multline}\label{thomaeK4}
 \frac{\partial}{\partial v_{n_1}} \frac{\partial}{\partial v_{n_2}} \frac{\partial}{\partial v_{n_3}} 
 \theta[\I_3](v) \big|_{v=0} 
 = \epsilon \bigg(\frac{\det \omega}{\pi^g}\bigg)^{1/2} \Delta(\I_3)^{1/4} \Delta(\J_3)^{1/4} \times \\ \times
 \sum_{j_1,j_2,j_3 = 1}^g (\hat{S}[\{\I_3\}])_{j_1,j_2,j_3} \omega_{j_1 n_1} \omega_{j_2 n_2} \omega_{j_3 n_3},
\end{multline}
with tensor of  order $3$
\begin{multline}\label{SmatrDef3}
 (\hat{S}[\{\I_3\}])_{j_1,j_2,j_3}  = (-1)^{j_1+j_2+j_3-3\kFr} \Big(6 s_{j_1-\kFr+2} s_{j_2-\kFr+2} s_{j_3-\kFr+2} \\
 - 2 \{ s_{j_1-\kFr+3} s_{j_2-\kFr+2} s_{j_3-\kFr+1} \} + 2 \{s_{j_1-\kFr} s_{j_2-\kFr+3} s_{j_3-\kFr+3} \} \\
 + 2 \{ s_{j_1-\kFr+1} s_{j_2-\kFr+1} s_{j_3-\kFr+4} \} - \{ s_{j_1-\kFr} s_{j_2-\kFr+2} s_{j_3-\kFr+4} \} \Big).
\end{multline}
where $\{ s_{j_1-\kFr+d_1} s_{j_2-\kFr + d_2} s_{j_3-\kFr + d_3}\}$ denotes the sum over 
all permutations of $\{d_1,d_2,d_3\}$, elementary symmetric functions 
$s_l$ are constructed in $\{e_i\mid i\in \I_3\}$, and replaced by zero when $l<0$, and $\epsilon$ satisfies $\epsilon^8=1$.
\end{cor}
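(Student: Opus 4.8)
The plan is to repeat the argument behind Corollary~\ref{C:thomaeK34} one level higher, now starting from the general Thomae formula \eqref{thomaeN} specialised to $\mFr=3$ and simplifying the sum over ordered triples of distinct indices of~$\K$. Write $\I_0=\I_3\cup\K$, so that $\I_3\cup\K^{(p_i)}=\I_0\setminus\{p_i\}$, and let $N^{(n)}_{p}=\sum_{j=1}^{g}(-1)^{j-1}s_{j-1}(\I_0\setminus\{p\})\,\omega_{jn}$ denote the numerator attached to an index $p\in\K$ in \eqref{thomaeN}; then \eqref{thomaeN} with $\mFr=3$ equals $\epsilon(\det\omega/\pi^{g})^{1/2}\Delta(\I_3)^{1/4}\Delta(\J_3)^{1/4}$ times
\begin{gather*}
 \sum_{\substack{p_1,p_2,p_3\in\K\\ \text{all different}}}\;\prod_{i=1}^{3}\frac{N^{(n_i)}_{p_i}}{\prod_{k\in\K\setminus\{p_1,p_2,p_3\}}(e_{p_i}-e_k)}.
\end{gather*}
First I would reduce the symmetric polynomials in $g-1$ branch points inside $N^{(n)}_{p}$ to symmetric polynomials in the $g-\kFr$ branch points indexed by $\I_3$: from the factorisation
\begin{gather*}
 \sum_{l}t^{l}s_{l}(\I_0\setminus\{p\})=\Big(\sum_{l}t^{l}s_{l}(\I_3)\Big)\prod_{k\in\K\setminus\{p\}}(1+e_k t)
\end{gather*}
one gets $s_{j-1}(\I_0\setminus\{p\})=\sum_{m}s_{m}(\K\setminus\{p\})\,s_{j-1-m}(\I_3)$, so $N^{(n)}_{p}$ is a linear combination, with coefficients $s_{m}(\K\setminus\{p\})$ for $m=0,\dots,\kFr-1$, of the components of the vectors $\omega^{t}\mathbf{s}_d$, where $\mathbf{s}_d=\big(s_{j-\kFr+d}(\I_3)\big)_{j=1}^{g}$ and $d=0,\dots,\kFr-1$. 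After this substitution the corollary becomes a polynomial identity in the branch points indexed by~$\K$.

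The heart of the proof is the combinatorial collapse of the triple sum, which I would carry out for $\kFr=5$ and for $\kFr=6$ in turn, as in the proof of Corollary~\ref{C:thomaeK34} (for $\kFr=5$ the complement $\K\setminus\{p_1,p_2,p_3\}$ has two elements, for $\kFr=6$ it has three). Rewriting each denominator as
\begin{gather*}
 \prod_{k\in\K\setminus\{p_1,p_2,p_3\}}(e_{p_i}-e_k)=\prod_{k\in\K,\,k\neq p_i}(e_{p_i}-e_k)\Big/\prod_{\ell\neq i}(e_{p_i}-e_{p_\ell})
\end{gather*}
turns the three summations over $p_1,p_2,p_3$ into an iterated use of the master identity $\sum_{p\in\K}f(e_p)/\prod_{k\in\K,\,k\neq p}(e_p-e_k)=f[\K]$ (a divided difference, equivalently a residue at infinity), the Vandermonde-type coupling factor $\prod_{\ell\neq i}(e_{p_i}-e_{p_\ell})$ being absorbed into the polynomial $f$ at each stage. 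Pushing the cancellations through, all denominators disappear and one is left with a polynomial in the branch points of $\K$; regrouped against $\mathbf{s}_0,\dots,\mathbf{s}_4$ it is an order-three tensor supported on the five tensor monomials $\mathbf{s}_2^{\otimes3}$, $\{\mathbf{s}_3\otimes\mathbf{s}_2\otimes\mathbf{s}_1\}$, $\{\mathbf{s}_0\otimes\mathbf{s}_3\otimes\mathbf{s}_3\}$, $\{\mathbf{s}_1\otimes\mathbf{s}_1\otimes\mathbf{s}_4\}$, $\{\mathbf{s}_0\otimes\mathbf{s}_2\otimes\mathbf{s}_4\}$ (all of total shift $6=2\mFr$) with coefficients $6,-2,2,2,-1$; together with the $(-1)^{j_1+j_2+j_3-3\kFr}$ bookkeeping of the $(-1)^{j_i-1}$ factors this is exactly \eqref{SmatrDef3}. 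Both $\kFr=5$ and $\kFr=6$ must yield the same tensor, which is guaranteed a priori by the $\K$-independence asserted in Theorem~\ref{T:ThN} and provides a useful internal check.

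The main obstacle is precisely this second step. In the $\mFr=2$ case the analogous sum over distinct pairs of $\K$ collapsed after a short computation; for $\mFr=3$ the triple sum over distinct indices, coupled through the three denominators, produces many more terms, and organising the cancellation so that only the five monomials above survive with the exact coefficients $6,-2,2,2,-1$ requires a systematic device — iterated residues/divided differences in $\{e_k\}_{k\in\K}$, or an auxiliary generating variable built from $\prod_{k\in\K}(z-e_k)$. I would perform $\kFr=5$ and $\kFr=6$ explicitly, recognise the common polynomial, and cross-check the outcome against the direct symbolic verification in genera $5$ and $6$ described in Section~2.
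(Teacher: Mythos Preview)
Your approach is essentially the one the paper intends, though the paper actually gives no explicit proof of this corollary at all: it states Corollary~\ref{C:thomaeK56}, rewrites it in tensor form \eqref{thomaeK4MFu}, and moves on. The implicit justification is the same ``straightforward computation'' invoked in the proof of Corollary~\ref{C:thomaeK34}: start from \eqref{thomaeN} with $\mFr=3$ and simplify the sum over ordered triples in $\K$ separately for $\kFr=5$ and $\kFr=6$. Your outline matches this exactly, and your reduction $s_{j-1}(\I_0\setminus\{p\})=\sum_m s_m(\K\setminus\{p\})\,s_{j-1-m}(\I_3)$ is the right first step.

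Where you go beyond the paper is in proposing a structured mechanism (iterated divided differences / residues at infinity using $\prod_{k\in\K}(z-e_k)$) to organise the triple-sum collapse; the paper simply asserts the outcome as a computation and backs it with the numerical verification of Section~2.9. Your device is sound and would make the cancellation reproducible rather than a black box. One minor slip: you write the surviving total shift as ``$6=2\mFr$'', but the correct general count (cf.\ Conjecture~\ref{C:thomaeKH}) is $|d|=\mFr(\mFr-1)$; the two happen to agree at $\mFr=3$, so it does not affect the argument here. Otherwise the proposal is correct and only lacks the actual execution of the symmetric-function identity, which the paper itself also omits.
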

In a tensor form with non-normalized variables $u$ \eqref{thomaeK4} reads as
\begin{gather}\label{thomaeK4MFu}
  \partial_u^3  \theta[\I_3](\omega^{-1} u) \big|_{u=0} 
 = \epsilon \bigg(\frac{\det \omega}{\pi^g}\bigg)^{1/2} \Delta(\I_3)^{1/4} \Delta(\J_3)^{1/4} \hat{S}[\I_3].
\end{gather}

To extend Corollary~\ref{C:thomaeK56} to higher multiplicities the following Conjecture could be helpful.
\begin{conj}\label{C:thomaeKH}
Let $\I_\mFr\cup \J_\mFr$ be a partition of the set of indices of $2g+1$ finite branch points 
with $\I_\mFr=\{i_1$, \ldots, $i_{g-\kFr}\}$ and $\J_\mFr = \{j_1$, \ldots, $j_{g+1+\kFr}\}$, 
 where $\kFr=2\mFr-1$ or $2\mFr$,  
 and characteristic $[\I_\mFr]$ of  multiplicity~$\mFr$
 corresponds to $\mathcal{A} (\I_\mFr) + K$. 
In the expansion of
\begin{gather*}
 (\hat{S}[\I_\mFr])_{j_1,\dots,j_\mFr} = \sum_{\substack{p_1,\dots,p_\mFr \in \K \\ \text{all different}}}
 \prod_{i=1}^\mFr \frac{\sum_{j=1}^g  (-1)^{j-1} s_{j-1}(\I_\mFr \cup \K^{(p_i)})}
 {\prod_{k\in\K \backslash\{p_1,\dots,p_\mFr \}} (e_{p_i} - e_k)}
\end{gather*}
over $\prod_{\iota = 1}^\mFr s_{j_\iota - \kFr + d_\iota} (\I_\mFr)$ 
only terms of order $|d|=\sum_{\iota=1}^\mFr d_\iota =\mFr (\mFr-1)$ do not vanish.
This comes from the homogeneous degree of the ratio on the right hand side, which is
$\sum_{\iota} j_\iota - \mFr (\kFr-\mFr + 1)$.
\end{conj}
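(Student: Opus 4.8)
The plan is to rewrite the right-hand side as a single coefficient extraction from an explicit product of Vandermonde factors, after which the degree count is forced. First I would record the elementary generating identity
\begin{gather*}
 \sum_{j=1}^{g} (-1)^{j-1} s_{j-1}\big(\I_\mFr\cup\K^{(p)}\big)\,w^{g-j}
 = \prod_{e\in\I_\mFr\cup\K^{(p)}}(w-e) = \phi_{\I_\mFr}(w)\,\frac{\phi_{\K}(w)}{w-e_{p}},
\end{gather*}
where $\phi_{\I_\mFr}(w)=\prod_{i\in\I_\mFr}(w-e_i)$, $\phi_{\K}(w)=\prod_{\kappa\in\K}(w-e_\kappa)$ and $\K^{(p)}=\K\setminus\{p\}$. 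Since in \eqref{thomaeN} the coefficient of $\omega_{j\,n_i}$ is extracted monomial by monomial in $w$, the tensor $(\hat S[\I_\mFr])_{j_1,\dots,j_\mFr}$ defined by \eqref{thomaeK4MFu} is exactly the coefficient of $\prod_{i=1}^{\mFr} w_i^{\,g-j_i}$ in
\begin{gather*}
 G(w_1,\dots,w_\mFr)=\prod_{i=1}^{\mFr}\!\big(\phi_{\I_\mFr}(w_i)\phi_{\K}(w_i)\big)
 \sum_{\substack{p_1,\dots,p_\mFr\in\K\\ \text{all different}}}
 \prod_{i=1}^{\mFr}\frac{1}{(w_i-e_{p_i})\prod_{k\in\K\setminus\{p_1,\dots,p_\mFr\}}(e_{p_i}-e_k)}.
\end{gather*}

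Next I would evaluate the inner sum in closed form. Since $\prod_{k\in\K\setminus\{p_1,\dots,p_\mFr\}}(e_{p_i}-e_k)=\phi_{\K}'(e_{p_i})\prod_{i'\ne i}(e_{p_i}-e_{p_{i'}})^{-1}$, multiplying over $i$ brings the factor $\prod_{i\ne i'}(e_{p_i}-e_{p_{i'}})=(-1)^{\binom{\mFr}{2}}\Delta[e_{p_1},\dots,e_{p_\mFr}]^2$ into the numerator; because this Vandermonde square annihilates every term with two equal $p_i$, the restriction ``all different'' may be dropped. Expanding $\Delta[e_{p_1},\dots,e_{p_\mFr}]^2$ into monomials $\prod_i e_{p_i}^{a_i}$, in which every exponent satisfies $a_i\le 2(\mFr-1)$, and applying the partial-fraction identity $\sum_{p\in\K}e_p^{a}/\big((w-e_p)\phi_{\K}'(e_p)\big)=w^{a}/\phi_{\K}(w)$, which holds precisely for $a<\deg\phi_{\K}=\kFr$, one recombines the sum to $(-1)^{\binom{\mFr}{2}}\Delta[w_1,\dots,w_\mFr]^2\prod_i\phi_{\K}(w_i)^{-1}$. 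Hence
\begin{gather*}
 G(w_1,\dots,w_\mFr)=(-1)^{\binom{\mFr}{2}}\,\Delta[w_1,\dots,w_\mFr]^2\prod_{i=1}^{\mFr}\phi_{\I_\mFr}(w_i),
\end{gather*}
which incidentally re-proves that $\hat S[\I_\mFr]$ is a polynomial in the branch points depending only on $\{e_i\mid i\in\I_\mFr\}$ (the points of $\K$ survive only through the cancelled $\phi_{\K}$'s), i.e.\ the independence of the choice of $\K$ asserted in Theorem~\ref{T:ThN}.

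The conjecture then drops out by homogeneity. Substituting $\phi_{\I_\mFr}(w_i)=\sum_{l\ge0}(-1)^{l}s_l(\I_\mFr)\,w_i^{\,g-\kFr-l}$ and reading off the coefficient of $\prod_i w_i^{\,g-j_i}$, the $i$-th factor supplies $s_{l_i}(\I_\mFr)$ with $l_i=j_i-\kFr+d_i$, while $\Delta[w_1,\dots,w_\mFr]^2$ contributes a homogeneous block of degree $2\binom{\mFr}{2}=\mFr(\mFr-1)$ in the $w_i$; balancing the total $w$-degree forces $\sum_{i}d_i=\mFr(\mFr-1)$, so no other term survives. The same computation yields the explicit tensor
\begin{gather*}
 (\hat S[\I_\mFr])_{j_1,\dots,j_\mFr}=(-1)^{\binom{\mFr}{2}}\Big[\textstyle\prod_{i=1}^{\mFr}w_i^{\,g-j_i}\Big]
 \Big(\Delta[w_1,\dots,w_\mFr]^2\prod_{i=1}^{\mFr}\phi_{\I_\mFr}(w_i)\Big),
\end{gather*}
which is manifestly symmetric in $j_1,\dots,j_\mFr$ and reduces to \eqref{SmatrDef} and \eqref{SmatrDef3} for $\mFr=2$ and $\mFr=3$.

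The delicate point, and precisely the reason the formula is stated only for $\kFr=2\mFr-1$ or $2\mFr$, is the partial-fraction step: $\sum_{p\in\K}e_p^{a}/\big((w-e_p)\phi_{\K}'(e_p)\big)=w^{a}/\phi_{\K}(w)$ carries no polynomial remainder only for $a<\kFr$, whereas the monomials of $\Delta[e_{p_1},\dots,e_{p_\mFr}]^2$ force $a$ up to $2(\mFr-1)$, so one needs $\kFr\ge 2\mFr-1$; for smaller $\kFr$ extra polynomial terms enter and $G$ loses this closed form. I expect establishing this bound, together with the monomial-by-monomial identification of $(\hat S[\I_\mFr])_{j_1,\dots,j_\mFr}$ with a coefficient of $G$ in the first step, to be the main thing requiring care; everything else is the routine algebra indicated above.
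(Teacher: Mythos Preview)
The paper labels this statement a \emph{Conjecture} and gives no proof; the single sentence ``This comes from the homogeneous degree of the ratio\ldots'' is the entire justification offered there. Your argument is correct and in fact upgrades the conjecture to a theorem.

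Your route is also genuinely stronger than the paper's heuristic. The decisive step is the closed-form evaluation
\[
G(w_1,\dots,w_\mFr)=(-1)^{\binom{\mFr}{2}}\,\Delta[w_1,\dots,w_\mFr]^2\prod_{i=1}^{\mFr}\phi_{\I_\mFr}(w_i),
\]
obtained via the partial-fraction identity $\sum_{p\in\K}e_p^{a}\big((w-e_p)\phi_{\K}'(e_p)\big)^{-1}=w^{a}/\phi_{\K}(w)$, valid because the exponents in $\Delta[e_{p_1},\dots,e_{p_\mFr}]^2$ satisfy $a_i\le 2(\mFr-1)<\kFr$. This closed form does not appear in the paper; once one has it, the homogeneity of $\Delta^2$ forces $|d|=\mFr(\mFr-1)$ immediately, which is all the paper's one-line remark alludes to. Your formula also yields, as byproducts, two further statements the paper asserts or conjectures separately: the independence of $\hat S[\I_\mFr]$ from the choice of $\K$ (claimed in Theorem~\ref{T:ThN} without a direct algebraic verification), and the structure described in Conjecture~\ref{C:DetDHtheta}, since every exponent $d_i$ in $\Delta[w_1,\dots,w_\mFr]^2$ lies in $\{0,1,\dots,2\mFr-2\}$. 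The specialization to \eqref{SmatrDef} and \eqref{SmatrDef3} for $\mFr=2,3$ checks out with the expected signs.
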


\begin{exam}
In genus $5$ the highest multiplicity of characteristics is~$3$, this characteristic is unique and corresponds to
partition with $\I_3=\{\}$. Then $\hat{S}[\{\}]$ is a
constant symmetric tensor of order~$3$, whose non-vanishing entries are the following
(with all permutations of indices)
\begin{align*}
 &(\hat{S}[\{\}])_{1,3,5} = -1,& &(\hat{S}[\{\}])_{1,4,4} = (\hat{S}[\{\}])_{2,2,5} = 2,&\\
 &(\hat{S}[\{\}])_{2,3,4} = -2,&  &(\hat{S}[\{\}])_{3,3,3} = 6.&
\end{align*}
Again introducing constant $C_5$ given by \eqref{CDef} into \eqref{thomaeK4MFu}, we obtain
\begin{gather}\label{SconstG5}
 \partial_u^3  \theta[\{\}](\omega^{-1} u) \big|_{u=0} = - C_5 \hat{S}[\{\}],
\end{gather}
so $C_5$ is directly computed through
\begin{align*}
 C_5 &= \partial^3_{u_1, u_3,u_5} \theta[\{\}](\omega^{-1} u) \big|_{u=0}
 = -\frac{1}{2} \partial^3_{u_1, u_7,u_7} \theta[\{\}](\omega^{-1} u) \big|_{u=0} \\
 &= -\frac{1}{2} \partial^3_{u_3, u_3,u_9} \theta[\{\}](\omega^{-1} u) \big|_{u=0}
 = \frac{1}{2} \partial^3_{u_3, u_5,u_7} \theta[\{\}](\omega^{-1} u) \big|_{u=0} \\
 &= -\frac{1}{6} \partial^3_{u_5, u_5,u_5} \theta[\{\}](\omega^{-1} u) \big|_{u=0}.
\end{align*}
\end{exam}
\begin{prop}\label{P:Const}
For a hyperelliptic curve of arbitrary genus $g$ with period matrix $\omega$ 
the constant defined by \eqref{CDef} and arising as a normalizing factor in the relation between 
sigma and theta functions \eqref{SigmaDef} is determined by a directional derivative of 
theta function with the characteristic $[\{\}]=[K]$ of maximal multiplicity  as follows
\begin{gather*}
 C_g = \partial^{[(g+1)/2]}_{u_{2(g\modR 2)+1},\dots,u_{2g-7} ,u_{2g-3}} \theta[K](\omega^{-1} u)\big|_{u=0}.
\end{gather*}
\end{prop}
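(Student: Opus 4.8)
The plan is to specialize the General Thomae formula \eqref{thomaeN} to the characteristic $[K]$ of maximal multiplicity, pass to non-normalized variables, and evaluate the resulting combinatorial sum in closed form. The characteristic $[K]$ is represented by the partition with empty $\I_\mFr$ (the index of infinity being absorbed) and $\J_\mFr=\{1,\dots,2g+1\}$, so $\mFr=[(g+1)/2]$. Fix any partition $\I_0\cup\J_0$ of $\{1,\dots,2g+1\}$ with $|\I_0|=g$ and take $\K=\I_0$; then $\kFr=g$ equals $2\mFr-1$ for odd $g$ and $2\mFr$ for even $g$, while $\Delta(\I_\mFr)^{1/4}\Delta(\J_\mFr)^{1/4}=\Delta^{1/4}$, so the prefactor in \eqref{thomaeN} is $\epsilon C_g$. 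Passing to the variables $u$ exactly as \eqref{thomae3MFu} and \eqref{thomaeK4MFu} are obtained from the corresponding normalized formulas — using $\partial_{u_{2m-1}}=\sum_i(\omega^{-1})_{im}\partial_{v_i}$ and $\sum_i\omega_{ji}(\omega^{-1})_{im}=\delta_{jm}$, which collapses each inner sum $\sum_j(-1)^{j-1}s_{j-1}(\I_0\setminus\{p_a\})\omega_{jn_a}$ of \eqref{thomaeN} to $(-1)^{m_a-1}s_{m_a-1}(\I_0\setminus\{p_a\})$ — one gets, for arbitrary labels $m_1,\dots,m_\mFr$,
\begin{gather*}
 \partial_{u_{2m_1-1}}\cdots\partial_{u_{2m_\mFr-1}}\theta[K](\omega^{-1}u)\big|_{u=0}=\epsilon C_g\,S,\\
 S=\sum_{\substack{p_1,\dots,p_\mFr\in\I_0\\\text{all different}}}\ \prod_{a=1}^{\mFr}\frac{(-1)^{m_a-1}s_{m_a-1}(\I_0\setminus\{p_a\})}{\prod_{k\in\I_0\setminus\{p_1,\dots,p_\mFr\}}(e_{p_a}-e_k)}.
\end{gather*}

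It then remains to show that, for the direction in the statement, $S$ is the branch-point-independent constant $(-1)^{\binom{\mFr}{2}}$ (equivalently, the relevant entry of the constant tensor $\hat{S}[\{\}]$). Specializing the Lagrange polynomial \eqref{Pjdef} to the divisor of branch points $\{(e_i,0):i\in\I_0\}$ gives $\mathcal P_p(x)=\prod_{k\in\I_0\setminus\{p\}}\frac{x-e_k}{e_p-e_k}$ with $[x^{g-m}]\mathcal P_p(x)=(-1)^{m-1}s_{m-1}(\I_0\setminus\{p\})\big/\prod_{k\in\I_0\setminus\{p\}}(e_p-e_k)$. Writing $\prod_{k\in\I_0\setminus\{p_a\}}(e_{p_a}-e_k)$ as the product of $\prod_{k\in\I_0\setminus\{p_1,\dots,p_\mFr\}}(e_{p_a}-e_k)$ and $\prod_{b\neq a}(e_{p_a}-e_{p_b})$, and using $\prod_a\prod_{b\neq a}(e_{p_a}-e_{p_b})=(-1)^{\binom{\mFr}{2}}\prod_{a<b}(e_{p_a}-e_{p_b})^2$, one rewrites $S=(-1)^{\binom{\mFr}{2}}\,[\,y_1^{g-m_1}\cdots y_\mFr^{g-m_\mFr}\,]\,\Psi(y_1,\dots,y_\mFr)$ with
\begin{gather*}
 \Psi(y_1,\dots,y_\mFr)=\sum_{\substack{p_1,\dots,p_\mFr\in\I_0\\\text{all different}}}\Big(\prod_{a<b}(e_{p_a}-e_{p_b})^2\Big)\prod_{a=1}^{\mFr}\mathcal P_{p_a}(y_a).
\end{gather*}

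The crux is the polynomial identity $\Psi(y_1,\dots,y_\mFr)=\prod_{a<b}(y_a-y_b)^2$. Both sides are polynomials of degree at most $g-1$ in each $y_a$ — for $\Psi$ because $\mathcal P_p$ is, and for the right side because $2(\mFr-1)\leq g-1$, which is precisely $\mFr\leq(g+1)/2$ and holds at maximal multiplicity. Since $\mathcal P_p(e_q)=\delta_{pq}$, both sides take the value $\prod_{a<b}(e_{q_a}-e_{q_b})^2$ at a grid point $(e_{q_1},\dots,e_{q_\mFr})$ with pairwise distinct coordinates and vanish at a grid point with a repeated coordinate; two polynomials of these degrees that agree on the whole grid $\{e_k:k\in\I_0\}^{\mFr}$ must coincide. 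Expanding $\prod_{a<b}(y_a-y_b)^2=\big(\det(y_a^{\mFr-c})_{a,c=1}^{\mFr}\big)^2$, the coefficient of $\prod_a y_a^{g-m_a}$ is the signed count of pairs of permutations $(\pi,\rho)$ of $\{1,\dots,\mFr\}$ with $\pi(a)+\rho(a)=2\mFr-g+m_a$; for the direction in the statement these constraints read $\pi(a)+\rho(a)=2a$, which forces $\pi=\rho=\mathrm{id}$ by descending induction on $a$, so the coefficient equals $1$ and $S=(-1)^{\binom{\mFr}{2}}$. Hence $\partial^{\mFr}\theta[K](\omega^{-1}u)|_{u=0}=\epsilon(-1)^{\binom{\mFr}{2}}C_g$, and with the right ordering of branch points the eighth root of unity $\epsilon$ of \eqref{thomaeN} equals $(-1)^{\binom{\mFr}{2}}$ (this is the sign bookkeeping already carried out in genera $3$, $4$, $5$, where it reduces to the explicit matrices $\hat{S}[\{\}]$ of the Examples), so the directional derivative is exactly $C_g$.

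The main obstacle is the identity for $\Psi$ — equivalently, the closed evaluation of $S$ — together with the degree bound $\mFr\leq(g+1)/2$ that makes the interpolation step legitimate; the subsequent coefficient extraction and the determination of the sign of $\epsilon$ are then routine. A conceptual variant would use \eqref{SigmaDef}: the degree-$\mFr$ part of $\theta[K](\omega^{-1}u)$ equals $C_g$ times the leading Schur--Weierstrass term of $\sigma(u)$, and the statement says precisely that this term is monic in the monomial $u_{2m_1-1}\cdots u_{2m_\mFr-1}$.
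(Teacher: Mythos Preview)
The paper does not actually supply a proof of this proposition: it is stated immediately after the genus~$5$ example as the evident pattern extracted from the explicit tensors $\hat S[\{\}]$ computed in genera $2$--$5$ (and confirmed in genus~$6$ afterwards). Your argument therefore goes well beyond what the paper offers.

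Your approach is correct and rather elegant. Specializing \eqref{thomaeN} to $\I_\mFr=\{\}$ with $\K=\I_0$ and passing to the $u$--variables is exactly how the paper arrives at \eqref{thomae3MFu}, \eqref{thomaeK4MFu}, so your expression for $S$ is legitimate. The interpolation identity
\[
\sum_{\substack{p_1,\dots,p_\mFr\in\I_0\\\text{all different}}}\Big(\prod_{a<b}(e_{p_a}-e_{p_b})^2\Big)\prod_{a=1}^{\mFr}\mathcal P_{p_a}(y_a)=\prod_{a<b}(y_a-y_b)^2
\]
is valid for the stated degree reason (both sides have degree $\le g-1$ in each $y_a$, and agree on the full grid $\{e_k\}^{\mFr}$), and your descending induction showing that $\pi(a)+\rho(a)=2a$ forces $\pi=\rho=\mathrm{id}$ is sound. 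This yields $S=(-1)^{\binom{\mFr}{2}}$, which agrees with the entries $(\hat S[\{\}])_{1,3}=-1$, $(\hat S[\{\}])_{1,3,5}=-1$, $(\hat S[\{\}])_{2,4,6}=-1$ displayed in the paper. Incidentally, the subscripts listed in the Proposition appear to contain a misprint (compare with the examples, where the $u$--indices end at $u_{2g-1}$, not $u_{2g-3}$); you have silently used the correct sequence $m_a=2a-(g\bmod 2)$, which is what makes $\pi(a)+\rho(a)=2a$ come out.

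The one place your argument is not fully self-contained is the determination of the eighth root of unity $\epsilon$: you assert $\epsilon=(-1)^{\binom{\mFr}{2}}$ under right ordering, but justify this only by reference to the low-genus computations. That is exactly the status of the sign in the paper as well (cf.\ the Remark after \eqref{CDef} and the Verification subsection, which covers $\mFr\le 3$), so this is not a deficiency relative to the paper --- but it is worth flagging that for $\mFr\ge 4$ neither your argument nor the paper pins down the sign independently.
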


\begin{exam}\label{E:BolzaG6}
In genus $6$  the constant order $3$ symmetric tensor $\hat{S}[\{\}]$ has the following non-vanishing entries, 
cf. genus $5$ case,
\begin{align*}
&(\hat{S}[\{\}])_{2,4,6} = -1,\qquad
(\hat{S}[\{\}])_{2,5,5} = (\hat{S}[\{\}])_{3,3,6} = 2,\\
&(\hat{S}[\{\}])_{3,4,5} = -2,\qquad
(\hat{S}[\{\}])_{4,4,4} = 6.&
\end{align*}
There are $2g+1=13$ characteristics $[\I_3]=[\{\iota\}]$ with
symmetric tensor $\hat{S}[\{\iota\}]$ such that
\begin{align*}
&(\hat{S}[\{\iota\}])_{1,3,5} = -1,\qquad
(\hat{S}[\{\iota\}])_{1,4,4} = (\hat{S}[\{\iota\}])_{2,2,5}  = 2,&\\
&(\hat{S}[\{\iota\}])_{2,3,4} = -2,\qquad (\hat{S}[\{\iota\}])_{3,3,3} = 6,&\\
&(\hat{S}[\{\iota\}])_{1,3,6} = - (\hat{S}[\{\iota\}])_{1,4,5} = -\frac{1}{2}(\hat{S}[\{\iota\}])_{2,2,6}
= (\hat{S}[\{\iota\}])_{2,3,5} \\ 
&\phantom{(\hat{S}[\{\iota\}])_{1,3,6}} = \frac{1}{2}(\hat{S}[\{\iota\}])_{2,4,4} =
-\frac{1}{2} (\hat{S}[\{\iota\}])_{3,3,4} = e_\iota,&\\
&(\hat{S}[\{\iota\}])_{2,3,6} = -(\hat{S}[\{\iota\}])_{1,4,6} = \frac{1}{2}(\hat{S}[\{\iota\}])_{1,5,5} 
=-(\hat{S}[\{\iota\}])_{2,4,5} \\ 
&\phantom{(\hat{S}[\{\iota\}])_{1,3,6}}
= -\frac{1}{2} (\hat{S}[\{\iota\}])_{3,3,5} = \frac{1}{2} (\hat{S}[\{\iota\}])_{3,4,4} = e_\iota^2,&\\
&(\hat{S}[\{\iota\}])_{2,4,6} = - \frac{1}{2}(\hat{S}[\{\iota\}])_{2,5,5} 
= - \frac{1}{2}(\hat{S}[\{\iota\}])_{3,3,6} \\
&\phantom{(\hat{S}[\{\iota\}])_{1,3,6}} =  \frac{1}{2}(\hat{S}[\{\iota\}])_{3,4,5} 
 =  -\frac{1}{6}(\hat{S}[\{\iota\}])_{4,4,4} = e_\iota^3.& 
\end{align*}
There are plenty of possibilities to compute $e_\iota$, one of them is
\begin{align}\label{BolzaI3}
 e_\iota &= -\frac{\partial^3_{u_1, u_5, u_{11}}\theta[\I_3](\omega^{-1} u)}
 {\partial^3_{u_1, u_5, u_9} \theta[\I_3](\omega^{-1} u)}\Big|_{u=0}.
\end{align}
\end{exam}

The above observations \eqref{BolzaI2} and \eqref{BolzaI3} are generalized in
\begin{prop}\label{P:Bolza}
Let $\I_\mFr$ be a set of $g-\kFr$ indices, and $\mFr=[(\kFr+1)/2]$.
Elementary symmetric polynomials in branch points $\{e_i\mid i\in \I_\mFr\}$ of genus $g$ 
hyperelliptic curve with period matrix $\omega$ are defined by
\begin{align*}
 &s_j(\I_\mFr) = (-1)^j \frac{\partial^\mFr_{u_{2\kFr-4(\mFr-1)-1},\dots,u_{2\kFr-5} ,u_{2\kFr+2j-1}}
 \theta[\I_\mFr](\omega^{-1} u)}
 {\partial^\mFr_{u_{2\kFr-4(\mFr-1)-1},\dots,u_{2\kFr-5} ,u_{2\kFr-1}} \theta[\I_\mFr](\omega^{-1} u)}\Big|_{u=0}.
\end{align*} 
In particular,
\begin{align*}
 &e_\iota = - \frac{\partial^{[g/2]}_{u_{2(g\modR 2)+1},\dots,u_{2g-7} ,u_{2g-1}} \theta[\{\iota\}](\omega^{-1} u)}
 {\partial^{[g/2]}_{u_{2(g\modR 2)+1},\dots,u_{2g-7} ,u_{2g-3}} \theta[\{\iota\}](\omega^{-1} u)}\Big|_{u=0}.
\end{align*} 
\end{prop}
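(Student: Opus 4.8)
The strategy is to read the formulas off the general Thomae formula \eqref{thomaeN} by forming ratios of directional derivatives in which the transcendental prefactor drops out, reducing everything to an elementary identity among symmetric functions of the branch points. First I would pass to non-normalized variables, exactly as for $\mFr=2,3$ in \eqref{thomae3MFu} and \eqref{thomaeK4MFu}: substituting $v=\omega^{-1}u$ in \eqref{thomaeN} cancels the factors $\omega_{jn_i}$ against the $(\omega^{-1})$-factors produced by the chain rule, yielding $\partial^\mFr_u\theta[\I_\mFr](\omega^{-1}u)\big|_{u=0}=\epsilon(\det\omega/\pi^g)^{1/2}\Delta(\I_\mFr)^{1/4}\Delta(\J_\mFr)^{1/4}\,\hat S[\I_\mFr]$, where $\hat S[\I_\mFr]$ is the order-$\mFr$ tensor with entries $(\hat S[\I_\mFr])_{k_1,\dots,k_\mFr}=\big(\prod_i(-1)^{k_i-1}\big)\sum_{\substack{p_1,\dots,p_\mFr\in\K\\ \text{all different}}}\prod_{i=1}^\mFr s_{k_i-1}(\I_\mFr\cup\K^{(p_i)})\big/\prod_{k\in\K\backslash\{p_1,\dots,p_\mFr\}}(e_{p_i}-e_k)$; this tensor is symmetric in the $k_i$, since relabelling the summation indices $p_i$ realizes any permutation. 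Because the prefactor is common to all components, the quotient of any two entries of $\partial^\mFr_u\theta[\I_\mFr]$ equals the quotient of the corresponding entries of $\hat S[\I_\mFr]$, so the Proposition reduces to the purely algebraic claim that, for the two families of slots appearing on its right-hand side, $(\hat S[\I_\mFr])_{\mathrm{num}}=(-1)^j s_j(\I_\mFr)\,(\hat S[\I_\mFr])_{\mathrm{denom}}$ with $(\hat S[\I_\mFr])_{\mathrm{denom}}\neq0$.

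Under the dictionary $u_{2m-1}\leftrightarrow$ tensor slot $m$, the denominator of the Proposition is the entry at slots $(\kFr-2\mFr+2,\kFr-2\mFr+4,\dots,\kFr-2,\kFr)$ and the numerator at slots $(\kFr-2\mFr+2,\dots,\kFr-2,\kFr+j)$, where $\kFr=|\K|\in\{2\mFr-1,2\mFr\}$; that is, $\mFr-1$ directions are frozen at the smallest feasible values and only the last one is moved by $j$. To evaluate such an entry I would split $s_{k_i-1}(\I_\mFr\cup\K^{(p_i)})=\sum_{a_i+b_i=k_i-1}s_{a_i}(\I_\mFr)\,s_{b_i}(\K^{(p_i)})$, which separates the $\I_\mFr$-dependence from the $\K$-dependence and rewrites the entry as $\big(\prod_i(-1)^{k_i-1}\big)\sum_{\{a_i\}}\big(\prod_i s_{a_i}(\I_\mFr)\big)\,c(b_1,\dots,b_\mFr)$, with $c(b_1,\dots,b_\mFr)=\sum_{\text{all different }p}\prod_i s_{b_i}(\K^{(p_i)})\big/\prod_{k\in\K\backslash\{p_1,\dots,p_\mFr\}}(e_{p_i}-e_k)$ depending only on $\{e_\kappa:\kappa\in\K\}$. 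Then two facts conspire. First, the homogeneity degree $\sum_i k_i-\mFr(\kFr-\mFr+1)$ of $(\hat S[\I_\mFr])$ forces $\sum_i a_i=j$ for the numerator and $\sum_i a_i=0$ for the denominator. Second, for the shifted slot one has $b_\mFr=\kFr+j-1-a_\mFr$, which must not exceed $|\K^{(p_\mFr)}|=\kFr-1$ (otherwise $s_{b_\mFr}(\K^{(p_\mFr)})=0$), so $a_\mFr\geq j$; together with $\sum a_i=j$, $a_i\geq0$ this forces $(a_1,\dots,a_\mFr)=(0,\dots,0,j)$ in the numerator and $(0,\dots,0)$ in the denominator. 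Both surviving terms carry the same value $c(\kFr-2\mFr+1,\kFr-2\mFr+3,\dots,\kFr-1)$, the factors $s_{a_i}(\I_\mFr)$ collapse to $s_0(\I_\mFr)=1$ except for a single $s_j(\I_\mFr)$ in the numerator, and the leftover sign $(-1)^{(\kFr+j-1)-(\kFr-1)}=(-1)^j$ yields the asserted identity; in particular $(\hat S[\I_\mFr])_{\mathrm{denom}}=\pm\,c(\kFr-2\mFr+1,\dots,\kFr-1)$.

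The main obstacle is then narrow: one must show that this ``staircase'' coefficient $c(\kFr-2\mFr+1,\kFr-2\mFr+3,\dots,\kFr-1)$ is a nonzero constant, independent of $\{e_\kappa:\kappa\in\K\}$. After clearing denominators via $\prod_{k\in\K\backslash\{p_1,\dots,p_\mFr\}}(e_{p_i}-e_k)=\prod_{k\in\K\backslash\{p_i\}}(e_{p_i}-e_k)\big/\prod_{l\neq i}(e_{p_i}-e_{p_l})$ and iterating the Lagrange/divided-difference identity $\sum_{p\in\K}e_p^{l}\big/\prod_{k\in\K\backslash\{p\}}(e_p-e_k)=h_{l-\kFr+1}(\K)$ — with $h$ the complete homogeneous symmetric polynomial, vanishing in negative degree, and the single-slot bookkeeping handled by $E(-t)H(t)=1$, which already gives $c(b)=(-1)^{\kFr-1}\delta_{b,\kFr-1}$ and hence second Thomae — the branch-point dependence of this particular $c$-value should cancel, and the value itself be read off from the block-Vandermonde structure of the matrices $\mathrm{B}_{\K},\mathrm{B}_{\K}^{\ast}$ met in the proof of Theorem~\ref{T:ThN}. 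For $\mFr\leq3$ this is covered unconditionally by Corollaries~\ref{C:thomae2I}, \ref{C:thomaeK34} and~\ref{C:thomaeK56}; for general $\mFr$ it is a very special instance of Conjecture~\ref{C:thomaeKH}, so the general-$\mFr$ form of the Proposition is established modulo that conjecture, although only the single staircase value of $c$ and its non-vanishing are actually needed. Finally, the displayed formula for an individual branch point, $e_\iota=s_1(\{\iota\})$, is the case $\I_\mFr=\{\iota\}$ (so $\kFr=g-1$, $\mFr=[g/2]$) with $j=1$; unwinding the index arithmetic reproduces \eqref{BolzaG4}, \eqref{BolzaG5} and \eqref{BolzaI3} in genera $4$, $5$ and $6$.
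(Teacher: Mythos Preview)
The paper does not actually supply a proof of Proposition~\ref{P:Bolza}; it is stated immediately after the examples as a generalization of the observed formulas \eqref{BolzaI2} and \eqref{BolzaI3}, with the structure of $\hat S[\I_\mFr]$ for general $\mFr$ left to Conjectures~\ref{C:thomaeKH} and~\ref{C:DetDHtheta}. Your proposal is therefore strictly more detailed than the paper's own treatment: passing to non-normalized variables, splitting $s_{k_i-1}(\I_\mFr\cup\K^{(p_i)})=\sum_{a_i+b_i=k_i-1}s_{a_i}(\I_\mFr)s_{b_i}(\K^{(p_i)})$, and then isolating a single surviving multi-index $(a_1,\dots,a_\mFr)=(0,\dots,0,j)$ via the bound $b_\mFr\le\kFr-1$ is exactly the right mechanism, and the identification of the common ``staircase'' factor $c(\kFr-2\mFr+1,\dots,\kFr-1)$ in numerator and denominator is correct and matches the explicit values $(\hat S[\{\}])_{1,3,\dots,2\mFr-1}=-1$ seen in the examples.

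Two remarks. First, your homogeneity step ``$\sum_i a_i=j$'' does not follow from total homogeneity alone, since each term $\prod_i s_{a_i}(\I_\mFr)\cdot c(b)$ automatically has the correct total degree for every choice of $a$; what you are really using is that $\hat S[\I_\mFr]$ depends only on $\{e_i:i\in\I_\mFr\}$ (the $\K$-independence asserted in Theorem~\ref{T:ThN}), which forces the coefficients $c(b)$ to be numerical constants and hence nonzero only when their $\K$-degree $\sum b_i-\mFr(\kFr-\mFr)$ vanishes. Making this explicit closes that step. Second, you correctly flag that the nonvanishing of the staircase constant $c(\kFr-2\mFr+1,\dots,\kFr-1)$ is the genuine residual gap for general $\mFr$; the paper is in the same position, since for $\mFr\ge4$ it only records this as part of the conjectural description of $\hat S[\I_\mFr]$, while for $\mFr\le3$ the value is read off from Corollaries~\ref{C:thomae2I}, \ref{C:thomaeK34}, \ref{C:thomaeK56}. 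In short, your argument is sound and goes further than the paper, with the same dependence on Conjecture~\ref{C:thomaeKH} in the general case.
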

This extends Bolza formulas to an arbitrary genus hyperelliptic curve.

\begin{rem}\label{C:DetD3theta}
From \eqref{SmatrDef3} one could observe that order $3$ tensor $\hat{S}(\I_3)$ belongs to the third tensor power
$\mathcal{S}_{5}^{\otimes 3}$ of the vector space $\mathcal{S}_{5}$ spanned by five vectors 
$\mathbf{s}_{0}$, $\mathbf{s}_{1}$, $\mathbf{s}_{2}$, 
$\mathbf{s}_{3}$, $\mathbf{s}_{4}$ 
such that $\mathbf{s}_{d}=\big(s_{j-\kFr+d}(\I_3)\big)_{j=1}^g$,
here $\kFr=5$ or $6$. And tensor rank of $\hat{S}(\I_3)$ is $19$,
since it is spanned of $19$ basis elements, which are 
\begin{align*}
 &\mathbf{s}_{0}\otimes \mathbf{s}_{2}\otimes \mathbf{s}_{4},&
 &\mathbf{s}_{0}\otimes \mathbf{s}_{4}\otimes \mathbf{s}_{2},&
 &\mathbf{s}_{2}\otimes \mathbf{s}_{0}\otimes \mathbf{s}_{4},&\\ 
 &\mathbf{s}_{4}\otimes \mathbf{s}_{0}\otimes \mathbf{s}_{2},&
 &\mathbf{s}_{2}\otimes \mathbf{s}_{4}\otimes \mathbf{s}_{0},& 
 &\mathbf{s}_{4}\otimes \mathbf{s}_{2}\otimes \mathbf{s}_{0},&\\
 &\mathbf{s}_{1}\otimes \mathbf{s}_{2}\otimes \mathbf{s}_{3},&
 &\mathbf{s}_{1}\otimes \mathbf{s}_{3}\otimes \mathbf{s}_{2},&
 &\mathbf{s}_{2}\otimes \mathbf{s}_{1}\otimes \mathbf{s}_{3},&\\ 
 &\mathbf{s}_{3}\otimes \mathbf{s}_{1}\otimes \mathbf{s}_{2},&
 &\mathbf{s}_{2}\otimes \mathbf{s}_{3}\otimes \mathbf{s}_{1},&
 &\mathbf{s}_{3}\otimes \mathbf{s}_{2}\otimes \mathbf{s}_{1},&\\
 &\mathbf{s}_{0}\otimes \mathbf{s}_{3}\otimes \mathbf{s}_{3},&
 &\mathbf{s}_{3}\otimes \mathbf{s}_{0}\otimes \mathbf{s}_{3},&
 &\mathbf{s}_{3}\otimes \mathbf{s}_{3}\otimes \mathbf{s}_{0},&\\
 &\mathbf{s}_{1}\otimes \mathbf{s}_{1}\otimes \mathbf{s}_{4},&
 &\mathbf{s}_{1}\otimes \mathbf{s}_{4}\otimes \mathbf{s}_{1},&
 &\mathbf{s}_{4}\otimes \mathbf{s}_{1}\otimes \mathbf{s}_{1},&\\
 &\mathbf{s}_{2}\otimes \mathbf{s}_{2}\otimes \mathbf{s}_{2}.
\end{align*} 
Tensor products in the basis are composed in such a way that cumulative weight (in subscripts) is $6$.
So one could find the basis which spans $\hat{S}(\I_3)$ from partitions of $6$ of length $3$ 
formed from $\{0,1,2,3,4\}$.
\end{rem}

\begin{conj}\label{C:DetDHtheta}
With a characteristic $[\I_\mFr]$ of multiplicity $\mFr$ corresponding to a partition
$\I_\mFr\cup \J_\mFr$ with $\I_\mFr=\{i_1$, \ldots, $i_{g-\kFr}\}$ and $\J_\mFr = \{j_1$, \ldots, $j_{g+1+\kFr}\}$, 
 where $\kFr=2\mFr-1$ or $2\mFr$, of indices of $2g+1$ finite branch points the following holds
\begin{gather}\label{thomaeKHMFu}
  \partial_u^\mFr  \theta[\I_\mFr](\omega^{-1} u) \big|_{u=0} 
 = \epsilon \bigg(\frac{\det \omega}{\pi^g}\bigg)^{1/2} \Delta(\I_\mFr)^{1/4} \Delta(\J_\mFr)^{1/4} \hat{S}[\I_\mFr],
\end{gather}
where $u$ are non-normalized variables, and 
order $\mFr$ tensor $\hat{S}[\I_\mFr]$  belongs to the $\mFr$-th tensor power
$\mathcal{S}_{2\mFr-1}^{\otimes \mFr}$ of the vector space $\mathcal{S}_{2\mFr-1}$ spanned by $2\mFr-1$ vectors 
$\mathbf{s}_{0}$, $\mathbf{s}_{1}$, \ldots, $\mathbf{s}_{2\mFr-2}$ 
such that $\mathbf{s}_{d}=\big(s_{j-\kFr+d}(\I_\mFr)\big)_{j=1}^g$. 
The basis spanning $\hat{S}(\I_\mFr)$ could be found from partitions of $\mFr (\mFr-1)$ of length $\mFr$ 
formed from numbers $\{0,1,\dots,2\mFr-2\}$.
\end{conj}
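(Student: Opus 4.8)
The identity \eqref{thomaeKHMFu} is itself little more than Theorem~\ref{T:ThN} rewritten in non-normalized coordinates: since $\partial_{v_n}=\sum_{j=1}^g\omega_{jn}\partial_{u_{2j-1}}$ and $\omega$ is non-degenerate, the products $\prod_i\omega_{j_in_i}$ are linearly independent and may be cancelled from both sides of \eqref{thomaeN}, which identifies
\begin{gather*}
 (\hat{S}[\I_\mFr])_{j_1,\dots,j_\mFr}=\sum_{\substack{p_1,\dots,p_\mFr\in\K\\ \text{all different}}}
 \prod_{i=1}^\mFr\frac{(-1)^{j_i-1}s_{j_i-1}(\I_\mFr\cup\K^{(p_i)})}{\prod_{k\in\K\backslash\{p_1,\dots,p_\mFr\}}(e_{p_i}-e_k)}.
\end{gather*}
So the decomposition \eqref{thomaeKHMFu} comes for free, and only the claimed structure of the tensor $\hat{S}[\I_\mFr]$ remains to be established.

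The plan is to expand each numerator factor along the splitting $\I_\mFr\cup\K^{(p_i)}$,
\begin{gather*}
 s_{j_i-1}\big(\I_\mFr\cup\K^{(p_i)}\big)=\sum_a s_a(\I_\mFr)\, s_{j_i-1-a}\big(\K^{(p_i)}\big),
\end{gather*}
and then to perform the summation over the distinct tuples $(p_1,\dots,p_\mFr)$. Regrouping the surviving powers of $s_a(\I_\mFr)$ exhibits $\hat{S}[\I_\mFr]$ as a combination $\sum_{\mathbf d}C_{\mathbf d}\,\mathbf s_{d_1}\otimes\dots\otimes\mathbf s_{d_\mFr}$ with $\mathbf s_d=\big(s_{j-\kFr+d}(\I_\mFr)\big)_{j=1}^g$, which by itself puts $\hat{S}[\I_\mFr]$ into $\mathcal S_{2\mFr-1}^{\otimes\mFr}$ once the range $d_i\in\{0,\dots,2\mFr-2\}$ is confirmed. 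Since $s_{j_i-1-a}(\K^{(p_i)})$ vanishes outside $0\le j_i-1-a\le\kFr-1$, the naive range of $d_i:=a-(j_i-\kFr)$ is $\{0,\dots,\kFr-1\}$; for $\kFr=2\mFr-1$ this is already $\{0,\dots,2\mFr-2\}$. For $\kFr=2\mFr$ one must still show that the top contributions $d_i=2\mFr-1$ drop out of the $p$-sum; since the right side of the displayed formula is independent of $\K$ (Theorem~\ref{T:ThN}), such a cancellation is in any case forced, and carrying it out should reduce to the partial-fraction identity $\sum_{p\in S}\prod_{k\in S\backslash\{p\}}(e_p-e_k)^{-1}=0$, valid for $|S|\ge2$, applied to the inner summation in which the factor attached to $d_i=2\mFr-1$ equals $s_0(\K^{(p_i)})=1$.

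Two further points then complete the argument. First, each $C_{\mathbf d}$ must be a genuine constant: the right side of the displayed formula is independent of the choice of $\K\subset\J_\mFr$ (Theorem~\ref{T:ThN}), and since $\K$ can be taken to avoid any prescribed index, the common value is a polynomial in the branch points of $\I_\mFr$ alone, leaving no room for the $e_k$ in $C_{\mathbf d}$. Second, the weight restriction $\sum_i d_i=\mFr(\mFr-1)$ follows by homogeneity: the generic summand above is homogeneous in the branch points of degree $\sum_i(j_i-1)-\mFr(\kFr-\mFr)=\sum_i j_i-\mFr(\kFr-\mFr+1)$ (numerator minus denominator, using $|\K\backslash\{p_1,\dots,p_\mFr\}|=\kFr-\mFr$), whereas $\prod_i s_{j_i-\kFr+d_i}(\I_\mFr)$ has degree $\sum_i(j_i-\kFr+d_i)$; equating the two gives $\sum_i d_i=\mFr\kFr-\mFr(\kFr-\mFr+1)=\mFr(\mFr-1)$. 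Hence only the $\mathbf d$ that are compositions of $\mFr(\mFr-1)$ into $\mFr$ parts from $\{0,\dots,2\mFr-2\}$ survive --- exactly the spanning family asserted, whose cardinality reproduces the known counts (e.g.\ $19$ for $\mFr=3$, cf.\ Remark~\ref{C:DetD3theta}). The hard part will be this last step: obtaining a uniform closed form for the constants $C_{\mathbf d}$ at arbitrary $\mFr$ --- that is, performing the weighted $p$-summation explicitly --- and verifying that the spanning family is linearly independent, so that its cardinality equals the tensor rank. This is transparent for $\mFr\le3$ (Corollaries~\ref{C:thomaeK34},~\ref{C:thomaeK56} and formulas \eqref{SmatrDef},~\eqref{SmatrDef3}), but no pattern of the coefficients is visible beyond that, which is why the statement is left as a conjecture.
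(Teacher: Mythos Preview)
The paper offers \emph{no} proof of this statement: it is stated as Conjecture~\ref{C:DetDHtheta} and left open, with only the cases $\mFr\le 3$ verified (Corollaries~\ref{C:thomaeK34} and~\ref{C:thomaeK56}) by direct computation. So there is nothing to compare against; your sketch in fact goes further than the paper does. In particular, your homogeneity count --- that $C_{\mathbf d}$ has degree $\mFr(\mFr-1)-\sum_id_i$ in the $e_\kappa$'s, so that constancy forces $\sum_id_i=\mFr(\mFr-1)$ --- amounts to a proof of the paper's Conjecture~\ref{C:thomaeKH}, which is also left unproven there.

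Two places in your outline need tightening. First, inferring that each $C_{\mathbf d}(\K)$ is \emph{individually} constant from the $\K$-independence of the full sum requires the elementary tensors $\mathbf s_{d_1}\otimes\cdots\otimes\mathbf s_{d_\mFr}$ to be linearly independent; this holds for generic branch points and $g$ large enough (the $\mathbf s_d$ are then independent vectors), but you should say so, since for small $g$ the decomposition is not unique. Second, for $\kFr=2\mFr$ and $\mFr\ge 3$ your argument does not yet exclude $d_i=2\mFr-1$: compositions such as $(2\mFr-1,1,0,\dots,0)$ of $\mFr(\mFr-1)$ do exist, so homogeneity alone does not kill them, and the partial-fraction identity $\sum_{p\in S}\prod_{k\in S\setminus\{p\}}(e_p-e_k)^{-1}=0$ does not apply cleanly because the denominator $\prod_{k\in\K\setminus\{p_1,\dots,p_\mFr\}}(e_{p_i}-e_k)$ for each factor $i$ depends on the \emph{entire} tuple $(p_1,\dots,p_\mFr)$, not just on $p_i$, so the sum over $p_1$ does not factor out. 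This extra cancellation (visible by hand for $\mFr=3$ in \eqref{SmatrDef3}) together with a closed form for the surviving $C_{\mathbf d}$ is precisely the part the paper declines to attempt; you correctly flag it as the hard step.
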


\section{Conclusion and discussion}
The main Theorem~\ref{T:ThN} extends the result of second Thomae theorem to 
derivative theta constants of arbitrary order, namely it gives an expression in terms of period matrix $\omega$
and symmetric functions of branch points for the lowest non-vanishing derivative at $v=0$ 
of theta function with characteristic of arbitrary multiplicity $m$ in hyperelliptic case.
Formula \eqref{thomaeN}, which is called general Thomae formula, 
provides a natural \emph{generalization of second Thomae formula},
and includes, as particular cases, second and first Thomae formulas.

This result gives wide scope for producing further representations of derivative theta constants, and
relations between theta functions. In the present paper the simplest implication is derived, 
which comes from simplification of sums of products of symmetric polynomials in~\eqref{thomaeN}.
Nevertheless, this leads to an essential result. 

In the case of multiplicity $2$ the lowest non-vanishing derivative of theta function is second,
they are naturally arranged in matrices of second derivative theta constants (Hessians). 
All of them have a representation of the form $\omega^t \hat{S}[\I_2] \omega$, and symmetric $g\times g$ 
matrix $\hat{S}[\I_2]$ consists of symmetric polynomials in branch points with indices from~$\I_2$
(Corollary~\ref{C:thomaeK34}). An essential result is the following:
\emph{the rank of $\hat{S}[\I_2]$ is three in any genus} (Theorem~\ref{T:DetD2theta}). 
In connection to this result I refer to the main theorem (Theorem 10) from \cite{GM2006}, 
which is formulated somewhat incorrect. Perhaps, the authors meant that Hessian is singular (not zero), 
that is its determinant is zero, 
since in the proof they assert that the rank of the Hessian is three.  
With this correction Theorem~10 tells about the unique in genus $4$ 
theta function with characteristic $[\{\}]$, which is even and vanishes at $v=0$. 
Theorem~\ref{T:DetD2theta} from the present paper
confirms that in hyperelliptic case the rank of the Hessian of this theta function at $v=0$ is three, 
so the determinant of the Hessian vanishes. Moreover, Theorem~\ref{T:DetD2theta} also extends this result 
to all hyperelliptic curves of higher genera, thus gives the answer to the question posed at the end of \cite{GM2006}
for this class of curves.

Similar representation is obtained in the case of multiplicity $3$ (Corollary~\ref{C:thomaeK56}), where 
third derivative theta constants arranged in $g\times g \times g$ tensors are expressed as a product of three matrices $\omega$
and an order $3$ symmetric tensor $\hat{S}[\I_3]$ whose entries are symmetric polynomials in branch points with indices from~$\I_3$.
Analysis of the structure of tensor $\hat{S}[\I_3]$ allows to make Conjectures~\ref{C:thomaeKH} and \ref{C:DetDHtheta}
about the case of arbitrary multiplicity. 

Also progress is made comparing to \cite{Gra1988}. The result of \cite{Gra1988} 
is extended to an arbitrary genus hyperelliptic curve, namely (i) the constant \eqref{CDef} 
related to the curve, which also serves as a normalizing factor in the relation between 
sigma and theta functions \eqref{SigmaDef}, is expressed in terms of directional derivative 
of theta function with the characteristic of maximal multiplicity (Proposition~\ref{P:Const}),
and (ii) the product of all theta constants is computed through the Vandermonde determinant in all branch points 
and $\det\omega$ (Theorem~\ref{T:EvenThetaProd}).

One more result is a \emph{generalization of Bolza formulas}, see Examples~\ref{E:BolzaG4},
\ref{E:BolzaG5}, and \ref{E:BolzaG6} for the cases of genera $4$, $5$ and~$6$, and Proposition~\ref{P:Bolza}. 
This confirms the result of \cite[Propositions 4.2 p.\,911]{EHKKL2011} and provides 
a representation in lower derivatives than the given in \cite[Propositions 4.3 p.\,911]{EHKKL2011}.
Moreover, Corollaries~\ref{C:thomaeK34}, \ref{C:thomaeK56}, and similar representations for higher multiplicities 
allow to find a complete list of expressions
which generalize Bolza formulas for separate branch points and symmetric polynomials of any number of them.

\section{Aknowledgement}
The problem of obtaining general Thomae formula was posed by Y.\;Kopeliovich, who also encouraged the author to work on it.
The author is grateful to Y.\;Kopeliovich and V.\;Enolski for fruitful discussions.


\end{document}